\begin{document}
\newtheorem{theo}{Theorem}[section]
\newtheorem{defi}[theo]{Definition}
\newtheorem{lemm}[theo]{Lemma}
\newtheorem{prop}[theo]{Proposition}
\newtheorem{rem}[theo]{Remark}
\newtheorem{exam}[theo]{Example}
\newtheorem{cor}[theo]{Corollary}
\def\Z{\mathbb{Z}} 
\def\R{\mathcal{R}} 
\def\I{\mathcal{I}} 
\def\C{\mathbb{C}}   
\def\N{\mathbb{N}}   
\def\PP{\mathbb{P}}   
\def\Q{\mathbb{Q}}   
\def\L{\mathcal{L}}   
\def\ol{\overline}   
\def\bs{\backslash}  
\def\part{P} 

\newcommand{\gcD}{\mathrm {\ gcd}}  
\newcommand{\End}{\mathrm {End}}  
\newcommand{\Aut}{\mathrm {Aut}}  
\newcommand{\GL}{\mathrm {GL}}  
\newcommand{\SL}{\mathrm {SL}}  
\newcommand{\PSL}{\mathrm {PSL}}  
\newcommand{\Mat}{\mathrm {Mat}}  

\newcommand\ga[1]{\overline{\Gamma}_0(#1)}      
\newcommand\pro[1]{\mathbb{P}^1(\mathbb{Z}_{#1})}  
\newcommand\Zn[1]{\mathbb{Z}_{#1}}
\newcommand\equi[1]{\stackrel{#1}{\equiv}}
\newcommand\pai[2]{[#1:#2]}    
\newcommand\modulo[2]{[#1]_#2} 
\newcommand\sah[1]{\lceil#1\rceil}       
\def\sol{\phi} 
\begin{center}
{\LARGE\bf Singular measures of circle homeomorphisms with two breakpoints}
\footnote{ MSC: 37E10, 37C15, 37C40


Keywords: Circle homeomorphism, break points, rotation number, invariant measures
} \\
\vspace{.25in} {\large {\sc Akhtam Dzhalilov\footnote{Faculty of Mathematics, Samarkand State University, Boulevard st. 15, 703004 Samarkand, Uzbekistan. \quad E-mail: a\_dzhalilov@yahoo.com},
Isabelle Liousse \footnote{Laboratoire Paul Painlev\'{e}, Universit\'{e} Lille I, F-59655 Villeneuve d'Ascq, France. 
\quad E-mail: isabelle.liousse@univ-lille1.fr},
Dieter Mayer\footnote{Institut f\"ur Theoretische Physik, TU Clausthal, D-38678 Clausthal-Zellerfeld, Germany. 
\quad E-mail: dieter.mayer@tu-clausthal.de}}}

\end{center}


\title{Singular measures of circle homeomorphisms\\
with two break points}

\begin{abstract}
Let $T_{f}$ be a circle homeomorphism with two break points $a_{b},c_{b}$ and 
irrational rotation number $\varrho_{f}$.
Suppose that the derivative $Df$ of  its lift $f$ is absolutely continuous 
on every connected interval of the set  $S^{1}\backslash\{a_{b},c_{b}\}$, that
$DlogDf \in L^{1}$ and the product of the jump ratios of $ Df $ 
at the break points is nontrivial, i.e. 
$\frac{Df_{-}(a_{b})}{Df_{+}(a_{b})}\frac{Df_{-}(c_{b})}{Df_{+}(c_{b})}\neq1$.
We prove that the unique $T_{f}$- invariant probability measure $\mu_{f}$ is then singular with respect to Lebesgue measure $l$ on $S^{1}$.
\end{abstract}

\section{Introduction}
Circle homeomorphisms constitute one important class of one-dimentional dynamical systems.The investigation of  their properties was initiated by Poincar\'{e} \cite{Po1885}, who came across them in his studies of differential equations more than a century ago. Since then interest in these maps never diminished. Circle maps are also important because of their applications to natural sciences (see for instance \cite{Cv1989}).
Let $T_{f}$ be an orientation preserving circle homeomorphism with lift $f:\mathbb{R}\rightarrow \mathbb{R}$, $f$  continuous, strictly increasing and $f(x+1)=f(x)+1$, $x\in \mathbb{R}$.
We identify the unit circle  $S^{1}=\mathbb{R}/ \mathbb{Z}$ with the half open interval $[0,1)$.
 The circle homeomorphism $T_{f}$ is then defined by $T_{f}x=f(x)$ mod 1, $x\in S^{1}$.
An important conjugacy invariant characteristic of orientation preserving homeomorphisms is the rotation number $\varrho(f)$. 
If $T_{f}$ is  a circle homeomorphism with lift $f$, then the rotation number $\varrho=\varrho(f)$ is defined by
$$\rho(f)=\left( \overset{}{\underset{n\rightarrow \infty}{\lim
}}\frac{f^{n}(x)}{n}\right)  \text{mod}\ 1,$$
with $f^{n}$ the $n$-th  iterate  of $f$.
This limit exists and is independent of the choice of the lift and the point $x\in \mathbb{R}$. 
If $\varrho$ is irrational, then for sufficiently smooth diffeomorphisms the trajectory of an arbitrary point is dense on the circle, and the diffeomorphism itself can be reduced to the pure rotation $T_{\varrho}x=(x+\varrho)  $ mod 1
by an angle $\varrho$ through a change of coordinates. This  result was proved by Denjoy \cite{De1932}.
More precisely, Denjoy proved that if  $f\in C^{1}(R^{1})$ and $var(\text{log}Df)<\infty$,
then there exists a circle homeomorphism  $T_{\varphi}$ such that 
\begin{equation}\label{eq1}
T_{f}\circ T_{\varphi}=T_{\varphi}\circ T_{\varrho}.
\end{equation}
It is a well known fact that a circle homeomorhism $T_{f}$ with irrational rotation number $\varrho$ is 
strictly ergodic i.e. admits an unique $T_{f}$-invariant probability measure $\mu_{f}$. Note, that the conjugating map $T_{\varphi}$ and the invariant measure $\mu_{f}$ are related by $T_{\varphi}x=\mu_{f}([0,x])$ (see \cite{CFS1982}).
This last relation implies that regularity properties of the conjugating map  $T_{\varphi}$ are closely related to the  existence of an absolutely continuous invariant measure $\mu_{f}$ with a regular density.

The problem of smoothness of the conjugacy of smooth diffeomorphisms is now very well understood(see for instance \cite{Ar1961, Mo1966, He1979, KO1989, KS1989, Yo1984}).

An important result is the one by M. Herman \cite{He1979}:

\begin{theo}\label{Theorem 1.} If $T_{f}$ is a $C^{2}$-diffeomorphism with rotation number $\varrho=\varrho(f)$ 
of bounded type (that means the entries in the continued fraction expansion of $\varrho$ are bounded)
and $T_{f}$ is close to $T_{\varrho}$ then $\mu_{f}$ is absolutely continuous with respect to Lebesgue measure.
\end{theo}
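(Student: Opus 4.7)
The plan is to show that under the hypotheses $T_f$ is $C^1$-conjugate to the rigid rotation $T_\varrho$; once this is established, the identity $T_\varphi(x) = \mu_f([0,x])$ recalled in the introduction gives $d\mu_f = DT_\varphi\, dl$ with a continuous positive density, so $\mu_f$ is absolutely continuous with respect to Lebesgue measure. The whole problem thus reduces to proving that the topological conjugacy $T_\varphi$ (which exists by Denjoy's theorem, since $f\in C^2$ and hence $\mathrm{var}(\log Df)<\infty$) is actually a $C^1$-diffeomorphism.

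First I would exploit the combinatorial structure of the orbit through the dynamical partitions $\xi_n$ of $S^1$ generated by the first $q_n+q_{n-1}$ iterates of a base point, where $q_n$ are the denominators of the continued fraction expansion of $\varrho$. The Denjoy--Koksma inequality applied to $\phi=\log Df$ (BV since $f\in C^2$) yields
\[
\Bigl| S_{q_n}\log Df(x) - q_n \int_{S^1} \log Df\, d\mu_f \Bigr| \leq \mathrm{var}(\log Df),
\]
and since $\int \log Df\, d\mu_f = 0$ for any $C^1$ circle diffeomorphism preserving $\mu_f$, one concludes $|\log Df^{q_n}(x)|\leq V$ uniformly in $n$ and $x$. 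Combined with the bounded-type hypothesis $a_n\leq M$, this distortion bound forces all atoms of $\xi_n$ lying in the same Rohlin tower to have comparable lengths, with a constant depending only on $M$ and $V$; in other words the partitions $\xi_n$ are uniformly regular.

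The decisive step, and the main technical obstacle, is to upgrade these $C^0$-type bounds to $C^1$-convergence of $T_\varphi$. For this I would invoke Herman's renormalization scheme: rescale $f$ restricted to the atoms of $\xi_n$ to a pair of commuting diffeomorphisms $(F_n,G_n)$ of a fixed interval, and show that $(F_n,G_n)$ converges in the $C^{1+\alpha}$ topology to the pair of rotations associated with the Gauss shift of $\varrho$. Two ingredients are needed: the a priori bounds just obtained, which keep the renormalized pairs in a precompact set; and the closeness hypothesis $T_f\approx T_\varrho$, which places the initial data in a neighborhood of the fixed points of the renormalization operator where contraction is available. Boundedness of $(a_n)$ is essential throughout, because each renormalization step composes at most $M$ times, so uniform $C^2$ bounds on the iterated maps are preserved.

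From $C^1$-convergence of the renormalizations, a telescoping argument comparing $T_\varphi$ on successive levels of $\xi_n$ shows that $T_\varphi$ itself is $C^1$ with derivative bounded away from $0$ and $\infty$; this derivative is the density of $\mu_f$ with respect to $l$, which finishes the proof. The hardest part is the quantitative contraction estimate for the renormalization operator on the space of commuting $C^2$ pairs, and it is precisely here that Herman's delicate cross-ratio and Schwarzian-derivative techniques are indispensable; the rest of the argument, once those estimates are granted, is essentially routine book-keeping on dynamical partitions.
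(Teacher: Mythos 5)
This theorem is stated in the paper as background, attributed to Herman~\cite{He1979}; the paper gives no proof of it, so there is no internal proof to compare against. What I can do is assess your sketch on its own merits.

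The broad strategy you outline---Denjoy's inequality (or Denjoy--Koksma applied to $\log Df$, using $\int \log Df\,d\mu_f = 0$) to get uniform bounds $|\log Df^{q_n}|\le V$, bounded geometry of the dynamical partitions from the bounded-type hypothesis, and then a renormalization argument---is indeed the Herman/Yoccoz approach, and the role you assign to the closeness hypothesis (placing the map in a basin where renormalization contracts) is the right intuition for why this is a local theorem. Two reservations, though. First, you set out to prove that $T_\varphi$ is a $C^1$-diffeomorphism, but the theorem only asserts absolute continuity of $\mu_f$, i.e.\ that $T_\varphi$ is absolutely continuous. Under the mere $C^2$ hypothesis this distinction matters: the Khanin--Sinai result (Theorem~\ref{KS}) needs $C^{2+\varepsilon}$ to get a $C^{1+\varepsilon-\delta}$ conjugacy, and the Katznelson--Ornstein route (Theorem~\ref{KO}) deliberately targets absolute continuity of the conjugating map rather than its differentiability. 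Aiming at $C^1$ convergence of renormalizations in the borderline $C^2$ class may simply be unachievable, so the argument should instead establish $L^1$-control of $Df^{q_n}$ (or of the renormalized pair) sufficient for absolute continuity, not $C^0$-convergence of derivatives. Second, the decisive contraction estimate for the renormalization operator is entirely deferred to ``Herman's delicate cross-ratio and Schwarzian-derivative techniques''; since that estimate is the whole content of the theorem, the sketch as written identifies the right framework but does not yet constitute a proof. As a road map it is reasonable; as an argument it has a large unproved core, and its stated target is stronger than both what is claimed and what is likely provable from $C^2$ alone.
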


Katznelson-Ornstein \cite{KO1989} and Khanin-Sinai \cite{KS1989} gave new proofs and an improved global version of this theorem 
in showing that it is not necessary to assume that $T_{f}$ is close to $T_{\varrho}$:

\begin{theo}(Katznelson-Ornstein)\label{KO}. Let $T_{f}$ be an orientation
preserving $C^{1}$-circle diffeomorphism. If $f$ is absolutely
continuous, $D(logDf)\in L^{p}$ for some $p>1$ and the 
rotation number $\rho=\rho(f)$ is of bounded type, 
 then the invariant measure $\mu(f)$ is absolutely continuous
with respect to Lebesgue measure.
\end{theo}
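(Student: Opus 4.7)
The plan is to follow the classical renormalization strategy of Katznelson and Ornstein. Let $\rho = [a_1, a_2, \ldots]$ have convergents $p_n/q_n$; the bounded-type hypothesis provides a constant $M$ with $a_n \le M$, hence $q_{n+1} \le (M+1)\,q_n$ for all $n$. Fix $x_0 \in S^1$ and build the dynamical partition $\mathcal{P}_n$ whose atoms are the intervals of $S^1$ delimited by $\{T_f^i x_0 : 0 \le i \le q_n + q_{n-1}\}$. The atoms fall into two $T_f$-orbit classes (``long'' and ``short''), and the mesh of $\mathcal{P}_n$ tends to $0$ by Denjoy's theorem.

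The heart of the argument is a uniform distortion estimate: there exists a constant $K$ such that for every $n$, every $0 \le k \le q_n$, and every pair $x, y$ lying in the same atom of $\mathcal{P}_n$,
\begin{equation*}
\bigl|\log Df^{k}(x) - \log Df^{k}(y)\bigr| \le K.
\end{equation*}
Telescoping yields
\begin{equation*}
\log Df^{k}(x) - \log Df^{k}(y) = \sum_{i=0}^{k-1} \int_{T_f^i y}^{T_f^i x} D(\log Df)(u)\, du,
\end{equation*}
where absolute continuity of $Df$ legitimates writing each summand as an integral of $D(\log Df)$. The key combinatorial fact (a Denjoy-type argument) is that the translates $[T_f^i y, T_f^i x]$, $i = 0, \ldots, q_n - 1$, are pairwise disjoint, so their union $U$ satisfies $l(U) \le 1$. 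H\"older's inequality with conjugate exponent $p' = p/(p-1)$ then yields
\begin{equation*}
\sum_{i=0}^{k-1} \int_{T_f^i y}^{T_f^i x} |D(\log Df)|\, du \le \int_{U} |D(\log Df)|\, du \le \|D(\log Df)\|_p \cdot l(U)^{1/p'} \le \|D(\log Df)\|_p,
\end{equation*}
so we may take $K = \|D(\log Df)\|_p$.

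Bounded distortion propagates to the geometry of $\mathcal{P}_n$. By $T_f$-invariance, $\mu_f$ takes only two values on $\mathcal{P}_n$ (one per orbit class), and counting atoms (using bounded type to keep their number of order $q_n$) shows that both values are comparable to $1/q_n$. On the Lebesgue side, bounded distortion forces atoms of the same orbit class to have comparable lengths, and the bounded-type condition bridges the two classes; summing lengths to $1$ yields $l(I) \asymp 1/q_n$ for every $I \in \mathcal{P}_n$. Hence there is a constant $C > 0$ with
\begin{equation*}
\mu_f(I) \le C\, l(I) \quad \text{for every } I \in \mathcal{P}_n \text{ and every } n.
\end{equation*}
Since $\bigcup_n \mathcal{P}_n$ generates the Borel $\sigma$-algebra and its mesh tends to $0$, the inequality extends to all Borel sets by outer-regularity; we conclude $\mu_f \le C\, l$, and in particular $\mu_f \ll l$.

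The central difficulty lies in the distortion estimate: the classical Denjoy--Koksma argument requires $\log Df$ to have bounded variation, which we do not have. The bridging device is H\"older's inequality, which trades the unavailable pointwise control of $D(\log Df)$ for a global bound on $l(U)$ coming from the disjointness of orbit translates. The hypothesis $p > 1$ is essential: it yields $p' < \infty$ and hence a useful $L^{p'}$-control of $\mathbf{1}_U$, whereas $p = 1$ would require the stronger assumption $D(\log Df) \in L^\infty$ to close the estimate.
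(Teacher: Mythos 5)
This theorem is quoted in the paper as background from Katznelson--Ornstein [KO1989]; the paper contains no proof of it, so your proposal can only be judged on its own merits. Unfortunately it contains a fatal gap.

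The distortion estimate itself is fine, but notice that your H\"older step only uses $l(U)\le 1$, so it proves nothing beyond $\int_U|D(\log Df)|\le\|D(\log Df)\|_{L^1}$; this is just the classical Denjoy inequality for maps with $\log Df$ of bounded variation, and the hypothesis $p>1$ does no work at all (contrary to your closing remark). The real problem is the step ``bounded distortion forces atoms of the same orbit class to have comparable lengths.'' The distortion estimate compares $Df^{k}(x)$ with $Df^{k}(y)$ for two nearby points and the \emph{same} $k$. To compare two atoms $T_f^{i}I$ and $T_f^{j}I$ of the same class you must control $Df^{\,j-i}(\xi)$ at a single point for an \emph{intermediate} iterate $j-i$, and the Denjoy inequality gives no such control: it bounds only the full return $Df^{q_n}$, i.e.\ it makes $I$ and $T_f^{q_n}I$ comparable (Corollary 2.7 of the paper), while $l(T_f^{m}I)/l(I)$ for $0<m<q_n$ can be arbitrarily large or small. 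Consequently the conclusion $l(I)\asymp 1/q_n$ for all atoms does not follow, and the final inequality $\mu_f(I)\le C\,l(I)$ collapses.

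That this chain of implications cannot be repaired within your framework is shown by the very theorems of the paper you are reading: a PL-homeomorphism with two break points on distinct orbits and bounded-type rotation number (Herman's theorem, and Theorems 1.7--1.9) satisfies every distortion bound you invoke --- $\log Df$ has bounded variation, the Denjoy inequality holds, $x,y$ $q_n$-close gives $e^{-v}\le Df^{l}(x)/Df^{l}(y)\le e^{v}$ --- yet its invariant measure is \emph{singular}. Your argument, applied verbatim to such a map, would ``prove'' absolute continuity. The genuine Katznelson--Ornstein proof is of a different nature: it uses $p>1$ essentially to extract a decaying factor of order $(\mathrm{mesh}\,\mathcal{P}_n)^{1/p'}$ from the H\"older estimate, proving that $Df^{q_n}\to 1$ in an appropriate sense, and then converts this asymptotic rigidity (together with bounded type) into absolute continuity; no soft ``bounded distortion plus counting'' argument can suffice.
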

The  result proved by Khanin and Sinai in \cite{KS1989} is the following:
\begin{theo}(Khanin-Sinai)\label{KS}. Let $T_{f}$ be a $ C^{2+\varepsilon}$ circle diffeomorphism  with $\varepsilon>0$,
and let the rotation number $\rho=\rho(f)$ be a Diophantine number
with exponent $\delta\in(0,\varepsilon)$, i.e., there is a constant $c(\varrho)$
such that
$$
|\rho-\frac{p}{q}|\geq\frac{c(\rho)}{q^{2+\delta}}\ \text{for any}\ \frac{p}{q}\in\mathbb{Q}.
$$
Then the conjugating map $T_{\varphi}$
belongs to $C^{1+\varepsilon-\delta}$. 
\end{theo}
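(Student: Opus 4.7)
The plan is to exploit the combinatorial structure of orbits of $f$ via dynamical partitions, together with bounded-geometry and cross-ratio distortion estimates flowing from the $C^{2+\varepsilon}$-smoothness. For each $n$, the orbit $\{f^{k}x_{0}:0\le k<q_{n}+q_{n-1}\}$ of a reference point $x_{0}\in S^{1}$ generates the dynamical partition $\xi_{n}$ of $S^{1}$, and the analogous orbit of the pure rotation $T_{\varrho}$ generates a partition $\eta_{n}$. Since $T_{\varphi}\circ T_{\varrho}=T_{f}\circ T_{\varphi}$ by (\ref{eq1}), the map $T_{\varphi}$ sends atoms of $\eta_{n}$ bijectively onto atoms of $\xi_{n}$, so controlling the regularity of $T_{\varphi}$ essentially reduces to comparing the lengths of corresponding atoms.

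A first step, already available from $C^{2}$-smoothness together with Denjoy's theorem, is \emph{bounded geometry} of $\xi_{n}$: the ratios of lengths of neighbouring atoms of $\xi_{n}$ are bounded uniformly in $n$, a Denjoy--Finzi-type consequence of $V(\log Df)<\infty$. The core of the proof is a refined distortion estimate for $f^{q_{n}}$: using the $C^{1+\varepsilon}$-regularity of $\log Df$, one bounds $|\log Df^{q_{n}}(x)-\log Df^{q_{n}}(y)|$ for $x,y$ in a common atom by a sum of the form $\sum_{k=0}^{q_{n}-1}|f^{k}(I)|^{\varepsilon}$ over appropriate intervals $I$ obtained from the Koebe/cross-ratio distortion principle. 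The Diophantine hypothesis on $\varrho$ controls the growth of the continued-fraction denominators, and hence of this sum, yielding the uniform H\"older estimate $|\log Df^{q_{n}}(x)-\log Df^{q_{n}}(y)|\le C|x-y|^{\varepsilon-\delta}$ for all $n$ and all $x,y$ in the same atom.

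The last step is to transfer this estimate to the regularity of $T_{\varphi}$. Since $T_{\varphi}(x)=\mu_{f}([0,x])$, the derivative $T_{\varphi}'(x)$ coincides with the density $d\mu_{f}/dl$, and the invariance of $\mu_{f}$ gives $T_{\varphi}'(x)=Df^{q_{n}}(y)\cdot T_{\varphi}'(f^{q_{n}}y)$ for a suitable $q_{n}$-preimage $y$ of $x$; iterating and comparing such expressions at nearby points, the H\"older bound on $\log Df^{q_{n}}$ translates directly into an $(\varepsilon-\delta)$-H\"older modulus of continuity for $\log T_{\varphi}'$, hence $T_{\varphi}\in C^{1+\varepsilon-\delta}$. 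The main obstacle is the cross-ratio distortion estimate itself: it requires an inductive propagation of the local $\varepsilon$-smoothness of $\log Df$ through $q_{n}$ iterates without exponential blow-up, which is possible only because the near-rotation character of $T_{f}$ confines the dangerous close returns to iterates controlled by the continued-fraction denominators. The exact exponent $\varepsilon-\delta$ arises as the precise balance between the smoothness gain $\varepsilon$ available at each good step and the Diophantine cost $\delta$ paid at the close returns.
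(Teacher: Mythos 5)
The theorem you are proving is stated in the paper's introduction as background and is cited from Khanin--Sinai \cite{KS1989} without proof, so there is no in-paper argument to compare against; I can only assess your sketch on its own merits.

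Your outline has the right silhouette (dynamical partitions, comparison of atoms under $T_{\varphi}$, telescoping of $\log Df^{q_n}$, a Diophantine input), but there are two genuine gaps. The first and more serious one is the claimed convergence of the error sum. Writing $\log Df^{q_n}(x)-\log Df^{q_n}(y)=\sum_{k=0}^{q_n-1}\bigl(\log Df(f^{k}x)-\log Df(f^{k}y)\bigr)$ and estimating each term via the $\varepsilon$-H\"older modulus of $D\log Df$ produces a bound of the form $\sum_{k=0}^{q_n-1}|f^{k}I|^{\varepsilon}$. Disjointness of the $f^{k}I$ gives only $\sum_k|f^kI|\le 1$, and for $q_n$ disjoint intervals of comparable size that yields $\sum_k|f^kI|^{\varepsilon}\sim q_n^{1-\varepsilon}$, which \emph{diverges}. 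The Diophantine hypothesis alone does not tame this: what is actually needed is the bootstrap at the heart of Khanin--Sinai, in which one proves inductively an \emph{exponential} decay $\max_x|\log Df^{q_m}(x)|\le C\lambda^m$ (refining the Denjoy bound $e^{-v}\le Df^{q_m}\le e^{v}$), uses this to get sharper control on the lengths $|f^kI|$, and only then closes the estimate at level $n$. Your sketch treats ``the sum is controlled by the Diophantine condition'' as a single step, which is where the argument would fail.

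The second gap is circularity in the transfer step. You differentiate the conjugacy relation to obtain $T_{\varphi}'(x)=Df^{q_n}(y)\,T_{\varphi}'(f^{q_n}y)$, but differentiability of $T_{\varphi}$ is exactly what the theorem asserts; a priori $T_{\varphi}$ is only a homeomorphism. The correct route is to first construct the density $h=d\mu_f/dl$ as the uniform limit of finite products (or equivalently to show that the normalized lengths of atoms of $\xi_n$ converge with a summable error using the exponential decay of $\log Df^{q_n}$), conclude $h$ is continuous and bounded away from $0$ and $\infty$, and only then identify $T_{\varphi}'=h\circ T_{\varphi}^{-1}$-type formulas and read off the $(\varepsilon-\delta)$-H\"older modulus from the geometric rate of decay of the partitions versus the rate $\lambda$. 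As written, your final step assumes what must be proved. Filling both gaps would essentially reconstruct the actual Khanin--Sinai argument, whose crux is precisely the inductive exponential estimate you have skipped.
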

Note, that the condition
$T_{f}\in C^{2+\varepsilon}$ is sharp, because there is a set of full Lebesgue measure in $[0,1]$ such that for any rotation number in
this set there are  $C^{2}$-diffeomorphisms for which the
conjugating map $T_{\varphi}$  is singular \cite{HS1982}.

An important and interesting class of circle homeomorphisms are homeomorphisms with singularities. The simplest among
them are critical circle homeomorphisms and homeomorphisms with break points. We call this latter class {\bf{$P$-homeomorphisms}}. In general their ergodic properties like the invariant measures , their renormalization and also their rigidity properties  are different from the properties of diffeomorphisms (see \cite{dMvS1993} chapter I and IV,  \cite{He1979} chapter VI, \cite{KhKm2003}, \cite{dFdM1999}).

The invariant measures of critical circle homeomorphisms, that means $C^{3}-$ smooth circle homeomorphisms with a finite number of critical points of polynomial type have been studied in (\cite{GrSw1993}):

\begin{theo} (Graczik-Swiatek)\label{GS}. Critical circle homeomorphisms with irrational rotation number 
have an invariant measure singular with respect to Lebesgue measure.
\end{theo}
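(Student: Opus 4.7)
The plan is to prove the theorem by contradiction: suppose the $T_{f}$-invariant probability measure $\mu_{f}$ were absolutely continuous with density $h \in L^{1}(S^{1})$, $h \geq 0$, $\int h \, dl = 1$. Invariance $T_{f\ast}\mu_{f} = \mu_{f}$ then yields the pointwise cohomological identity
$$h(f^{n}(x)) \cdot (f^{n})'(x) = h(x)$$
for Lebesgue-almost every $x$ and every $n \in \N$. The strategy is to exploit the polynomial vanishing of $Df$ at each critical point $c$ of order $\alpha > 1$, i.e., $Df(x) \sim |x - c|^{\alpha - 1}$ near $c$, to show this identity cannot hold consistently for an integrable density.

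The first step is to install the dynamical partitions $\xi_{n}$ associated with the continued fraction expansion $\varrho = [a_{1}, a_{2}, \ldots]$: the atoms of $\xi_{n}$ are $T_{f}^{i}[x_{0}, T_{f}^{q_{n-1}}x_{0}]$ for $0 \leq i < q_{n}$ together with $T_{f}^{j}[x_{0}, T_{f}^{q_{n}}x_{0}]$ for $0 \leq j < q_{n-1}$. By Yoccoz's absence-of-wandering-intervals theorem for critical circle homeomorphisms, $\xi_{n}$ refines to generate the Borel $\sigma$-algebra as $n \to \infty$. The analytic heart of the proof is the collection of \emph{real bounds} for critical circle maps due to Graczyk-Swiatek: adjacent atoms of $\xi_{n}$ have uniformly comparable Lebesgue lengths, and the iterate $f^{q_{n}}$ restricted to any atom has uniformly bounded distortion outside a definite neighborhood of the critical points. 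These bounds rest on the cross-ratio inequality for $C^{3}$ maps, together with an induction on $n$ treating the compositions that pass through critical points by exploiting the explicit polynomial form there.

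Using real bounds the argument is completed as follows. Fix a critical point $c$ of order $\alpha > 1$, and let $I_{n}(c) \in \xi_{n}$ be the atom containing $c$. By real bounds, $|I_{n}(c)| \asymp 1/q_{n}$ and $\mu_{f}(I_{n}(c)) \asymp 1/q_{n}$. On the other hand, the polynomial form $f(x) - f(c) \sim (x-c)|x-c|^{\alpha - 1}$ yields $|f(I_{n}(c))| \asymp |I_{n}(c)|^{\alpha} \asymp q_{n}^{-\alpha}$, while invariance forces
$$\int_{f(I_{n}(c))} h \, dl \;=\; \mu_{f}(f(I_{n}(c))) \;=\; \mu_{f}(I_{n}(c)) \;\asymp\; \frac{1}{q_{n}},$$
so the average of $h$ over $f(I_{n}(c))$ is of order $q_{n}^{\alpha - 1} \to \infty$. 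Propagating this local concentration forward along the orbit of $f(c)$, which is dense in $S^{1}$, one shows by a covering and Fatou-type argument that the purported density $h$ must be essentially unbounded on a family of disjoint sets whose $L^{1}$-mass diverges, contradicting $h \in L^{1}(S^{1})$.

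The hard part will be securing the real bounds with sufficient sharpness. Critical points disrupt the Koebe-type distortion estimates used for $C^{2}$ diffeomorphisms (as in Theorem \ref{KO}), so one must replace them by cross-ratio inequalities tailored to the polynomial tangencies and run an induction on the renormalization level to control how the critical behavior propagates under composition. Once the real bounds are established, the singularity of $\mu_{f}$ is forced by the multiplicative structure of the invariance identity combined with the strictly super-linear contraction of $f$ near each critical point, a phenomenon absent in the smooth diffeomorphism setting.
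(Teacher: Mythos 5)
First, a point of order: the paper does not prove Theorem \ref{GS} at all --- it is quoted as background from Graczyk--Swiatek \cite{GrSw1993} --- so there is no internal proof to compare yours against. Judged on its own terms, your sketch contains two genuine gaps. The first is the claim that $|I_{n}(c)|\asymp 1/q_{n}$ for the atom of $\xi_{n}$ containing the critical point. The real bounds give comparability of \emph{adjacent} atoms and exponential decay of their lengths, but they do not tie the Lebesgue length of an atom to $1/q_{n}$; only the $\mu_{f}$-measure of a generation-$n$ atom is $\asymp\|q_{n-1}\varrho\|\asymp 1/q_{n}$. If every atom had Lebesgue length comparable to its $\mu_{f}$-measure uniformly in $n$, then $\mu_{f}$ would be comparable to Lebesgue measure on a generating sequence of partitions --- essentially the opposite of what you are trying to prove --- so this step cannot follow from real bounds, and it does not follow from the contradiction hypothesis either, since absolute continuity gives no two-sided comparison of $\mu_f$ with $l$ at small scales.

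The second gap is the conclusion. Even granting that the average of $h$ over $f(I_{n}(c))$ is of order $q_{n}^{\alpha-1}\to\infty$, this is perfectly consistent with $h\in L^{1}$: integrable densities routinely have unbounded averages over shrinking intervals. Your proposed rescue --- pushing the concentration forward to obtain disjoint sets of divergent total $L^{1}$-mass --- fails quantitatively, because invariance assigns each of the $q_{n}$ disjoint forward images of $f(I_{n}(c))$ the mass $\mu_{f}(I_{n}(c))\asymp 1/q_{n}$, so their combined mass is $O(1)$, not divergent. A workable route is the one this very paper uses for break points (Lemmas \ref{lemm4.1}--\ref{lemm4.5}): show that the conjugacy $T_\varphi$ cannot have a finite positive derivative at any point, by exhibiting, for a suitable quadruple $z_{1}<z_{2}<z_{3}<z_{4}$ in a $q_{n}$-small interval straddling (a preimage of) the singularity, a cross-ratio distortion $Dist(z_{1},z_{2},z_{3},z_{4};T_{f}^{q_{n}})$ bounded away from $1$, while differentiability of $\varphi$ at $x_{0}$ forces it to be within $C\varepsilon$ of $1$. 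At a critical point the role of the jump-ratio defect of Lemma \ref{lemm4.3} is played by the vanishing of $Df$ at $c$, which again makes the single distorted factor uniformly far from $1$; combined with ergodicity this yields $D\varphi=0$ almost everywhere, hence singularity. As it stands, your argument establishes only that $h$ has large local averages near the critical orbit, which is not a contradiction.
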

The class of $P$-homeomorphisms consists of orientation preserving circle homeomorphisms $T_{f}$ 
which are differentiable away from countable many points, the so called break points,
at which left and right derivatives, denoted respectively by $Df_{-}$ and $Df_{+}$, exist such that
\begin{itemize}
\item[i)] there exist constants $0<c_{1}<c_{2}<\infty$ with $c_{1}<Df(x)<c_{2}$ for all  $x\in S^{1}\setminus BP(f)$,\\
$c_{1}<Df_{-}(x_{b})<c_{2}<$ \,\, and  $c_{1}<Df_{+}(x_{b})<c_{2}$ \,\, for all  $x_{b}\in BP(f)$, \, the set of break points of $f$;
\item[ii)]  $logDf$ has bounded variation.
\end{itemize}
In this case $logDf$,  $logDf_{-}$, $logDf_{+}$ and $logDf^{-1}$, $logDf_{-}^{-1}$, $logDf_{+}^{-1}$ all have the same total variation denoted by $v=Var(logDf)$.

The ratio $\sigma_{f}(x_{b})= \frac{Df_{-}({c_{b}})}{Df_{+}({c_{b}})}$ is
called the jump ratio of $T_{f}$ in $x_{b}$ .

Piecewise linear (PL) orientation preserving circle homeomorphisms with piecewise constant 
derivatives are the simplest examples of class $P$-homeomorphisms. They occur in many other areas
of mathematics such as group theory, homotopy theory and logic via the Thompson groups (see \cite{St1992}. 
PL-homeomorphisms were considered first by Herman in \cite{He1979} as examples of homeomorphisms of arbitrary irrational rotation number which admit no invariant $\sigma -$finite measure equivalent to  Lebesgue measure.
\begin{theo}(Herman)\label{H}. A PL-circle homeomorphisms with two break points and irrational rotation number
has an invariant measure absolutely continuous with respect to Lebesgue measure if and only if its break points belong to the same orbit.
\end{theo}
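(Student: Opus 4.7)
The plan is to rephrase the existence of an absolutely continuous $T_f$-invariant probability measure $\mu_f=h\cdot l$ as the solvability of the multiplicative cocycle equation $h\circ T_f\cdot Df=h$, i.e., as $\log Df$ being a measurable additive coboundary for $T_f$. Via Denjoy's conjugacy $T_\varphi$ of $T_f$ with the irrational rotation $R_{\varrho_f}$, this transports to the rotation side: $\log Df$ is a coboundary for $T_f$ iff its pullback $G:=\log Df\circ T_\varphi$ is a coboundary for $R_{\varrho_f}$. Under the PL hypothesis with exactly two break points, $\log Df$ is a simple two-step function, and the identity $\sigma_f(a_b)\sigma_f(c_b)=1$ (automatic for a circle PL-homeomorphism with only two breaks) forces the two jumps of $G$, located at $A:=T_\varphi^{-1}(a_b)$ and $C:=T_\varphi^{-1}(c_b)$, to have equal magnitude and opposite sign.

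The main technical tool is the following classification, which I would establish separately: a two-step function on $S^1$ with two jumps of equal magnitude and opposite sign at $A,C$ is a measurable coboundary for $R_{\varrho_f}$ if and only if $C-A\equiv n\varrho_f\pmod 1$ for some $n\in\Z$, and in that case a bounded piecewise constant coboundary $\Psi$ with jumps located on the finite set $\{A+k\varrho_f:0\le k\le n-1\}$ can be written down explicitly by telescoping. The proof is either Fourier (solving $\widehat\Psi(k)(1-e^{2\pi ik\varrho_f})=\widehat G(k)$, whose small denominators are tamed precisely by the factor $1-e^{2\pi ik(C-A)}$ appearing in $\widehat G$ exactly when $C-A\in\varrho_f\Z+\Z$) or via Gottschalk--Hedlund combined with Denjoy--Koksma estimates forcing $S_n G$ to stay bounded iff the same arithmetic condition holds.

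Granting this lemma, necessity follows: absolute continuity of $\mu_f$ makes $T_\varphi$ an absolutely continuous homeomorphism with absolutely continuous inverse, so the transfer of the cohomological equation to the rotation is legitimate; applying the lemma and conjugating $C-A=n\varrho_f$ back via $T_\varphi\circ R_{\varrho_f}^n=T_f^n\circ T_\varphi$ yields $c_b=T_f^n(a_b)$. Conversely, if $c_b=T_f^n(a_b)$ then $C=A+n\varrho_f$, the lemma produces the bounded $\Psi$, and $h:=\exp(\Psi)\circ T_\varphi^{-1}$ is the desired positive $L^1$ density (boundedness of $\Psi$ gives $h\in L^1$; the functional equation $h\circ T_f\cdot Df=h$ is a direct verification using $T_f\circ T_\varphi=T_\varphi\circ R_{\varrho_f}$). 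The principal obstacle is the classification lemma itself: the qualitative ``iff jumps on a single orbit'' picture is classical, but making it rigorous for arbitrary irrational $\varrho_f$ without Diophantine assumptions requires delicate control of small divisors, either by direct Fourier cancellation or by exploiting the Ostrowski combinatorics of $\varrho_f$ through Denjoy--Koksma. A secondary subtlety in the necessity step is ensuring that the pullback of the cohomological equation through $T_\varphi^{-1}$ is justified, which is exactly what the hypothesis of absolute continuity provides.
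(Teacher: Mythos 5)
The paper does not prove this statement: it is quoted verbatim from Herman \cite{He1979}, so there is no internal proof to compare against. Judged on its own terms, your reduction is the correct and standard one: absolute continuity of $\mu_f$ is equivalent to $\log Df$ being a measurable additive coboundary for $T_f$; the PL hypothesis with exactly two breaks forces $\sigma_f(a_b)\sigma_f(c_b)=1$, so the transported function $G=\log Df\circ T_\varphi$ is (up to an additive constant making its mean zero) a multiple of $\mathbf{1}_{[A,C)}-|C-A|$; and the sufficiency half is complete, since when $C-A\equiv n\varrho_f$ the telescoping transfer function is explicit, bounded, and piecewise constant, so the verification of $h\circ T_f\cdot Df=h$ needs no regularity of $T_\varphi^{-1}$. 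Your handling of the measure-class issues in the necessity step (using $h>0$ a.e., from ergodicity, to make $T_\varphi^{\pm1}$ map null sets to null sets) is also sound.

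The genuine gap is the ``only if'' half of your classification lemma, which is exactly where the whole theorem lives, and the two methods you name for it would both fail as stated. Gottschalk--Hedlund is a statement about \emph{continuous} cocycles and continuous transfer functions over minimal systems; $G$ is a step function, so the theorem does not apply, and in any case boundedness of Birkhoff sums is not the right criterion: Denjoy--Koksma gives $|S_{q_n}G|\leq \mathrm{Var}(G)$ for \emph{every} zero-mean BV function, coboundary or not, so boundedness along the $q_n$ cannot detect the arithmetic condition $C-A\in\varrho_f\mathbb{Z}+\mathbb{Z}$. The Fourier route $\widehat\Psi(k)=\widehat G(k)/(1-e^{2\pi ik\varrho_f})$ only characterizes $L^{2}$-coboundaries; divergence of $\sum|\widehat\Psi(k)|^{2}$ does not exclude a merely measurable, non-square-integrable transfer function, which is what the invariant-density argument produces. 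To rule out a measurable solution for arbitrary irrational $\varrho_f$ (no Diophantine hypothesis) one needs genuinely finer input: either Herman's original argument, or the theory of essential values of cocycles in the sense of K.\ Schmidt (the route taken in \cite{HS1982} and \cite{Li2005}), or Oren-type ergodicity theorems for the cylinder flow over $R_{\varrho_f}$ with cocycle $\mathbf{1}_{[0,\beta)}-\beta$; a particular pitfall is that $\|q_n\beta\|\to0$ can hold for $\beta\notin\varrho_f\mathbb{Z}+\mathbb{Z}$, so even the in-measure convergence $S_{q_n}G\to0$ forced by a measurable coboundary does not immediately yield the orbit condition. You correctly flag this lemma as the principal obstacle, but as written the proposal does not contain a proof of it, and the hard direction of Herman's theorem therefore remains open in your argument.
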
 
General (non PL) class $P$-homeomorphisms with one break point have been studied by Dzhalilov and Khanin in \cite{DK1998}.
The character of their results for such circle maps is  quite different from the one for $C^{2+\varepsilon}$ diffeomorphisms.
The main result of \cite{DK1998} is the following:

\begin{theo}\label{DK}. Let $T_{f}$ be a class $P$-homeomorphism with one break point $c_{b}$. If the rotation number $\rho_{f}$ is irrational and $T_{f}\in C^{2+\varepsilon}(S\backslash\{c_{b}\})$  for some $\varepsilon>0$, then the $T_{f}$-invariant probability measure $\mu_{f}$ is singular with respect to Lebesgue measure l on $S^{1}$, i.e. there exists a measurable
 subset $A\subset S^{1}$  such that $\mu_{f}(A)=1$ and $l(A)=0$.\\
\end{theo}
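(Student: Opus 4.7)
The plan is to use the renormalization/dynamical-partition framework for circle homeomorphisms, adapted to the $P$-homeomorphism setting, and show that the single break at $c_{b}$ produces an accumulating multiplicative asymmetry that is incompatible with absolute continuity of $\mu_{f}$.

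I first set up, for each $n$, the $n$-th dynamical partition $\xi_{n}=\xi_{n}(c_{b})$ of $S^{1}$ determined by the finite orbit $\{T_{f}^{i}(c_{b}):0\le i< q_{n}+q_{n-1}\}$, where $p_{n}/q_{n}$ are the convergents of $\varrho_{f}=[k_{1},k_{2},\dots]$. This partition consists of two Rohlin towers (``long'' of height $q_{n-1}$, ``short'' of height $q_{n}$) that are cyclically permuted by $T_{f}$, and $\xi_{n+1}$ refines $\xi_{n}$ by a standard combinatorial rule. Using $T_{f}\in C^{2+\varepsilon}(S^{1}\setminus\{c_{b}\})$ and $v:=\mathrm{Var}(\log Df)<\infty$, the Denjoy-Koksma inequality gives, for any atom $\Delta\in\xi_{n}$ whose $q_{n}$-forward orbit avoids $c_{b}$, the distortion bound
$$
\left|\log Df^{q_{n}}(x)-\log Df^{q_{n}}(y)\right|\le v, \qquad x,y\in\Delta,
$$
so $Df^{q_{n}}$ is uniformly bounded with bounded oscillation on every such atom.

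The crucial computation is the $T_{f}^{q_{n}}$-jump at the break point. By the chain rule and irrationality of $\varrho_{f}$ (which forbids $T_{f}^{i}(c_{b})=c_{b}$ for $1\le i\le q_{n}-1$, so all intermediate $Df$ values agree on both sides),
$$
\frac{Df_{+}^{q_{n}}(c_{b})}{Df_{-}^{q_{n}}(c_{b})}=\frac{Df_{+}(c_{b})}{Df_{-}(c_{b})}=\sigma_{f}(c_{b})^{-1}\ne 1.
$$
Consequently the induced first-return map of $T_{f}^{q_{n}}$ to an interval adjacent to $c_{b}$ is again a piecewise smooth homeomorphism with a single break of jump $\sigma_{f}(c_{b})^{-1}$, whose rotation number is the Gauss-shift of $\varrho_{f}$. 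I would then iterate this renormalization: combining the bounded-distortion step with the break-induced asymmetry, the Lebesgue lengths of a suitable sequence of nested atoms $\Delta_{n_{k}}$ adjacent to $c_{b}$ acquire a product of $\sigma_{f}(c_{b})^{\pm 1}$ factors whose logarithms diverge as $k\to\infty$. On the other hand, $T_{f}$-invariance of $\mu_{f}$ forces all atoms of $\xi_{n}$ within a single Rohlin tower to have equal $\mu_{f}$-measure, so each atom carries $\mu_{f}$-mass of order $1/q_{n}$.

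These two estimates together yield, along a subsequence, atoms with $\mu_{f}$-mass bounded below polynomially in $1/q_{n_{k}}$ but with Lebesgue measure decaying super-polynomially. Assembling appropriate unions of their $T_{f}$-translates produces a Borel set $A\subset S^{1}$ with $\mu_{f}(A)=1$ and $l(A)=0$, i.e.\ $\mu_{f}\perp l$. The main obstacle is the inductive amplification: the jump ratio $\sigma_{f}(c_{b})^{-1}$ is constant in $n$, so one must show that it genuinely accumulates under renormalization and is not smoothed away by the intermediate $C^{2+\varepsilon}$ iterates. The bookkeeping required — tracking the positions of the images $T_{f}^{i}(c_{b})$ inside the refining partitions $\xi_{n}$ and how the single break feeds into the next renormalization step — is governed by the continued-fraction coefficients $k_{n}$ of $\varrho_{f}$ and is the technical heart of the proof.
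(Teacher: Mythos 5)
Your setup is sound as far as it goes: the dynamical partitions, the Denjoy--Koksma bound on $\log Df^{q_n}$ over an atom, and the chain-rule computation showing that the jump of $Df^{q_n}$ at $c_b$ equals $\sigma_f(c_b)^{-1}$ for every $n$ are all correct and are indeed the starting point of the argument in \cite{DK1998} and in this paper. But the decisive step is absent. A fixed, $n$-independent jump of $Df^{q_n}$ at the single point $c_b$ does not by itself imply that the Lebesgue lengths of nested atoms ``acquire a product of $\sigma_f(c_b)^{\pm1}$ factors whose logarithms diverge,'' nor that $l(\Delta)$ decays super-polynomially relative to $\mu_f(\Delta)$: for bounded-type $\rho$ both quantities can decay at comparable geometric rates, and singularity is an almost-everywhere statement about the density $D\varphi$, not a uniform rate statement about atoms. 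You explicitly defer the ``inductive amplification'' to ``the technical heart of the proof,'' but that heart is the theorem; without it nothing prevents the smooth $C^{2+\varepsilon}$ part of the dynamics from absorbing the jump. The final assembly of $A$ is likewise unsupported: the atoms adjacent to $c_b$ together with their $T_f$-translates exhaust the circle, so the claimed length estimates would be needed for essentially all atoms, not just a subsequence near $c_b$.

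What supplies the missing mechanism in \cite{DK1998} and here is the cross-ratio distortion $Dist(z_1,z_2,z_3,z_4;T_f^{q_n})$, which is exactly a quantity that is (i) forced close to $1$ wherever $D\varphi$ exists and is positive, and (ii) forced a definite distance from $1$ by the break. Concretely, if $D\varphi(x_0)=\omega>0$, then for comparable quadruples near $x_0$ both $Dist(\cdot\,;T_\varphi)$ and $Dist(T_f^{q_n}\cdot\,;T_\varphi)$ are within $C\varepsilon$ of $1$ (Lemma~\ref{lemm4.1}), and since $T_\varphi$ conjugates $T_f$ to a rigid rotation (which preserves cross-ratios) this forces $Dist(z_1,\dots,z_4;T_f^{q_n})$ to be within $C\varepsilon$ of $1$. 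On the other hand, factoring this distortion into $q_n$ one-step distortions, the smooth factors multiply to something $\varepsilon$-close to $1$ (this uses $D^2f\in L^1$ plus disjointness of the $T_f^i[z_1,z_4]$, Theorem~\ref{theo3.1}), while the single factor whose interval contains the break contributes $\frac{\sigma(1+\xi)}{\sigma+\xi}$, bounded away from $1$ once the quadruple is chosen with $z_2$ (or $z_3$) at the preimage of $c_b$ and $\xi$ controlled (Lemmas~\ref{lemm3.2}, \ref{lemm4.4}, \ref{lemm4.5}). The contradiction yields $D\varphi=0$ at every point of differentiability, hence $\varphi$ is singular and $\mu_f\perp l$. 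To complete your renormalization route you would need an analogous quantitative invariant; bounded variation and Denjoy control alone do not provide one.
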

I. Liousse proved in \cite{Li2005} the same result for "generic" PL-homeomorphisms with irrational rotation number of bounded type. In a next step Dzhalilov and I. Liousse studied in \cite{DL2006} circle homeomorphisms with two break points. Their result is the following:

\begin{theo}\label{DL}. Let $T_{f}$ be a class $P$-homeomorphism  satisfying the following conditions:
\begin{itemize}
\item[i)]   $T_{f}$ has irrational rotation number $\rho_{f}$ of bounded type;
\item[ii)]   there exists constants $k_{i}>0$ such that 
$|Df(x)-Df(y)|\le k_{i}|x-y|$  on every continuity interval of $Df$;
\item[iii)]  $T_{f}$ has two break points not on the same orbit of $T_{f}$.
\end{itemize}
Then the $T_{f}$- invariant probability measure $\mu_{f}$ is singular with respect to Lebesgue measure.
\end{theo}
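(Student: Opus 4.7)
The strategy is to extend the one-break-point argument of Dzhalilov--Khanin \cite{DK1998} (Theorem~\ref{DK}) to two break points, using the bounded-type and different-orbits hypotheses in an essential way. Let $p_n/q_n$ denote the convergents of $\rho_f$ and $\xi_n$ the $n$-th dynamical partition of $S^1$, whose atoms are the $T_f$-iterates of the fundamental intervals $[0,T_f^{q_n}(0))$ and $[T_f^{q_n}(0),T_f^{q_{n-1}}(0))$. The bounded-type hypothesis (i) gives $q_{n+1}\le M q_n$ for all $n$, so consecutive scales are comparable, and by Denjoy--Koksma each atom of $\xi_n$ has $\mu_f$-measure $|q_n\rho_f-p_n|$ or $|q_{n-1}\rho_f-p_{n-1}|$, all atoms being of comparable $\mu_f$-measure.

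The first analytical ingredient is a distortion estimate. The Lipschitz hypothesis (ii) on $Df$ ensures $D\log Df$ is essentially bounded on each continuity interval; combined with the bounded geometry of $\xi_n$ provided by (i), this yields, for any $x,y$ in the same atom of $\xi_n$ whose first $q_n$ iterates avoid small neighbourhoods of $a_b,c_b$, the bound $|\log Df^{q_n}(x)-\log Df^{q_n}(y)|=O(1)$ uniformly in $n$. This is the step where hypothesis (ii) enters: it sharpens the BV Denjoy estimate into a genuine distortion bound.

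The core step is the analysis of the cocycle $\log Df^{q_n}(x)=\sum_{i=0}^{q_n-1}\log Df(T_f^i x)$ via the decomposition $\log Df=\phi+h$, where $\phi$ is a two-step function carrying the jumps $\log\sigma_f(a_b),\log\sigma_f(c_b)$ and $h$ is Lipschitz. The smooth sum $S_{q_n}(h)$ is controlled by the distortion bound above, while $S_{q_n}(\phi)(x)$ encodes, weighted by $\log\sigma_f(a_b),\log\sigma_f(c_b)$, the number of times the orbit of length $q_n$ crosses each break point. The different-orbits hypothesis (iii) is crucial here: because $a_b$ and $c_b$ lie in disjoint $T_f$-orbits, the three-distance theorem applied to the rigid rotation $T_{\rho_f}$ (topologically conjugate to $T_f$) shows that the positions of $\{T_f^{-i}(a_b)\}$ and $\{T_f^{-i}(c_b)\}$ in the partition $\xi_n$ interleave in a way that prevents systematic cancellation of the jump contributions. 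A combinatorial count then yields $|S_{q_n}(\phi)(x)|\to\infty$ along the $q_n$ subsequence for $\mu_f$-a.e.\ $x$, hence $|\log Df^{q_n}(x)|\to\infty$ as well.

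The conclusion follows the template of \cite{DK1998}: if $\mu_f\ll l$ with density $h$, then iterating the invariance relation $Df(x)\,h(T_fx)=h(x)$ gives $\log Df^{q_n}(x)=\log h(x)-\log h(T_f^{q_n}x)$, which stays bounded along $n$ for $\mu_f$-a.e.\ $x$ (using $T_f^{q_n}x\to x$ together with Luzin's theorem applied to $h$, which is finite and positive $\mu_f$-a.e.). This contradicts the divergence of $\log Df^{q_n}$ from the previous step, so $\mu_f\perp l$, and the singular set is $A=\{x:\limsup_n|\log Df^{q_n}(x)|=\infty\}$. The principal obstacle compared to the one-break-point case is the combinatorial analysis of $S_{q_n}(\phi)$ for two breaks: the jumps from $a_b$ and $c_b$ could a priori cancel at every renormalization scale, which would destroy singularity --- Herman's Theorem~\ref{H} on PL homeomorphisms shows exactly this behaviour when the break points \emph{are} on the same orbit. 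Hypothesis (iii) is designed to prevent this, and the main technical content of the proof is to convert (iii) into a precise, scale-uniform non-cancellation statement for the sums $S_{q_n}(\phi)$.
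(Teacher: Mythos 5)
Your approach is genuinely different from the paper's. You attempt to show that the Birkhoff sums $\log Df^{q_n}(x)$ \emph{diverge}, and then contradict the cohomological identity $\log Df^{q_n}(x)=\log h(x)-\log h(T_f^{q_n}x)$ that would hold if $\mu_f$ had a density $h$. The paper (and the original argument in \cite{DL2006}) instead works with the cross-ratio distortion of $T_f^{q_n}$: it constructs at each scale a quadruple of comparable intervals around a point $x_0$ whose cross-ratio distortion under $T_f^{q_n}$ stays bounded away from $1$ (Lemmas~\ref{lemm4.1}--\ref{lemm4.5}), and derives a contradiction with the fact that $D\varphi(x_0)$ would be positive and finite Lebesgue-a.e.\ whenever $\mu_f\ll l$; within this paper the statement you are addressing is covered by Theorem~\ref{DLM} when $\sigma(a_b)\sigma(c_b)\ne1$ and by Theorem~\ref{DLM1} (which uses the bounded-type hypothesis to find scales $n_k$ at which $\overline a_b$ and $\overline c_b$ are comparable) when $\sigma(a_b)\sigma(c_b)=1$.

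There is, however, a fatal gap: your central claim that $|S_{q_n}(\phi)(x)|\to\infty$, and hence $|\log Df^{q_n}(x)|\to\infty$, for $\mu_f$-a.e.\ $x$ is false. It is directly contradicted by the Denjoy inequality, Lemma~\ref{lem2.1}: for every $x_0$ whose first $q_n$ iterates avoid the break points (a full-measure condition) one has $e^{-v}\le Df^{q_n}(x_0)\le e^{v}$ with $v=Var(\log Df)$, so $|\log Df^{q_n}(x_0)|\le v$ is uniformly bounded in $n$. No combinatorial count on the jump part $\phi$ can produce divergence, because the Lipschitz remainder $h$ is forced by Denjoy to compensate $S_{q_n}(\phi)$ to within $O(1)$; the two pieces of your decomposition are not independent in the way the sketch assumes. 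What is actually true, and what the paper exploits, is a much weaker statement: a second-order quantity (the cross-ratio distortion of $T_f^{q_n}$ on carefully chosen $q_n$-small quadruples) is bounded but stays bounded \emph{away from} $1$; converting "away from $1$" into a contradiction with $\mu_f\ll l$ requires the differentiability-of-$\varphi$ argument rather than the density identity you invoke. A secondary issue: when $\sigma(a_b)\sigma(c_b)\ne1$ the two jumps of $\log Df$ do not sum to zero, so no piecewise-constant $\phi$ on $S^1$ can carry both of them; $\phi$ must include a linear drift, which already alters the Denjoy--Koksma analysis you propose even before the main obstruction above is reached.
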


In the present paper we continue our study of invariant measures for circle 
homeomorphisms  $T_{f}$ with two break points and arbitrary irrational rotation number $\rho_{f}$.
The main result of our paper is the following:
\begin{theo}\label{DLM}. Let $T_{f}$ be a class $P$-homeomorphism  satisfying the following conditions:
\begin{itemize}
\item[(a)]  the rotation number $\rho=\rho_{f}$ of $T_{f}$ is irrational;
\item[(b)] $T_{f}$ has two break points $a_{b}$, $c_{b}$ and the product of the jump ratios of $Df$ at the break points is nontrivial i.e. $\sigma_{f}(a_{b})\cdot\sigma_{f}(c_{b})\ne 1$.
\item[(c)]  $Df(x)$ is absolutely continuous on
every connected interval of $S^{1}\backslash\{a_{b}, c_{b}\}$ and the second derivative $D^{2}f\in L^{1}$;
\end{itemize}
Then the $T_{f}$- invariant probability  measure $\mu_{f}$ is singular with respect to Lebesgue measure.\end{theo}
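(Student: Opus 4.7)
The plan is to follow the general dynamical partition strategy used in Theorem~\ref{DK} and Theorem~\ref{DL}, but to replace the bounded-type assumption on $\rho_{f}$ by a more careful book-keeping of break-point visits that exploits the regularity hypothesis $D^{2}f\in L^{1}$. Write $\rho_{f}=[a_{1},a_{2},\dots]$ with convergents $p_{n}/q_{n}$, and, for an arbitrary base point $x_{0}\in S^{1}$, let $\Delta_{n}=\Delta_{n}(x_{0})$ be the interval between $x_{0}$ and $T_{f}^{q_{n}}(x_{0})$. The associated dynamical partition
$$
\xi_{n}=\{T_{f}^{i}\Delta_{n}:0\le i<q_{n+1}\}\cup\{T_{f}^{j}\Delta_{n+1}:0\le j<q_{n}\}
$$
covers $S^{1}$, refines with $n$, and will serve as the main combinatorial tool throughout.

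First I would establish a sharp distortion lemma for $T_{f}^{q_{n}}$ on the intervals of $\xi_{n}$. Using $D(\log Df)\in L^{1}$ together with the Denjoy--Koksma inequality applied to $\log Df$, a function of bounded variation on $S^{1}\setminus\{a_{b},c_{b}\}$, I expect to write, for $x$ in any $\xi_{n}$-interval,
$$
\log DT_{f}^{q_{n}}(x)=R_{n}(x)+N_{n}(x,a_{b})\log\sigma_{f}(a_{b})+N_{n}(x,c_{b})\log\sigma_{f}(c_{b}),
$$
where $R_{n}(x)$ is a ``smooth residue'' uniformly controlled by $\|D^{2}f\|_{L^{1}}+v$ and $N_{n}(x,\cdot)\in\Z$ records, with sign, how many times the forward orbit of length $q_{n}$ has passed over each break point. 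The corresponding estimate for ratios of adjacent intervals $I,J\in\xi_{n}$ reads $|I|/|J|\asymp\sigma_{f}(a_{b})^{N_{n}(a_{b})}\sigma_{f}(c_{b})^{N_{n}(c_{b})}$, up to a multiplicative constant independent of $n$.

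Next, I would exploit $\sigma_{f}(a_{b})\sigma_{f}(c_{b})\ne 1$ to produce, for each sufficiently large $n$, a subcollection $\mathcal{E}_{n}\subset\xi_{n}$ whose total $\mu_{f}$-measure is bounded below by a fixed positive constant while its Lebesgue measure decays geometrically. Concretely, one of the break points, say $a_{b}$, lies in a specific interval $M_{n}\in\xi_{n}$ whose ratio to its neighbour is tilted by $\sigma_{f}(a_{b})^{\pm 1}$ compared with the diffeomorphism case; iterating forwards until the orbit visits $c_{b}$ and using $\sigma_{f}(a_{b})\sigma_{f}(c_{b})\ne 1$, the jump factors do not cancel and one extracts a subsequence $n_{k}$ along which the deficit accumulates to give $l(\bigcup\mathcal{E}_{n_{k}})\le C\lambda^{k}$ with $\lambda<1$, while $\mu_{f}(\bigcup\mathcal{E}_{n_{k}})\ge\eta>0$. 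Setting $A=\bigcap_{K}\bigcup_{k\ge K}\bigcup\mathcal{E}_{n_{k}}$ yields $l(A)=0$ by Borel--Cantelli and $\mu_{f}(A)\ge\eta$ by Fatou; ergodicity of $\mu_{f}$ then upgrades this to a $T_{f}$-invariant set of full $\mu_{f}$-measure and zero Lebesgue measure, proving singularity.

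The hardest step will be the combinatorial control of $N_{n}(x,a_{b})$ and $N_{n}(x,c_{b})$ for rotation numbers of unbounded type. When a partial quotient $a_{n+1}$ is very large, the orbit of a break point may remain trapped in a single $\xi_{n}$-interval for many iterates, so that the jumps pile up on one side and must be compared with the smooth residue $R_{n}$, whose size is now controlled only through $D^{2}f\in L^{1}$ rather than the stronger Lipschitz hypothesis of Theorem~\ref{DL}. The role of condition (c) is precisely to give, via the absolute continuity of $Df$, a Koksma-type bound for $R_{n}$ along almost every orbit; this is what should allow the argument to be pushed from bounded type to arbitrary irrational $\rho_{f}$.
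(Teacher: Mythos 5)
Your proposal follows the general philosophy of Dzhalilov--Khanin (dynamical partitions, tracking the effect of the breaks across an orbit of length $q_{n}$), but it diverges from the paper's proof in a way that introduces a genuine gap.

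The decomposition you write down for $\log DT_{f}^{q_{n}}(x)$ is not correct as stated. For a fixed $x$, the sum $\sum_{i=0}^{q_{n}-1}\log Df(T_{f}^{i}x)$ never sees the jump ratios: the orbit of a single point almost never passes exactly through $a_{b}$ or $c_{b}$, so each term is just $\log Df$ evaluated off the break set, and there is no integer $N_{n}(x,\cdot)$ multiplying $\log\sigma_{f}$. The jump ratio manifests itself only when one compares a \emph{pair} or \emph{quadruple} of orbits whose intermediate intervals $T_{f}^{i}[z_{1},z_{4}]$ straddle a break point, and even then the contribution is not quantized by $\sigma_{f}^{\pm1}$ -- it depends continuously on where inside the straddling interval the break point sits. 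This is precisely what the paper's normalized coordinates $\xi(m)$, $z(m)$, $\eta(m)$, $\vartheta(m)$ and the functions $G(x)$, $F(x,t)$ of \eqref{eq20} encode, and why Lemma~\ref{lemm3.2} estimates a \emph{cross-ratio distortion} rather than a derivative. Your ``$|I|/|J|\asymp\sigma_{f}(a_{b})^{N_{n}(a_{b})}\sigma_{f}(c_{b})^{N_{n}(c_{b})}$'' is a heuristic that is only asymptotically right in very degenerate configurations, and building a Borel--Cantelli set $\mathcal{E}_{n}$ on it requires estimates you have not supplied.

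The paper's route also avoids the ``accumulation over a subsequence'' difficulty that you correctly identify as the hardest part of your plan. Rather than constructing a measurable set $A$ with $\mu_{f}(A)=1$, $l(A)=0$ by hand, the paper argues pointwise: if $D\varphi(x_{0})=\omega>0$ existed at some $x_{0}$, then by Lemma~\ref{lemm4.1} the cross-ratio distortion of $T_{\varphi}$ on a well-chosen quadruple near $x_{0}$ would be close to $1$, and since $T_{\varphi}$ conjugates $T_{f}$ to the rotation (which preserves cross-ratios exactly), the cross-ratio distortion of $T_{f}^{q_{n}}$ on that quadruple would also be close to $1$ (this is \eqref{eq26}). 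Lemmas~\ref{lemm4.2}--\ref{lemm4.5} then show that for a triple of intervals that ``covers the breaks regularly'' (Definition~\ref{defi4.1} and Lemma~\ref{lemm4.4}, constructed around the preimages $\overline{a}_{b}$, $\overline{c}_{b}$ in a single generator of $\xi_{n}$), the distortion stays a fixed distance from $1$ because $\sigma_{f}(a_{b})\sigma_{f}(c_{b})\ne1$. This contradiction gives $D\varphi=0$ a.e., hence singularity. The contradiction argument operates at one scale $n$ at a time, so no subsequence bookkeeping and no control on how the deficits compound across scales is needed; the condition $D^{2}f\in L^{1}$ enters only through Theorem~\ref{theo3.1}, to kill the smooth part of the distortion as $n\to\infty$. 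Without an argument of comparable precision (or the ergodicity step spelled out correctly, including how $\mu_{f}(A)\ge\eta$ and invariance produce $\mu_{f}(A)=1$), the plan as written does not yet constitute a proof.
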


\begin{rem} {\rm Obviously condition (c) is weaker than  a Lipschiz condition for $Df$. In the case  when  $T_{f}$  has two break points on the same orbit our Theorem \ref{DLM} gives a new proof of the result in  \cite{DK1998}, but with a weaker condition than $C^{2+\varepsilon}$.} on $T_{f}$. 
\end{rem}
A direct consequence of our Theorem \ref{DLM} is
\begin{theo}\label{DLM1} Let $T_{f}$ be a circle homeomorphism satisfying 
 condition (c) of  Theorem \ref{DLM} and the conditions
\begin{itemize}
\item[(a')] the rotation number $\rho_{f}$ of $T_{f}$ is irrational of bounded type;
\item[(b')]  $T_{f}$ has two break points $a_{b}$, $c_{b}$ with disjoint orbits and 
$\sigma(a_{b})\sigma(a_{b})=1.$
\end{itemize}
Then the $T_{f}-$invariant measure $\mu_{f}$ is singular with respect to Lebesgue measure.
\end{theo}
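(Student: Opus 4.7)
The hypotheses of Theorem \ref{DLM1} agree with those of Theorem \ref{DLM} in condition (c), strengthen (a) to bounded type in (a'), and replace (b) by (b'), which imposes disjoint orbits of $a_b,c_b$ together with $\sigma(a_b)\sigma(c_b)=1$. In this regime the nontriviality hypothesis of Theorem \ref{DLM} fails, so my plan is not to apply Theorem \ref{DLM} to $T_f$ itself, but to produce an auxiliary $P$-homeomorphism to which it does apply.

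The natural candidate is the first return map $\tilde T$ of $T_f$ to a well-chosen subarc $I\subset S^{1}$, for instance an interval of the $n$-th renormalization partition. Bounded type of $\rho_f$ keeps the renormalization ratios comparable and the return times bounded, so $\tilde T$ stays a $P$-homeomorphism whose break points are the iterates of $a_b$ and $c_b$ landing in $I$; condition (c) is inherited through the chain rule, and verifying $D^{2}\tilde T\in L^{1}$ reduces to the Denjoy--Koksma type distortion estimates already available for $T_f$.

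The arithmetic core of the plan is that, because $a_b$ and $c_b$ lie on disjoint $T_f$-orbits, they contribute independently to the jump ratios of $\tilde T$. For an appropriate level $n$, the orbit of $a_b$ deposits $n_a$ break points in $I$ while that of $c_b$ deposits $n_c$, and bounded type allows us to choose $I$ so that $n_a\neq n_c$. The product of the jump ratios of $\tilde T$ then equals $\sigma(a_b)^{n_a}\sigma(c_b)^{n_c}$, which under the assumption $\sigma(a_b)\sigma(c_b)=1$ (with $\sigma(a_b)\neq 1$, as required for $a_b$ to be a genuine break) simplifies to $\sigma(a_b)^{n_a-n_c}\neq 1$. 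Hence Theorem \ref{DLM} applies to $\tilde T$ and yields singularity of its invariant probability measure, and the standard Kac-type relation between the invariant measure of an induced map and that of the original propagates singularity back to $\mu_f$.

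The main obstacle is that $\tilde T$ generically has more than two break points, whereas Theorem \ref{DLM} as stated treats exactly two. Overcoming this will require either a mild extension of the argument behind Theorem \ref{DLM} to finitely many break points with a nontrivial total product, or a grouping/smoothing procedure that replaces the breaks of $\tilde T$ along a single $T_f$-orbit by a single effective break while preserving the jump ratio on each orbit. In both variants the crucial input is the combination ``disjoint orbits + bounded type'': it is precisely this pairing that makes the effective product of jump ratios of $\tilde T$ nontrivial in spite of $\sigma(a_b)\sigma(c_b)=1$, thereby explaining why both (a') and the disjointness part of (b') are needed.
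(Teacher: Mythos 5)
Your plan diverges sharply from the paper's, and as written it has gaps that the paper's route avoids entirely. The paper does \emph{not} induce a return map: it runs the very same cross-ratio/distortion argument as in Theorem~\ref{DLM}, only with a different placement of the test intervals $[z_s,z_{s+1}]$. The key observation is that Lemma~\ref{lemm4.4} and Lemma~\ref{lemm4.5} already cover the degenerate case in which the orbit $\{T_f^i[z_1,z_4]\}_{0\le i<q_n}$ sees \emph{only one} of the two break points, and in that case the factor $G(\xi(l))=\frac{\sigma(a_b)(1+\xi(l))}{\sigma(a_b)+\xi(l)}$ (or its analogue for $c_b$) alone is bounded away from $1$. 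Using bounded type together with the disjointness of the two orbits, the paper finds a subsequence $n_k$ along which $\overline c_b=T_f^{-p}c_b$ sits inside a generator adjacent to $\overline a_b$ at a \emph{comparable} distance from $\overline a_b$ (the inequalities $K_3^{-1}\le \frac{\overline c_b-\overline a_b}{T_f^{q_{n_k-1}}\overline a_b-\overline a_b}\le K_3$, etc.). One then centers the four points $z_1,\dots,z_4$ at $\overline c_b$ with half-width $d_{n_k}=\min\{|\overline c_b-\overline a_b|,|T_f^{q_{n_k-1}}\overline a_b-\overline c_b|\}$, so that $[z_1,z_4]$ is $q_{n_k}$-small, is comparable to the generator, and \emph{avoids} $\overline a_b$. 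The hypothesis $\sigma(a_b)\sigma(c_b)=1$ then simply never enters: the single retained jump at $c_b$ (nontrivial since $c_b$ is a break point) produces the needed gap in $|Dist(z_1,\dots,z_4;T_f^{q_n})-1|$, and the contradiction with Lemma~\ref{lemm4.1} goes through unchanged.

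By contrast, your induced-map route has three unresolved problems, the first of which you yourself flag. (i) The first-return map $\tilde T$ generically has many break points (preimages of both $a_b$ and $c_b$ landing in $I$, \emph{plus} the point where the return time jumps), so Theorem~\ref{DLM} as stated does not apply; extending the whole Section~4 machinery to $k$ break points with a nontrivial total product is essentially a new theorem, not a ``mild extension.'' (ii) Your exponent count $n_a\ne n_c$ is asserted, not derived: the numbers of visits of the two orbits to a renormalization interval at level $n$ are governed by the same continued-fraction combinatorics and need not differ for the levels you can control; this requires an arithmetic argument that you do not give. (iii) The claimed identity ``product of jump ratios of $\tilde T$ $=\sigma(a_b)^{n_a}\sigma(c_b)^{n_c}$'' ignores the break point(s) of $\tilde T$ at the boundary of $I$ where the return time changes, whose jump ratios are products of derivatives of long blocks of $T_f$ and are not expressible in terms of $\sigma(a_b),\sigma(c_b)$ alone. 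You would also need to check that $D^2\tilde T\in L^1$, which does not follow mechanically from $D^2f\in L^1$ because of the long chain-rule products. The paper's choice of test intervals sidesteps all of these issues; your proposal, while a reasonable first instinct, would need substantially more work to close.
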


\section{Preliminaries and Notations}

Let $T_{f}$  be an orientation preserving homeomorphism of the circle with lift $f$ and irrational rotation number $\rho=\rho_{f}$.
  We take an arbitrary point $x_{0}\in S^{1}$ and
consider the trajectory of this point under the action of $T_{f}$,
i.e., the set of points $\{x_{i}=T_{f}^{i}x_{0}, \
i\in\mathbb{Z} \}$. According to a classical
 theorem of Poincar\'{e} (see \cite{CFS1982}), the order of the points
along the trajectory is the same as in the case of the linear
rotation $T_{\rho}$ of the circle, i.e. for the sequence
$\{\overline{x}_{i}=\{x_{0}+i\rho\}, mod 1, \, i\in\mathbb{Z} \}$. This
important property allows one to define a sequence  of natural
partitions of the circle closely related to the continued fraction expansion of the
number $\rho$ .

 We denote by  $\left\lbrace k_{n}, n \in \mathbb{N} \right\rbrace $ the sequence of entries in the continued fraction
expansion of $\rho$, so that $\varrho=\left[k_{1},k_{2},...,k_{n},...\right]=
\left( 1/k_{1}+\left( 1/\left( k_{2}+...+1/k_{n}+...\right) \right) \right)  $. 
For $n \in \mathbb{N}$ we denote by $p_{n}/q_{n}=[k_{1},k_{2},...,k_{n}] $ the convergents of $\rho$. 
Their denominators $q_{n}$ satisfy the recursion relation 
$q_{n+1}=k_{n+1}q_{n}+q_{n-1}, \ n\geq 1, \ q_{0}=1, \ q_{1}=k_{1}$.

For an arbitrary point $x_{0}\in S^{1}$ denote by
$\Delta_{0}^{(n)}(x_{0})$ the closed interval with endpoints
$x_{0}$ and $x_{q_{n}}.$ For $n$ odd $x_{q_{n}}$ is to the left of $x_{0}$, for $n$ even it is to the right. Denote by $\Delta_{i}^{(n)}(x_{0})$ the iterates of the
interval $\Delta_{0}^{(n)}(x_{0})$ under $T_{f}$:
$\Delta_{i}^{(n)}(x_{0})=T^{i}_f\Delta_{0}^{(n)}(x_{0}),i\ge1$.

It is well known (since Denjoy) that the system of intervals  
\begin{equation}\label{eq2}
\xi_{n}(x_{0})=\left\lbrace \Delta_{i}^{(n-1)}(x_{0}), \  0\leq i<q_{n}; \    \Delta_{j}^{(n)}(x_{0}), \ 0\leq j<q_{n-1}\right\rbrace 
\end{equation}
cover the whole circle and that their interiors are mutually disjoint.
The partition $\xi_{n}(x_{0})$ is called the $n$-th \textbf{dynamical partition} of the point 
$x_{0}$ with \textbf{generators} $\Delta_{0}^{(n-1)}(x_{0})$ and  $\Delta_{0}^{(n)}(x_{0})$. We briefly recall the structure of the dynamical partitions. The  passage from $\xi_{n}(x_{0})$ to $\xi_{n+1}(x_{0})$ is simple:
all intervals of order $n$ are preserved and each of the intervals $ \Delta_{i}^{(n-1)}(x_{0}), \  0\leq
i<q_{n},$ is divided into $k_{n+1}+1$ intervals:
\begin{eqnarray*}
\Delta_{i}^{(n-1)}(x_{0})=\Delta_{i}^{(n+1)}(x_{0})\cup\bigcup_{s=0}^{k_{n+1}-1}\Delta_{i+q_{n-1}+sq_{n}}^{(n)}(x_{0}).
\end{eqnarray*}
The following Lemma plays a key role for studying metrical properties of the homeomorphism $T_{f}$.
\begin{lemm}\label{lem2.1} Let $T_{f}$ be a $P$- circle homeomorphism with a finite number
 of break points $z^{(i)},$ $i=1,2,...,m$ and irrational rotation number $\rho_{f}$.
If $x_{0}\in S^{1},$ $ n\ge 1$ 
and  $z^{(i)}\notin\left\lbrace  T^{i}x_{0}, 0\leq
i<q_{n},\right\rbrace $ then 
\begin{equation}\label{eq3} 
e^{-v}\leq \prod_{i=0}^{q_{n}-1}DT_{f}^{i}(x_{0})\leq e^{v},
\end{equation}
where $v=Var(logDf)$.
\end{lemm}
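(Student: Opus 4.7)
By the chain rule, $\prod_{i=0}^{q_n-1} Df(T_f^i x_0) = D(T_f^{q_n})(x_0)$, so the lemma is the classical Denjoy-type distortion bound $|\log D(T_f^{q_n})(x_0)| \le v$. I would prove this in two stages: a bounded-distortion estimate followed by an absolute calibration.

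The first stage exploits the geometric input from (\ref{eq2}): the intervals $\Delta_i^{(n-1)}(x_0) = T_f^i\Delta_0^{(n-1)}(x_0)$, $0 \le i < q_n$, are $q_n$ pairwise interior-disjoint members of the dynamical partition $\xi_n(x_0)$. For any $y \in \Delta_0^{(n-1)}(x_0)$, the chain rule yields
$$\log D(T_f^{q_n})(x_0) - \log D(T_f^{q_n})(y) = \sum_{i=0}^{q_n-1}\bigl[\log Df(T_f^i x_0) - \log Df(T_f^i y)\bigr],$$
and since both $T_f^i x_0$ and $T_f^i y$ lie in $\Delta_i^{(n-1)}(x_0)$, each summand is bounded in absolute value by $\mathrm{Var}(\log Df; \Delta_i^{(n-1)}(x_0))$. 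Disjointness of the $\Delta_i^{(n-1)}(x_0)$ allows one to conclude
$$\bigl|\log D(T_f^{q_n})(x_0) - \log D(T_f^{q_n})(y)\bigr| \le \sum_{i=0}^{q_n-1}\mathrm{Var}(\log Df; \Delta_i^{(n-1)}(x_0)) \le \mathrm{Var}(\log Df; S^1) = v.$$
The hypothesis that no break point lies in the orbit $\{T_f^i x_0 : 0 \le i < q_n\}$ is used precisely to ensure each $Df(T_f^i x_0)$ is well defined.

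The second stage converts this distortion estimate into an absolute bound by choosing $y$ appropriately. Applying the mean value theorem to the piecewise-$C^1$ homeomorphism $T_f^{q_n}: \Delta_0^{(n-1)}(x_0) \to \Delta_{q_n}^{(n-1)}(x_0)$ (subdivided at any interior break points) produces $y^* \in \Delta_0^{(n-1)}(x_0)$ with $D(T_f^{q_n})(y^*) = |\Delta_{q_n}^{(n-1)}(x_0)|/|\Delta_0^{(n-1)}(x_0)|$, giving bounds on $D(T_f^{q_n})(x_0)$ up to this ratio of generator lengths. The principal obstacle is to control this ratio: since $\Delta_{q_n}^{(n-1)}(x_0) = \Delta_0^{(n-1)}(x_{q_n})$ is the generator of the same combinatorial type at the nearby orbit point $x_{q_n}$, Poincar\'e's combinatorial theorem together with the tiling structure $\sum_{i=0}^{q_n-1}|\Delta_i^{(n-1)}(x_0)| + \sum_{j=0}^{q_{n-1}-1}|\Delta_j^{(n)}(x_0)| = 1$ should allow one to absorb this ratio into the variation bound and obtain the clean conclusion $e^{-v} \le D(T_f^{q_n})(x_0) \le e^v$. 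The bounded-distortion estimate of the first stage is routine; it is this final combinatorial calibration step that is the delicate part of the argument.
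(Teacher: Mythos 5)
Your first stage (the bounded-distortion estimate) is correct and routine: since the intervals $\Delta_i^{(n-1)}(x_0)$, $0\le i<q_n$, have pairwise disjoint interiors, the telescoped chain-rule difference $\sum_{i}|\log Df(T_f^i x_0)-\log Df(T_f^i y)|$ is controlled by the total variation $v$ for any $y\in\Delta_0^{(n-1)}(x_0)$. The paper, incidentally, gives no proof of Lemma~\ref{lem2.1}; it only cites \cite{KS1989}, so there is nothing to compare against line by line. The trouble lies entirely in what you call the ``calibration'' step, and here the proposal has a genuine gap, which you yourself flag (``it is this final combinatorial calibration step that is the delicate part'') without resolving.

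Concretely: the mean value theorem gives a $y^*$ with $Df^{q_n}(y^*)=l(\Delta_{q_n}^{(n-1)}(x_0))/l(\Delta_0^{(n-1)}(x_0))$, and you then need this ratio to be close to $1$. But observe that $\Delta_{q_n}^{(n-1)}(x_0)=T_f^{q_n}\Delta_0^{(n-1)}(x_0)$, so the assertion that this ratio lies in $[e^{-v},e^v]$ is exactly the content of Corollary~\ref{cor2.2} --- which the paper \emph{derives from} Lemma~\ref{lem2.1}, not the other way around. Invoking Poincar\'e's ordering theorem and the identity $\sum_{i<q_n}l(\Delta_i^{(n-1)})+\sum_{j<q_{n-1}}l(\Delta_j^{(n)})=1$ gives no handle on this one ratio: for the rigid rotation the ratio equals $1$ identically, but for a general $P$-homeomorphism it need not, and nothing in the tiling combinatorics alone forces a bound independent of $n$. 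Moreover, even granting such a bound by $e^{\pm v}$, composing it with your first-stage estimate would produce $e^{\pm 2v}$, not the claimed $e^{\pm v}$. The classical Denjoy argument does not proceed by bounding this ratio separately; it makes a different choice of comparison interval and exploits the telescoping of $\prod Df(\theta_i)=l(T_f^{q_n}J)/l(J)$ simultaneously with the variation bound, so that the ratio term is cancelled rather than estimated. As written, your proposal identifies the right obstacle but does not overcome it.
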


Inequality (\ref{eq3}) is called the \textbf{Denjoy inequality}.
The proof of Lemma \ref{lem2.1}  is just like in the case of  diffeomorphisms (see for instance \cite{KS1989}).
Using  Lemma \ref{lem2.1} it can be shown easily that the lenghts of the intervals of the dynamical partition $\xi_{n}$ in (\ref{eq2}) are exponentially small:

\begin{cor}\label{cor2} Let $\Delta^{(n)}$ be an arbitrary element
of the dynamical partition $\xi_{n}(x_{0})$. Then
\begin{equation}\label{eq4}
l(\Delta^{(n)}) \leq const \,\, \lambda^{n},
\end{equation}
where $\lambda=(1+e^{-v})^{-1/2}<1.$
\end{cor}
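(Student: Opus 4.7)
The plan is to obtain exponential decay for the lengths of the two generators $\Delta_0^{(n-1)}(x_0)$ and $\Delta_0^{(n)}(x_0)$ and then extend the bound to every element of $\xi_n(x_0)$ by the Denjoy inequality of Lemma \ref{lem2.1}. Any $\Delta\in\xi_n(x_0)$ is of the form $T_f^i\Delta_0^{(n-1)}(x_0)$ with $0\le i<q_n$ or $T_f^j\Delta_0^{(n)}(x_0)$ with $0\le j<q_{n-1}$, so Lemma \ref{lem2.1} together with the mean value theorem yields
\[
l(\Delta)\le e^v\,\max\bigl(l(\Delta_0^{(n-1)}(x_0)),\,l(\Delta_0^{(n)}(x_0))\bigr),
\]
and the problem reduces to proving $b_n:=l(\Delta_0^{(n)}(x_0))\le C\mu^n$ with some $\mu<1$.

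For the contraction, I keep only the first two terms of the refinement rule recalled just before the lemma:
\[
\Delta_0^{(n-1)}(x_0)\supset \Delta_0^{(n+1)}(x_0)\cup \Delta_{q_{n-1}}^{(n)}(x_0),
\]
the union being disjoint, so $b_{n-1}\ge b_{n+1}+l(\Delta_{q_{n-1}}^{(n)}(x_0))$. Since $\Delta_{q_{n-1}}^{(n)}(x_0)=T_f^{q_{n-1}}\Delta_0^{(n)}(x_0)$, Lemma \ref{lem2.1} applied at almost every $y\in\Delta_0^{(n)}(x_0)$ with orbit length $q_{n-1}$ gives $DT_f^{q_{n-1}}(y)\ge e^{-v}$, hence $l(\Delta_{q_{n-1}}^{(n)}(x_0))\ge e^{-v}b_n$. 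This produces the key recursion
\[
b_{n-1}\ge b_{n+1}+e^{-v}b_n.
\]
Applying it once more with $n-1$ in place of $n$ and substituting the consequence $b_{n-1}\ge e^{-v}b_n$ into it gives
\[
b_{n-2}\ge b_n+e^{-v}b_{n-1}\ge (1+e^{-2v})\,b_n,
\]
which on iteration in $n$ yields $b_n\le C\mu^n$ with $\mu=(1+e^{-2v})^{-1/2}<1$, and combined with the first paragraph proves the corollary with any such $\mu$. The sharper rate $\lambda=(1+e^{-v})^{-1/2}$ stated in the corollary should follow from slightly more careful bookkeeping of the full decomposition of $\Delta_0^{(n-1)}$, but only the qualitative exponential decay $\lambda<1$ is needed in the sequel.

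The main technical point, which is the only place where care is required, is the passage from the pointwise Denjoy bound of Lemma \ref{lem2.1} to the estimate $l(\Delta_{q_{n-1}}^{(n)}(x_0))\ge e^{-v}b_n$ on an interval. This is harmless because Lemma \ref{lem2.1} applies at any $y$ whose orbit segment $\{T_f^jy:0\le j<q_{n-1}\}$ avoids the finite set $BP(f)$, and the set of exceptional $y\in\Delta_0^{(n)}(x_0)$ is contained in the countable union $\bigcup_{i,j}T_f^{-j}(z^{(i)})$; integrating the pointwise bound over its full-measure complement in $\Delta_0^{(n)}(x_0)$ gives exactly what is needed. All remaining steps are routine manipulations of the recursion above.
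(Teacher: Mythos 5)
The core of your argument --- the recursion $b_{n-1}\ge b_{n+1}+e^{-v}b_n$ obtained from the refinement rule and the Denjoy inequality applied at time $q_{n-1}$ --- is sound, and the measure-zero caveat about orbit segments passing through break points is handled correctly. The paper gives no proof of this corollary, so there is no "paper route" to compare against, but your proof has one genuine gap and one acknowledged shortfall.

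The gap is in the very first reduction. You claim that any $\Delta=T_f^i\Delta_0^{(n-1)}(x_0)$ with $0\le i<q_n$ satisfies $l(\Delta)\le e^v\,\max(b_{n-1},b_n)$ "by Lemma \ref{lem2.1} together with the mean value theorem." This does not follow: Lemma \ref{lem2.1} controls only the derivative of $T_f^{q_n}$ (the full orbit product), not $Df^i$ for intermediate $0<i<q_n$, and there is no uniform bound on $l(T_f^i I)/l(I)$ for such $i$. Indeed, the lengths $l(\Delta_i^{(n-1)}(x_0))$ are not in general $e^v$-comparable to $l(\Delta_0^{(n-1)}(x_0))$. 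The correct way to reduce the corollary to the generators is to notice that $\Delta_i^{(n-1)}(x_0)=\Delta_0^{(n-1)}(T_f^ix_0)$ and $\Delta_j^{(n)}(x_0)=\Delta_0^{(n)}(T_f^jx_0)$, so every element of $\xi_n(x_0)$ is itself a generator of the dynamical partition at a shifted base point. Your recursion is entirely uniform in the base point (the Denjoy bound depends only on $v$, and $b_0(y),b_1(y)\le 1$ for all $y$), so applying it at each base point $T_f^ix_0$ gives the desired bound; no comparability factor $e^v$ is needed or available.

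The second issue is minor: your iteration yields $\mu=(1+e^{-2v})^{-1/2}$ rather than the stated $\lambda=(1+e^{-v})^{-1/2}$, because in $b_{n-2}\ge b_n+e^{-v}b_{n-1}$ you bound $b_{n-1}$ only by $e^{-v}b_n$. The sharper rate can be recovered, for instance, by considering $S_n=b_n+b_{n+1}$ and showing $S_n\ge(1+e^{-v})S_{n+2}$, but as you correctly note, only some $\lambda<1$ is used in the sequel, so this is harmless.
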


\begin{defi}\label{defi3} Two homeomorphisms $T_{1}$ and $T_{2}$ of
the circle are said to be topologically equivalent if there exists
a homeomorphism $T_{\varphi}:S^{1}\rightarrow S^{1}$ such that
$T_{\varphi}(T_{1}(x))=T_{2}(T_{\varphi(x)})$ for any  $x\in S^{1}$.We call the homeomorphism $T_{\varphi}$ a conjugating map.
\end{defi}
From Corollary \ref{cor2} it follows that the trajectory of each point  is dense in $S^{1}$. 
This together with the monotonicity of the 
homeomorphism $T_{f}$ implies the following generalization of the classical Denjoy theorem. \\

\begin{theo}\label{Denjoy} Suppose that a homeomorphism $T_f$ with
an irrational rotation number $\rho$ satisfies the conditions of Lemma \ref{lem2.1}.Then $T_f$ is topologically equivalent to
the linear rotation $T_{\rho}$.
\end{theo}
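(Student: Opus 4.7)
The plan is to adapt the classical Denjoy argument, using the ingredients already established in the excerpt: Poincaré's theorem on the cyclic order of orbits (cited just before the dynamical partitions are introduced) and Corollary \ref{cor2}, which yields density of every $T_f$-orbit. These two facts together with the monotonicity of $T_f$ will let me construct the conjugating homeomorphism $T_\varphi$ directly on an orbit and then extend it by continuity.

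First I would fix a point $x_0\in S^1$ and consider simultaneously the two orbits
\[
\mathcal{O}_f(x_0)=\{x_i=T_f^{\,i}x_0:i\in\mathbb Z\},\qquad \mathcal{O}_\rho(x_0)=\{\bar x_i=\{x_0+i\rho\}:i\in\mathbb Z\}.
\]
By Poincaré's theorem, for any $i,j\in\mathbb Z$ the cyclic order of $x_i,x_j,x_k$ on $S^1$ coincides with the cyclic order of $\bar x_i,\bar x_j,\bar x_k$, since this order depends only on the rotation number $\rho$. I would then define $\phi:\mathcal{O}_f(x_0)\to\mathcal{O}_\rho(x_0)$ by $\phi(x_i)=\bar x_i$; this map is a well-defined, order-preserving bijection between the two orbits. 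Moreover it intertwines the dynamics by construction, since $\phi(T_f x_i)=\phi(x_{i+1})=\bar x_{i+1}=T_\rho \bar x_i=T_\rho\phi(x_i)$.

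Next I would invoke Corollary \ref{cor2}: every point of $S^1$ lies in an element of the dynamical partition $\xi_n(x_0)$, whose length is bounded by $\mathrm{const}\cdot\lambda^n$ with $\lambda<1$. Since the endpoints of these partition intervals are themselves iterates $x_j$, this forces $\mathcal{O}_f(x_0)$ to be dense in $S^1$. Irrationality of $\rho$ gives the corresponding density of $\mathcal{O}_\rho(x_0)$. With both sets dense and $\phi$ order-preserving between them, $\phi$ extends uniquely to a monotone surjection $T_\varphi:S^1\to S^1$ by the usual prescription $T_\varphi(y)=\sup\{\bar x_i:x_i\le y\}$ (on the universal cover, working with the lifts). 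Monotonicity plus density of image forces continuity; monotonicity plus injectivity on a dense set forces injectivity; hence $T_\varphi$ is a homeomorphism. The intertwining relation $T_\varphi\circ T_f=T_\rho\circ T_\varphi$ holds on the dense set $\mathcal{O}_f(x_0)$ and therefore on all of $S^1$ by continuity.

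The only non-trivial step is ensuring density of the $T_f$-orbit, which is precisely where Corollary \ref{cor2} (and hence the Denjoy inequality of Lemma \ref{lem2.1}) is essential; without the uniform bounds on the distortion of iterates of $T_f$ one could have wandering intervals and the extension would fail. Once density is in hand, the extension from an order isomorphism between two dense orbits to a homeomorphism of $S^1$ is routine and the conjugacy identity transfers from the dense set by continuity.
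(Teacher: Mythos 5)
Your proof is correct and follows exactly the route the paper indicates: Corollary~\ref{cor2} yields density of every $T_f$-orbit, and the standard extension of the order isomorphism $x_i\mapsto\bar x_i$ between the $T_f$-orbit and the rigid-rotation orbit then produces the conjugating homeomorphism $T_\varphi$. The paper leaves this as a one-line remark preceding the theorem (``From Corollary~\ref{cor2} it follows that the trajectory of each point is dense\dots This together with the monotonicity of the homeomorphism $T_f$ implies\dots''), so your write-up simply fills in the details of precisely the argument the authors intend.
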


\begin{defi}(see \cite{KO1989})\label{defKO} An interval $I=(\tau,t)\subset S^1$ is $q_{n}$-small
and its endpoints $\tau,t$ are $q_{n}$-close if the system of
intervals $T_{f}^{i}(I),\ 0\leq i<q_{n}$ are disjoint.
\end{defi}

It is known that the interval $(\tau,t)$ is $q_{n}$-small
if, depending on the parity of n, either $t\leq\tau\leq
T_{f}^{q_{n-1}}(t)\ \text{or} \ T_{f}^{q_{n-1}}(\tau)\leq
t\leq\tau.$\\

\begin{defi}\label{Def2.3} {\rm 
Let $C>1$ . We call two intervals of $S^{1}$ \textbf{C-comparable } if the ratio of their lengts is in $[C^{-1}, C].$}
\end{defi}
Lemma \ref{lem2.1} then implies
\begin{cor}\label{cor2.2} Suppose that a homeomorphism $T_f$ with
an irrational rotation number $\rho$ satisfies the conditions of 
Lemma \ref{lem2.1}. Then for any interval $I\subset S^{1}$ the intervals
$I$ and $T_{f}^{q_{n}}I$ are $e^{v}$-comparable. If the interval $I$ is
$q_{n}-$small then $l(T_{f}^{i}I)< const \, \lambda^{n} $ for all $i=1,2, ...,q_{n}-1.$
\end{cor}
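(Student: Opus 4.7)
I plan to deduce both claims directly from the Denjoy inequality (Lemma \ref{lem2.1}) and its consequence Corollary \ref{cor2}.

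For the comparability assertion, I would begin by writing
$$l(T_f^{q_n} I) = \int_I D(T_f^{q_n})(x)\, dx,$$
where the chain rule gives $D(T_f^{q_n})(x) = \prod_{i=0}^{q_n-1} DT_f(T_f^i x)$. Since $T_f$ has only finitely many break points, the set of $x \in I$ for which some $T_f^i x$ with $0 \le i < q_n$ coincides with a break point is finite. At every other $x \in I$, Lemma \ref{lem2.1} applied to the orbit of $x$ yields $e^{-v} \le D(T_f^{q_n})(x) \le e^v$. Integrating over $I$ then gives
$$e^{-v}\, l(I) \le l(T_f^{q_n} I) \le e^v\, l(I),$$
which is precisely $e^v$-comparability of $I$ and $T_f^{q_n} I$.

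For the second assertion, I would use the geometric characterization of $q_n$-smallness recalled just after Definition \ref{defKO}: if $I = (\tau,t)$ is $q_n$-small then, depending on the parity of $n$, one of its endpoints $y \in \{\tau,t\}$ satisfies that $I$ is contained in the interval $\Delta_0^{(n-1)}(y)$ with endpoints $y$ and $T_f^{q_{n-1}}(y)$. Since $T_f$ is an orientation preserving homeomorphism, applying $T_f^i$ preserves inclusion, so $T_f^i I \subset T_f^i \Delta_0^{(n-1)}(y) = \Delta_i^{(n-1)}(y)$. For every $0 \le i < q_n$ the interval $\Delta_i^{(n-1)}(y)$ is one of the elements of the dynamical partition $\xi_n(y)$, hence Corollary \ref{cor2} yields
$$l(T_f^i I) \le l(\Delta_i^{(n-1)}(y)) \le const \cdot \lambda^n,$$
which is the desired bound.

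Neither step really contains a main obstacle; both reduce essentially to results already established in the excerpt. The only points requiring a moment of care are the measure-theoretic reading of the Denjoy inequality (it is used at a.e.\ point of $I$, not merely at a single point, so one must invoke that only finitely many bad $x$ occur), and the correct identification of $I$ as a subset of a generator $\Delta_0^{(n-1)}(y)$, which is a direct consequence of the geometric description of $q_n$-smallness.
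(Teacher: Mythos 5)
Your proposal is correct and supplies precisely the argument the paper leaves implicit: the corollary is stated in the text with only the phrase ``Lemma \ref{lem2.1} then implies'' and no written proof. Your two steps — integrating the a.e.\ Denjoy bound $e^{-v}\le Df^{q_n}(x)\le e^{v}$ over $I$ (valid since a $P$-homeomorphism with finitely many break points is Lipschitz, hence absolutely continuous, so $l(T_f^{q_n}I)=\int_I Df^{q_n}\,dx$) for the comparability claim, and enclosing $T_f^{i}I$ inside the partition element $\Delta_i^{(n-1)}(y)\in\xi_n(y)$ and invoking Corollary \ref{cor2} for the length bound — are exactly the intended deductions.
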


\begin{lemm}\label{lemm2.2} Suppose, that a homeomorphism $T_f$ with
an irrational rotation number $\rho$ satisfies the conditions of 
Lemma \ref{lem2.1} and $x,y\in S^{1}$ are $q_{n}$-close. Then for any $0\leq l \leq q_{n}$ the following
inequality holds:
\begin{equation}\label{eq5}
e^{-v}\leq \frac {Df^l(x)}{Df^l(y)}\leq e^{v}.
\end{equation}
\end{lemm}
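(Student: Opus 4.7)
The plan is to reduce the multiplicative estimate (\ref{eq5}) to an additive bound via the chain rule. Writing $Df^{l}(u)=\prod_{i=0}^{l-1}Df(T_{f}^{i}u)$ and taking logarithms, the inequality to be proved becomes
$$\Bigl|\sum_{i=0}^{l-1}\bigl(\log Df(T_{f}^{i}x)-\log Df(T_{f}^{i}y)\bigr)\Bigr|\le v,$$
after which exponentiation yields (\ref{eq5}).

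For each index $i$ I would bound the individual summand by the total variation of $\log Df$ on the closed interval $J_{i}$ with endpoints $T_{f}^{i}x$ and $T_{f}^{i}y$, via the elementary estimate $|\log Df(a)-\log Df(b)|\le \mathrm{Var}(\log Df;[a,b])$. The decisive observation is that $J_{i}=T_{f}^{i}J_{0}$, and by hypothesis $x,y$ are $q_{n}$-close, which means $J_{0}$ is $q_{n}$-small in the sense of Definition \ref{defKO}; equivalently, the iterates $T_{f}^{i}J_{0}$ for $0\le i<q_{n}$ are pairwise disjoint subintervals of $S^{1}$. Since $l\le q_{n}$, the intervals $J_{0},\dots,J_{l-1}$ are disjoint.

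Combining the two ingredients, a sum of variations over disjoint subintervals is bounded by the variation over the whole circle, so
$$\sum_{i=0}^{l-1}\mathrm{Var}(\log Df;J_{i})\le \mathrm{Var}(\log Df;S^{1})=v,$$
which gives the claim after exponentiation.

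The only point requiring care is the presence of the break points: $\log Df$ has jumps there, but these jumps are already included in the total variation $v=\mathrm{Var}(\log Df)$, and the hypothesis inherited from Lemma \ref{lem2.1} guarantees that no break point is hit along the orbit, so that each $Df(T_{f}^{i}x)$ and $Df(T_{f}^{i}y)$ is well defined. I do not anticipate a serious obstacle: this is essentially the refined distortion version of the Denjoy inequality, proved by exactly the disjointness-of-iterates argument used for Lemma \ref{lem2.1} itself, upgraded from a single orbit to a pair of $q_{n}$-close orbits.
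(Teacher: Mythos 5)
Your proof is correct and follows essentially the same route as the paper: take logarithms via the chain rule, bound each term $|\log Df(T_f^i x)-\log Df(T_f^i y)|$ by the variation of $\log Df$ over the interval $T_f^i(I)$, and use the pairwise disjointness of these iterates (the $q_n$-small property) to bound the total sum by $v=\mathrm{Var}(\log Df)$. The paper's proof is just a terser statement of the same Denjoy-type distortion argument.
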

\begin{proof} Take any two $q_{n}$-close points $x,y\in S^{1}$ 
and $0\leq l\leq\ q_{n}-1$.
Denote by $I$ the open interval with endpoints $x$ and $y$.
Because the intervals $T_{f}^{i}(I),\ 0\leq i<q_{n}$ are disjoint, we obtain
\begin{displaymath}
|log Df^{q_n}(x)-log Df^{q_n}(y)|\le \sum_{s=0}^{q_{n}-1}|log f(T_{f}^{s}x)-log f(T_{f}^{s}y)|\le v,
\end{displaymath}
from which inequality (\ref{eq5}) follows immediately.
\end{proof}
Note that P-homeomorphisms $T_f$ satisfying the conditions of Lemma \ref{lem2.1} are ergodic with respect to Lebesgue measure , i.e. every $T_f$-invariant set has measure zero or one.

\section{Cross-ratio tools}

\begin{defi}\label{crossratio} {\rm The \textbf{cross-ratio } $Cr(z_1,z_2,z_3,z_4)$ of four points $z_i\in \mathbb{R}^{1}$, $i=1,2,3,4$, $z_1<z_2<z_3<z_4$ is defined as
$$
Cr(z_1,z_2,z_3,z_4)=\frac{(z_2-z_1)(z_4-z_3)}{(z_3-z_1)(z_4-z_2)}.
$$}
\end{defi}
\begin{defi}\label{distortion} {\rm
The \textbf{cross-ratio distortion} $Dist(z_1,z_2,z_3,z_4;f)$ of four points $z_i\in \mathbb{R}^{1}$, $i=1,2,3,4$, $z_1<z_2<z_3<z_4$ with respect to a strictly increasing
function $f$ on $ \mathbb{R}$ is defined as
$$
Dist(z_1,z_2,z_3,z_4;f)=\frac{Cr(f(z_1),f(z_2),f(z_3),f(z_4))}{Cr(z_1,z_2,z_3,z_4)}.
$$}
\end{defi}

Consider then a function $f:[a,b]\rightarrow R^{1}$, $[a,b]\subset S^{1}$ satisfying the following conditions:
\begin{enumerate}
\item[{\rm(i)}]
 $f \in C^{1}([a,b])$, $Df(x)\ge const >0$, $\forall x\in [a,b]$;
\item[{\rm(ii)}]
  $D^{2}f\in L^{1}([a,b])$.
\end{enumerate}

Fix an arbitrary $\varepsilon >0$. Since $D^{2}f\in L^{1}([a,b])$, it can be written in the form
\begin{equation}\label{eq6}
D^{2}f(x)=g_{\varepsilon}(x)+\theta_{\varepsilon}(x),\ x\in [a,b],
\end{equation}
where $g_{\varepsilon}$ is a continuous function on $[a,b]$ and $\Vert\theta_{\varepsilon}\Vert_{L^{1}}<\varepsilon$.

\begin{theo}\label{theo3.1}
 Suppose,  the function $f=f(x)$ satisfies the above
conditions ${i}),{ii})$. For $z_i\in [a,b]$, \ $i=1,2,3,4$, with 
$z_1<z_2<z_3<z_4$, the following estimate holds:

\begin{eqnarray*}
\label{eq6a}
|Dist(z_1,z_2,z_3,z_4;f)-1|&\leq& C_1|z_4-z_1|\underset{x,t\in
[a,b]}{\max}|g_{\varepsilon}(x)-g_{\varepsilon}(t)|+\notag \\
&+&C_1\overset{z_1}{\underset{z_2}{\int}}|\theta_{\varepsilon}(y)|dy+
C_{1}\Big(\overset{z_2}{\underset{z_1}{\int}}|D^{2}f(y)|dy\Big)^{2}
\end{eqnarray*}

where the constant $C_{1}>0$ depends only on the function $f$.\\
\end{theo}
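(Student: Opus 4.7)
The plan is to Taylor-expand $f$ around $z_1$ and exploit the fact that the cross-ratio is invariant under affine transformations, so that $Dist(z_1,z_2,z_3,z_4;f) - 1$ only sees second-order information about $f$.

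First I would set $p := Df(z_1)$ and $D_{ij} := z_j - z_i$, and write by the fundamental theorem of calculus and Fubini
$$f(z_j) - f(z_i) = p\,D_{ij} + \tilde q_{ij}, \qquad \tilde q_{ij} := \int_{z_i}^{z_j}\!\int_{z_1}^{y} D^2 f(s)\,ds\,dy.$$
Substituting into $Dist - 1 = (\mathrm{Num}-\mathrm{Den})/\mathrm{Den}$ and expanding, the $p^2$-terms in $\mathrm{Num}$ and $\mathrm{Den}$ cancel identically (reflecting the affine-invariance of $Cr$), leaving the linear-in-$p$ term $p\,D_{12}D_{34}D_{13}D_{24}\,\Sigma$ where
$$\Sigma := \frac{\tilde q_{12}}{D_{12}} + \frac{\tilde q_{34}}{D_{34}} - \frac{\tilde q_{13}}{D_{13}} - \frac{\tilde q_{24}}{D_{24}},$$
together with a purely quadratic tail $\tilde q_{12}\tilde q_{34}D_{13}D_{24} - \tilde q_{13}\tilde q_{24}D_{12}D_{34}$.

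Next I would apply Fubini once more to rewrite $\Sigma = \int_{z_1}^{z_4} K(s)\,D^2 f(s)\,ds$ with an explicit continuous, piecewise-linear kernel: on $[z_1,z_2]$, $K(s) = \frac{z_2-s}{D_{12}} - \frac{z_3-s}{D_{13}}$; on $[z_2,z_3]$, $K(s) = 1 - \frac{z_3-s}{D_{13}} - \frac{z_4-s}{D_{24}}$; and on $[z_3,z_4]$, $K(s) = (z_4-s)\bigl(\frac{1}{D_{34}} - \frac{1}{D_{24}}\bigr)$. A direct check yields $\|K\|_\infty \le 1$, $K(z_1)=K(z_4)=0$, and crucially $\int_{z_1}^{z_4} K(s)\,ds = 0$; the last identity is forced by the observation that when $D^2 f$ is a constant, $Df$ is affine, whence each average $\bar{Df}_{ij} := (f(z_j)-f(z_i))/D_{ij}$ is affine in $z_i+z_j$, and then $\bar{Df}_{12}+\bar{Df}_{34}-\bar{Df}_{13}-\bar{Df}_{24}$ vanishes by the trivial identity $(z_1+z_2)+(z_3+z_4)-(z_1+z_3)-(z_2+z_4)=0$.

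Then I would insert the decomposition $D^2 f = g_\varepsilon + \theta_\varepsilon$ into $\Sigma$. The mean-zero property of $K$ lets me substitute $\int K g_\varepsilon = \int K(g_\varepsilon - g_\varepsilon(s_0))$ for an arbitrary $s_0 \in [a,b]$, which gives
$$\left|\int_{z_1}^{z_4}\!K g_\varepsilon\right| \le (z_4-z_1)\max_{x,t\in[a,b]}|g_\varepsilon(x)-g_\varepsilon(t)|, \qquad \left|\int_{z_1}^{z_4}\!K\theta_\varepsilon\right| \le \int_{z_1}^{z_4}|\theta_\varepsilon|.$$
The quadratic tail is controlled via $|\tilde q_{ij}/D_{ij}| \le \int_{z_1}^{z_4}|D^2 f|$, producing the $\bigl(\int|D^2 f|\bigr)^2$ contribution, while $\mathrm{Den} \ge c_1^2\,D_{12}D_{34}D_{13}D_{24}$ (from $Df \ge c_1 > 0$) absorbs into the constant $C_1$. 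The main obstacle I anticipate is the kernel computation: verifying the explicit piecewise formula for $K$ and its two crucial properties $\|K\|_\infty \le 1$ and $\int K = 0$ requires a careful case analysis on the three subintervals, but once it is in place the remainder of the argument is a routine application of the triangle inequality.
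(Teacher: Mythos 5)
Your proof is correct, and it takes a genuinely different technical route from the paper's. The paper's argument Taylor-expands $f$ at $z_1$ and at $z_4$ separately, factors the distortion as $\frac{1+A(z_1,z_2)}{1+A(z_1,z_3)}\cdot\frac{1-B(z_3,z_4)}{1-B(z_2,z_4)}$, log-linearizes, and then proves a separate Lemma (\ref{lemm3.1}) showing that $A(a,b)$ and $B(a,b)$ each equal $\int_a^b \frac{D^2f}{2Df}$ up to an error $G_i(a,b)$; the cancellation of the leading terms is then the additivity identity $\int_{z_1}^{z_2}-\int_{z_1}^{z_3}+\int_{z_2}^{z_4}-\int_{z_3}^{z_4}=0$, and the decomposition $D^2f=g_\varepsilon+\theta_\varepsilon$ enters inside the proof of that lemma via the identity $\int_a^b\bigl(\frac{b-y}{b-a}-\frac12\bigr)dy=0$. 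You instead expand only at $z_1$, work directly with $\mathrm{Num}-\mathrm{Den}$, observe the $p^2$ cancellation (which is the affine invariance the paper encodes by dividing by $Df(z_1)$ and $Df(z_4)$), and package the entire first-order contribution into a single explicit kernel $K$ with $\int_{z_1}^{z_4} K=0$ and $\|K\|_\infty\le1$. This replaces the paper's auxiliary lemma with a cleaner, self-contained mean-zero kernel computation and avoids the intermediate quantity $D^2f/(2Df)$ altogether; the price is that you must verify the piecewise formula for $K$ and its two properties, which you correctly flag as the main computational burden. I checked your kernel on all three pieces, its continuity at $z_2,z_3$, the bound $\|K\|_\infty\le1$, and the mean-zero identity (your affine-invariance argument is a neat way to avoid integrating $K$ directly); and your bounds $|\tilde q_{ij}/D_{ij}|\le\int_{z_1}^{z_4}|D^2f|$ and $\mathrm{Den}\ge c_1^2\,D_{12}D_{34}D_{13}D_{24}$ give exactly the quadratic tail and the absorption of the $p/Df$ factors into $C_1$. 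One small remark: the integration limits $\int_{z_2}^{z_1}|\theta_\varepsilon|$ and $\bigl(\int_{z_1}^{z_2}|D^2f|\bigr)^2$ in the stated theorem appear to be typos for $\int_{z_1}^{z_4}$ (this is what both the paper's proof and its later use in Lemma \ref{lemm4.5} actually deliver); your argument correctly produces the $[z_1,z_4]$ version, so this is not a gap on your side.
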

\begin{proof} 
 Take  $z_i\in [a,b]\subset S^{1}$, \ $i=1,2,3,4$, with
$z_1<z_2<z_3<z_4$. The following equalities are easy to check:
\begin{eqnarray*}
f(z_k)-f(z_1)=Df(z_1)(z_k-z_1)+\overset{z_k}{\underset{z_1}{\int}}D^{2}f(y)(z_k-y)dy,\
k=2,3;\\
f(z_4)-f(z_l)=Df(z_4)(z_4-z_l)-\overset{z_4}{\underset{z_l}{\int}}D^{2}f(y)(y-z_l)dy,\
l=2,3.
\end{eqnarray*}
Using these relations we obtain:
\begin{eqnarray}\label{eq8}
&&Cr(f(z_1),f(z_2),f(z_3),f(z_4))=
\frac{f(z_2)-f(z_1)}{f(z_3)-f(z_1)}\cdot \frac{f(z_4)-f(z_3)}{f(z_4)-f(z_2)}=  \notag \\
&=& Cr(z_1,z_2,z_3,z_4)\cdot\frac{1+\frac{1}{Df(z_1)(z_2-z_1)}
\overset{z_2}{\underset{z_1}{\int}}D^{2}f(y)(z_2-y)dy}{1+\frac{1}{Df(z_1)(z_3-z_1)}
\overset{z_3}{\underset{z_1}{\int}}D^{2}f(y)(z_3-y)dy}\times  \\
&\times& \frac{1-\frac{1}{Df(z_4)(z_4-z_3)}
\overset{z_4}{\underset{z_3}{\int}}D^{2}f(y)(y-z_3)dy}{1-\frac{1}{Df(z_4)(z_4-z_2)}
\overset{z_4}{\underset{z_2}{\int}}D^{2}f(y)(y-z_2)dy}.\notag 
\end{eqnarray}
Setting\\
$
A(a,b)=\frac{1}{Df(a)(b-a)}
\overset{b}{\underset{a}{\int}}D^{2}f(y)(b-y)dy,
$\,\,\,\,and 
$B(a,b)=\frac{1}{Df(b)(b-a)}
\overset{b}{\underset{a}{\int}}D^{2}f(y)(y-a)dy
$\\
we can hence rewrite  $Dist(z_1,z_2,z_3,z_4;f)$ in the following form:

\begin{eqnarray*}
Dist(z_1,z_2,z_3,z_4;f)&=&\frac{1+A(z_1,z_2)}{1+A(z_1,z_3)}\times\frac{1-B(z_3,z_4)}{1-B(z_2,z_4)}=  \\
&=&(1+A(z_1,z_2))\cdot(1-A(z_1,z_3)+O(A^2(z_1,z_3))\cdot(1-B(z_3,z_4))\times  \\
&\times&(1+B(z_2,z_4)+O(B^2(z_2,z_4))).
\end{eqnarray*}
Therefore
\begin{eqnarray}\label{eq9}
Dist(z_1,z_2,z_3,z_4;f)&=&1+A(z_1,z_2)-A(z_1,z_3)+B(z_2,z_4)-B(z_3,z_4)+\\
&+&O\left(\Big(\overset{z_3}{\underset{z_2}{\int}}|D^{2}f(y)|dy\Big)^2
\right)\notag.
\end{eqnarray}
Set
$M_{1}=0.5\Big(\overset{}{\underset{x\in
(z_{1},z_{4})}{\inf }} Df(x)\Big)^{-1}. $\\

 To continue the proof of Theorem \ref{theo3.1} we need the following\\

\begin{lemm}\label{lemm3.1} Assume, that the function $f$ satisfies the
conditions of Theorem \ref{theo3.1}. Then for any $a,b\in S^{1}$, $a<b$ the
following identities hold:
$$A(a,b)=\overset{b}{\underset{a}{\int}}\frac{D^{2}f(y)}{2Df(y)}dy+G_1(a,b),\,\,\,\,
B(a,b)=\overset{b}{\underset{a}{\int}}\frac{D^{2}f(y)}{2Df(y)}dy+G_2(a,b),$$
where
\begin{eqnarray}\label{eq10}
|G_i(a,b)|\leq M_1(b-a)\underset{x,t\in
[a,b]}{\max}|g_{\varepsilon}(x)-g_{\varepsilon}(t)|+ \notag\\
+ M_1
\overset{b}{\underset{a}{\int}}|\theta_{\varepsilon}(y)|dy+
2M_1^{2}\left(\overset{b}{\underset{a}{\int}}|D^{2}f(y)|dy\right)^{2},\
i=1,2.
\end{eqnarray}
\end{lemm}
\begin{proof} We prove only the identity for $A(a,b)$, the one 
for $B(a,b)$ is similar. Set
$$G_1(a,b)=A(a,b)-\overset{b}{\underset{a}{\int}}\frac{D^{2}f(y)}{2Df(y)}dy.$$\\
It is clear that
\begin{eqnarray*}
|G_1(a,b)|&\le& \Big|A(a,b)-\overset{b}{\underset{a}{\int}}\frac{D^{2}f(y)}{2Df(a)}dy\Big|+
\Big|\overset{b}{\underset{a}{\int}}\frac{D^{2}f(y)}{2Df(a)}dy-\overset{b}
{\underset{a}{\int}}\frac{D^{2}f(y)}{2Df(y)}dy\Big|=\\
&=&\Big|A(a,b)-\overset{b}{\underset{a}{\int}}\frac{D^{2}f(y)}{2Df(A)}dy\Big|+
\frac{1}{2}\Big|\overset{b}{\underset{a}{\int}}\frac{D^{2}f(y)}{Df(y)Df(a)}dy
\overset{y}{\underset{a}{\int}}D^{2}f(t)dt\Big|.
\end{eqnarray*}
using this and the bound $(Df(x))^{-1} \leq 2\, M_1$ we get :
\begin{eqnarray}\label{eq11}
|G_1(a,b)| \leq \Big|A(a,b)-\overset{b}{\underset{a}{\int}}\frac{D^{2}f(y)}{2Df(a)}dy\Big|+2M_{1}^{2}
\Big(\overset{b}{\underset{a}{\int}}|D^{2}f(y)|^{2}dy\Big).
\end{eqnarray}

To get finally the estimate (\ref{eq10}) for $G_1(a,b)$ it is sufficient 
to estimate the difference
$A(a,b)-\overset{b}{\underset{a}{\int}}\frac{D^{2}f(y)}{2Df(a)}dy$.
Using the definition of $A(a,b)$ and the decomposition (\ref{eq6}) 
we obtain:
\begin{eqnarray*}
&&\Big|A(a,b)-\overset{b}{\underset{a}{\int}}\frac{D^{2}f(y)}{2Df(a)}dy\Big|
=\Big|\frac{1}{Df(a)}\overset{b}{\underset{a}{\int}}
D^{2}f(y)\left(\frac{b-y}{b-a}-\frac{1}{2}\right)dy\Big|=\\
&=&\frac{1}{Df(a)}\Big|\overset{b}{\underset{a}{\int}}
\Big(g_{\varepsilon}(y)+\theta_{\varepsilon}(y))
\left(\frac{b-y}{b-a}-\frac{1}{2}\right)dy\Big|
\leq\frac{1}{Df(a)}\Big|g_{\varepsilon}(a)\overset{b}{\underset{a}{\int}}
\left(\frac{b-y}{b-a}-\frac{1}{2}\right)dy\Big|+\\
&+&\frac{1}{Df(a)}\Big|\overset{b}{\underset{a}{\int}}|g_{\varepsilon}(y)-g_{\varepsilon}(a)|
\Big|\frac{b-y}{b-a}-\frac{1}{2}\Big|dy\Big|
+\frac{1}{Df(a)}\overset{b}{\underset{a}{\int}}
|\theta_{\varepsilon}(y)|\Big|\frac{b-y}{b-a}-\frac{1}{2}\Big|dy\leq\\
&\leq& M_1(b-a)\underset{x,t\in
[a,b]}{\max}|g_{\varepsilon}(x)-g_{\varepsilon}(t)|+M_1
\overset{b}{\underset{a}{\int}}|\theta_{\varepsilon}(y)|dy.
\end{eqnarray*}
Combining this  with the estimate (\ref{eq11})
we obtain the estimate (\ref{eq10}) for $G_1(a,b)$  in the Lemma..
\end{proof}
We can now finish the proof of Theorem \ref{theo3.1}. Combining
(\ref{eq9}) with the representations of $A(a,b)$ and $B(a,b)$ in Lemma \ref{lemm3.1} we obtain:\\

$Dist(z_1,z_2,z_3,z_4;f)=1+\overset{z_2}{\underset{z_1}{\int}}
\frac{D^{2}f(y)}{2Df(y)}dy+G_1(z_1,z_2)-\overset{z_3}{\underset{z_1}{\int}}
\frac{D^{2}f(y)}{2Df(y)}dy-G_1(z_1,z_3)$\\

$+\overset{z_4}{\underset{z_2}{\int}}
\frac{D^{2}f(y)}{2Df(y)}dy+G_2(z_3,z_4)
-\overset{z_4}{\underset{z_3}{\int}}
\frac{D^{2}f(y)}{2Df(y)}dy-G_2(z_3,z_4)
+O\left(\left(\overset{z_4}{\underset{z_1}{\int}}
|D^{2}f(y)|dy\right)^2\right)$\\

$=1+G_1(z_1,z_2)-G_1(z_1,z_3)+G_2(z_2,z_4)-G_2(z_3,z_4)+
+O\left(\left(\overset{z_4}{\underset{z_1}{\int}}
|D^{2}f(y)|dy\right)^2\right).$\\

Applying next (\ref{eq10}) for the intervals $[z_{s},z_{s+1}]\in [z_{1},z_{4}],$\,\,\,\,$ s=1,2,3 $ we obtain
$$
|G_1(z_{s},z_{s+1})|\leq \frac{M_1}{2}|z_4-z_1|\underset{[z_1,z_4]}{\max}
|g_{\varepsilon}(x)-g_{\varepsilon}(t)|
+\frac{M_1}{2}\overset{z_4}{\underset{z_1}{\int}}|\theta_{\varepsilon}(y)|dy+
\frac{M_1^2}{2}\overset{z_4}{\underset{z_1}{\int}}|f''(y)|dy.$$
from which the assertion of Theorem \ref{theo3.1} follows immedately.
\end{proof}

Next we consider the case when the interval $[z_1,z_4]$ contains just one
break point $x=x_b$. We estimate the distortion of the cross ratio when 
the break point lies outside the middle interval  $[z_2,z_3]$ i.e. $x_{b}\in [z_1,z_2]\cup[z_3,z_4] $.

For $z_i\in S^1$, $i=1,2,3,4$ with $z_1<z_2<z_3<z_4$ and $x_b\in [z_1,z_2]$ we set
\begin{equation}\label{eq12}
\alpha:=z_{2}-z_{1},\,\,   \beta:=z_{3}-z_{2},  \,\,\gamma:=z_{4}-z_{3},\,\, \tau:=z_{2}-x_{b},  \,\,\xi:=\frac{\beta}{\alpha},\,\,z:=\frac{\tau}{\alpha}.
\end{equation}

\begin{lemm}\label{lemm3.2} Assume, the function $f$ is defined 
on $[z_{1},z_{4})]$, its derivative
 $Df$ is continuous on every connected interval of 
the set  $[z_1,z_4]\backslash\{x_{b}\}$ and
$D^{2}f\in L^{1}[z_{1},z_{4})]$.  Choose $z_i\in S^1$, $i=1,2,3,4$, such that $z_1<z_2<z_3<z_4$
and $x_b\in[z_1,z_2]$. Then 
\begin{eqnarray}\label{eq13} &|Dist(z_1,z_2,z_3,z_4;f)-\frac{[\sigma(x_{b})+(1-\sigma(x_{b}))z](1+\xi)}{\sigma(x_{b})+(1-\sigma(x_{b}))z+\xi}|\leq
K_1 \overset{z_4}{\underset{z_1}{\int}}|D^{2}f(y)|dy,
\end{eqnarray}
where the constant $K_1>0$ depends only on the function $f$.
\end{lemm}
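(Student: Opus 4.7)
The plan is to compare the cross-ratio distortion of $f$ with that of an auxiliary piecewise linear model $\tilde f$ that has the breakpoint at $x_b$ and the same jump ratio $\sigma:=\sigma(x_b)=Df_-(x_b)/Df_+(x_b)$. More precisely, define $\tilde f$ so that $\tilde f(x)=f(x_b)+Df_-(x_b)(x-x_b)$ for $x\leq x_b$ and $\tilde f(x)=f(x_b)+Df_+(x_b)(x-x_b)$ for $x\geq x_b$. A direct algebraic computation (just plugging $\tilde f(z_i)$ into the definition of $Cr$, writing $u:=x_b-z_1=\alpha(1-z)$ and using $\alpha,\beta,\gamma,\xi,z$ as in (\ref{eq12})) shows that $Dist(z_1,z_2,z_3,z_4;\tilde f)$ is exactly the expression $\frac{[\sigma+(1-\sigma)z](1+\xi)}{\sigma+(1-\sigma)z+\xi}$ appearing in (\ref{eq13}). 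So the target inequality reduces to estimating $|Dist(\cdots;f)-Dist(\cdots;\tilde f)|$.

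Next I would write out the increments of $f$ by Taylor expansion with integral remainder, treating the pieces $[z_1,x_b]$ and $[x_b,z_k]$ separately so that the jump is carried exactly by the one-sided derivatives. This yields, for $k=2,3$,
\begin{equation*}
f(z_k)-f(z_1)=Df_-(x_b)(x_b-z_1)+Df_+(x_b)(z_k-x_b)+R_k,
\end{equation*}
where, by Fubini, $R_k=-\int_{z_1}^{x_b}D^2f(t)(t-z_1)dt+\int_{x_b}^{z_k}D^2f(t)(z_k-t)dt$ and hence $|R_k|\leq(z_k-z_1)\int_{z_1}^{z_k}|D^2f|$. Analogously, since $[z_l,z_4]$ contains no breakpoint for $l=2,3$, one gets $f(z_4)-f(z_l)=Df_+(x_b)(z_4-z_l)+S_l$ with $|S_l|\leq(z_4-z_l)\int_{x_b}^{z_4}|D^2f|$.

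Substituting into $Cr(f(z_1),\ldots,f(z_4))$, I would factor out $B_2:=Df_+(x_b)$ from every term. The ``principal'' factor reconstructs precisely $Cr(\tilde f(z_1),\ldots,\tilde f(z_4))$, and what remains is a product $(1+R_2/N_2)(1+S_3/M_3)/[(1+R_3/N_3)(1+S_2/M_2)]$, where $N_2=B_2\alpha[\sigma+(1-\sigma)z]$, $N_3=B_2\alpha[\sigma+(1-\sigma)z+\xi]$, $M_3=B_2\gamma$ and $M_2=B_2(\beta+\gamma)$. Expanding this product to first order in the small parameters gives $Dist(\cdots;f)=Dist(\cdots;\tilde f)(1+\varepsilon)$, with $\varepsilon$ a sum of the four ratios plus quadratic remainders of order $\bigl(\int_{z_1}^{z_4}|D^2f|\bigr)^2$, which is itself bounded by a constant times $\int_{z_1}^{z_4}|D^2f|$ because $D^2f\in L^1$ implies the integral is uniformly bounded.

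The main obstacle, and the only place where care is needed, is to verify that each first-order error ratio is controlled by $\int_{z_1}^{z_4}|D^2f|$ with a uniform constant \emph{regardless of how the lengths $\alpha,\beta,\gamma$ compare to one another}. This reduces to the geometric observation that $\sigma+(1-\sigma)z\geq\min(1,\sigma)>0$ for $z\in[0,1]$, which gives $N_2\gtrsim\alpha$ and $N_3\gtrsim\alpha+\beta$, matching the length factors in the bounds on $R_k$; the bound $Df\geq c_1>0$ similarly handles $M_l$ versus $S_l$. With these matching length scales and the uniform lower bound $B_2\geq c_1$, all four ratios are dominated by a constant (depending only on $f$) times $\int_{z_1}^{z_4}|D^2f|$, whence (\ref{eq13}) follows.
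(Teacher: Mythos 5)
Your proof is correct and follows essentially the same route as the paper: Taylor expansions with integral remainder are split at the break point $x_b$, the one-sided derivatives carry the jump exactly, $Df_+(x_b)$ is factored out, and the errors are absorbed into ratios bounded by $\int_{z_1}^{z_4}|D^2f|$. The only difference is presentational: you explicitly introduce the piecewise-linear comparison map $\tilde f$ and identify the target expression as $Dist(z_1,z_2,z_3,z_4;\tilde f)$, whereas the paper extracts the same principal factor directly from the expansions (\ref{eq14})--(\ref{eq15}); the underlying algebra is the same, and your framing makes it clearer why that particular rational expression in $\sigma,\xi,z$ appears.
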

\begin{proof}. By assumption $x_b\in[z_1,z_2]$. Let the jump ratio of $Df(x)$
at the point $x_{b}$  be 
$\sigma(x_{b})=\frac{Df_{-}(x_{b})}{Df_{+}(x_{b})}$.
Rewrite then $Dist(z_1,z_2,z_3,z_4;f)$ in the form:
\begin{eqnarray*}
Dist(z_1,z_2,z_3,z_4;f)&=&\frac{Cr(f(z_1),f(z_2),f(z_3),f(z_4))}{Cr(z_1,z_2,z_3,z_4)}\\
&=&\Big(\frac{f(z_2)-f(z_1)}{z_2-z_1}:\frac{f(z_3)-f(z_1)}{z_3-z_1}\Big)\Big(\frac{f(z_4)-f(z_3)}{z_4-z_3}:\frac{f(z_4)-f(z_2)}{z_4-z_2}\Big).
\end{eqnarray*}
It is easy to check that
\begin{eqnarray*}
&&f(z_{2})-f(z_{1})=(f(x_{b})-f(z_{1}))+(f(z_{2})-f(x_{b}))=\\
&=&\left\lbrace Df_{-}(x_{b})(x_{b}-z_{1})-
\overset{x_{b}}{\underset{z_{1}}{\int}}
D^{2}f(y)(y-z_1)dy\right\rbrace + \left\lbrace Df_{+}(x_{b})(z_{2}-x_{b})+
\overset{z_{2}}{\underset{x_{b}}{\int}}
D^{2}f(y)(z_{2}-y)dy\right\rbrace =\\
&=&Df_{+}(x_{b})(z_{2}-z_{1})\left[ \sigma(x_{b})+(1-\sigma(x_{b}))\frac{\tau}{\alpha})\right]+\\
&+& Df_{+}(x_{b})\alpha\left \lbrace\frac{1}{Df_{+}(x_{b})}  \overset{z_{2}}{\underset{x_{b}}{\int}}
D^{2}f(y)\frac{z_{2}-y}{z_{2}-z_{1}}dy-\frac{1}{Df_{+}(x_{b})}  \overset{x_{b}}{\underset{z_{1}}{\int}}
D^{2}f(y)\frac{y-z_{1}}{z_{2}-z_{1}}dy\right\rbrace.\\
\end{eqnarray*}
Hence
\begin{equation}\label{eq14}
f(z_{2})-f(z_{1})=Df_{+}(x_{b})(z_{2}-z_{1}) \left[ \sigma(x_{b})+(1-\sigma(x_{b}))\frac{\tau}{\alpha}+r_{1}(x_{b},z_{1},z_{2})\right];
\end{equation}
in analogy we find
\begin{eqnarray*}
&&f(z_{3})-f(z_{1})=(f(x_{b})-f(z_{1}))+(f(z_{3})-f(x_{b}))=\\ 
&=&\left\lbrace Df_{-}(x_{b})(x_{b}-z_{1})-
\overset{x_{b}}{\underset{z_{1}}{\int}}
D^{2}f(y)(y-z_1)dy\right\rbrace + \left\lbrace Df_{+}(x_{b})(z_{3}-x_{b})+
\overset{z_{3}}{\underset{x_{b}}{\int}}
D^{2}f(y)(z_{3}-y)dy\right\rbrace=\\
&=&Df_{+}(x_{b}) (z_{3}-z_{1})\left[\frac{z_{3}-x_{b}}{z_{3}-z_{1}} +\sigma(x_{b})\frac{z_{1}-x_{b}}{z_{3}-z_{1}})\right]+\\ &+&Df_{+}(x_{b})(z_{3}-z_{1})\left \lbrace\frac{1}{Df_{+}(x_{b})}  \overset{z_{3}}{\underset{x_{b}}{\int}}
D^{2}f(y)\frac{z_{3}-y}{z_{2}-z_{1}}dy-\frac{1}{Df_{+}(x_{b})}  \overset{x_{b}}{\underset{z_{1}}{\int}}
D^{2}f(y)\frac{y-z_{1}}{z_{2}-z_{1}}dy\right\rbrace.
\end{eqnarray*}
respectively
\begin{equation}\label{eq15}
f(z_{3})-f(z_{1})=
Df_{+}(x_{b})(z_{3}-z_{1})
\left[\frac{\tau+\beta}{\alpha+\beta} +\sigma(x_b)\frac{\alpha-\tau}{\alpha+\beta}+r_{2}(x_{b},z_{1},z_{3})\right].
\end{equation}
For $|r_{1}(x_{b},z_{1},z_{2})|$ and $|r_{2}(x_{b},z_{1},z_{3})|$ then the following estimates hold:
\begin{equation}\label{eq16}
|r_{1}(x_{b},z_{1},z_{2})|,|r_{2}(x_{b},z_{1},z_{3})|\le \frac{1}{Df_{+}(x_{b})} \overset{z_4}{\underset{z_1}{\int}}|D^{2}f(y)|dy
\end{equation}
Using this, (\ref{eq14}) and (\ref{eq15}) we get:\\
\begin{eqnarray}\label{eq17}
 \Big|\frac{f(z_2)-f(z_1)}{z_2-z_1}:\frac{f(z_3)-f(z_1)}{z_3-z_1}-
\frac{[\sigma(f)+(1-\sigma(f))z](1+\xi)}{\sigma(f)+\xi+(1-\sigma(f))z}\Big|\nonumber\\
\leq
K_{2}\overset{z_3}{\underset{z_1}{\int}} |D^{2}f(y)|dy,
\end{eqnarray}
with $\xi$ and  $z$ as defined in (\ref{eq12}) and where the constant $K_{2}>0$ is depending only on the function $f$. 

Since the interval  $[z_2,z_4]$ does not contain the break point
$x_{b}$,  it can easily be shown that
\begin{eqnarray*}
\Big|\frac{f(z_4)-f(z_3)}{z_4-z_3}:\frac{f(z_4)-f(z_2)}{z_4-z_2}-1\Big|\leq
K_{3}\overset{z_4}{\underset{z_2}{\int}} |D^{2}f(y)|dy,
\end{eqnarray*}
where also  the constant $K_{3}>0$ depends only on $f$.
The last inequality together with the bounds (\ref{eq16}) and (\ref{eq17})
imply the assertion of Lemma \ref{lemm3.2}. 
\end{proof}
\begin{rem} If the break point $x=x_{b}$ belongs to the right interval $[z_{3},z_{4}]$,
then one can prove the following estimate: 
$$ |Dist(z_{1},z_{2},z_{3},z_{4};f)-\frac{[\sigma(x_b)+(1-\sigma(x_b))\vartheta](1+\eta)}{\sigma(x_b)+(1-\sigma(x_b))\vartheta+\eta}|\leq
K_4 \overset{z_4}{\underset{z_1}{\int}}|D^{2}f(y)|dy,
$$
where $\eta=\frac{z_{3}-z_{2}}{z_{4}-z_{3}}$, $\vartheta=\frac{x_{b}-z_{3}}{z_{4}-z_{3}}$ and the constant $K_4>0$ depends only on the function $f$.
\end{rem}

\section{The proofs of Theorem \ref{DLM} and Theorem \ref{DLM1}}
For the proofs of Theorem \ref{DLM} and Theorem \ref{DLM1} we need several Lemmas which we formulate next and whose proofs will be given later.
\begin{lemm}\label{lemm4.1} Assume that the lift $\varphi$ of the 
conjugating homeomorphism  $T_{\varphi}(x)$ has a positive derivative
$D \varphi(x_{0})=\omega$ at the point $x=x_{0}\in S^{1}$, and the
following conditions hold for $z_i\in S^1,\, i=1,..,4$ with $z_1<z_2<z_3<z_4$ and some constant $R_{1}>1:$
\begin{itemize}
\item[(a)] $R_{1}^{-1}|z_{3}-z_{2}| \leq|z_{2}-z_{1}|\leq
R_{1}|z_{3}-z_{2}|$,\, 
$R_{1}^{-1}|z_{3}-z_{2}| \leq|z_{4}-z_{3}|\leq
R_{1}|z_{3}-z_{2}|;$
\item[(b)] $\underset{1\leq i\leq 4}{max}|z_{i}-x_{0}|\leq
R_{1}|z_{2}-z_{1}|.$
\end{itemize}
Then for any $\varepsilon>0$ there exists
$\delta=\delta(\varepsilon)>0$ such  that
\begin{eqnarray}\label{eq18}
|Dist(z_1,z_2,z_3,z_4;T_{\varphi})-1|\leq C_{2}\varepsilon,
\end{eqnarray}
if $z_{i}\in (x_{0}-\delta,\ x_{0}+\delta)$ for all $i=1,2,3,4$, 
 where the constant $C_{2}>0$ depends only on $R_{1},$ 
$\omega$ and not on $\varepsilon.$
\end{lemm}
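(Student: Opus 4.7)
The plan is to exploit the differentiability of $\varphi$ at $x_0$ directly: since $D\varphi(x_0)=\omega>0$, for any $\varepsilon>0$ there is $\delta=\delta(\varepsilon)>0$ such that for all $y\in(x_0-\delta,x_0+\delta)$ one has
\begin{equation*}
\varphi(y)=\varphi(x_0)+\omega(y-x_0)+r(y),\qquad |r(y)|\leq \varepsilon\,|y-x_0|.
\end{equation*}
Subtracting this relation at $y=z_k$ and $y=z_j$ yields
\begin{equation*}
\varphi(z_k)-\varphi(z_j)=\omega(z_k-z_j)\,(1+\eta_{kj}),
\qquad
\eta_{kj}=\frac{r(z_k)-r(z_j)}{\omega(z_k-z_j)}.
\end{equation*}

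The main step is to show that each $\eta_{kj}$ is uniformly $O(\varepsilon)$ with a constant depending only on $R_1$ and $\omega$. For the numerator, hypothesis (b) yields $|r(z_k)|\leq \varepsilon R_1|z_2-z_1|$, so $|r(z_k)-r(z_j)|\leq 2\varepsilon R_1|z_2-z_1|$. For the denominator, hypothesis (a) forces all six differences $|z_k-z_j|$ ($k>j$) to be comparable to $|z_2-z_1|$; indeed $|z_3-z_2|\geq R_1^{-1}|z_2-z_1|$, $|z_4-z_3|\geq R_1^{-2}|z_2-z_1|$, and the remaining three are bounded below by the trivial chain inclusion. Hence there is a constant $c=c(R_1)>0$ with $|z_k-z_j|\geq c\,|z_2-z_1|$ for all pairs, and consequently
\begin{equation*}
|\eta_{kj}|\leq \frac{2\varepsilon R_1}{\omega\,c}=:C'\varepsilon.
\end{equation*}

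Plugging these estimates into
\begin{equation*}
Dist(z_1,z_2,z_3,z_4;T_\varphi)=\frac{Cr(\varphi(z_1),\varphi(z_2),\varphi(z_3),\varphi(z_4))}{Cr(z_1,z_2,z_3,z_4)}
=\frac{(1+\eta_{12})(1+\eta_{43})}{(1+\eta_{13})(1+\eta_{42})},
\end{equation*}
a routine expansion (valid once $\delta$ is chosen so small that $C'\varepsilon\leq 1/2$) gives
$|Dist(z_1,z_2,z_3,z_4;T_\varphi)-1|\leq C_2\varepsilon$ with a constant $C_2=C_2(R_1,\omega)$, as required.

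The computation is essentially routine; the only subtlety is the careful verification that hypotheses (a) and (b) together imply the uniform comparability $|z_k-z_j|\asymp|z_2-z_1|\asymp|z_k-x_0|$ needed to convert the pointwise smallness of $r$ at $x_0$ into a uniform smallness of the increment ratios $\eta_{kj}$. This is the only step where the geometric hypotheses really enter, and it is where the dependence of $C_2$ on $R_1$ (and, through the division by $\omega$, on $\omega$) is produced.
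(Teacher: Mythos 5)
Your proof is correct and follows essentially the same route as the paper: both arguments expand $\varphi$ around $x_0$ using the derivative, show via hypotheses (a) and (b) that all six gaps $|z_k-z_j|$ (and all $|z_i-x_0|$) are uniformly comparable to $|z_2-z_1|$ with constants depending only on $R_1$, and then read off the cross-ratio distortion from the resulting difference quotients. Your formulation with $r(y)$ and $\eta_{kj}$ is marginally cleaner, as it avoids the paper's need to split into cases according to the position of $x_0$ relative to the $z_i$.
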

Suppose that $D\varphi(x_{0})=\omega$ for some point $x=x_{0},\ x_{0}\in S^{1}$.
Consider its $n-$th dynamical partition
$$\xi_{n}(x_{0})=\left\lbrace \Delta_{i}^{(n-1)}(x_{0}), \  0\leq
i<q_{n}; \    \Delta_{j}^{(n)}(x_{0}), \ 0\leq j<q_{n-1}\right\rbrace. $$
For definitness suppose, that $n$ is odd. Then
$\Delta_{0}^{(n)}(x_{0})=[T_{f}^{q_{n}}x_{0},x_{0}]$ and $\Delta_{0}^{(n-1)}(x_{0})=[x_{0},T_{f}^{q_{n-1}}x_{0}]$ are its two generators.
Denote by $\overline{a}_{b}$ and $\overline{c}_{b}$ the preimages of $a_{b}$ and $c_{b}$ in the
interval $\left[ T_{f}^{q_{n}}x_{0}, T_{f}^{q_{n-1}}x_{0}\right] $ 
 such that $\overline{a}_{b}=T_{f}^{-l}a_{b}$ and $\overline{c}_{b}=T_{f}^{-p}c_{b}$
for some $l,p \in [0,q_{n}) $.

Define next for $m \in [0,q_{n}]$
\begin{eqnarray}\label{eq19}
\xi(m):=\frac{T_{f}^{m}z_{3}-T_{f}^{m}z_{2}}{T_{f}^{m}z_{2}-T_{f}^{m}z_{1}},\,\,\,  \,\,\,
z(m):=\frac{T_{f}^{m}z_{2}-T_{f}^{m}\overline{c}_{b}}{T_{f}^{m}z_{2}-T_{f}^{m}z_{1}},\nonumber\\
\eta(m):=\frac{T_{f}^{m}z_{3}-T_{f}^{m}z_{2}}{T_{f}^{m}z_{4}-T_{f}^{m}z_{3}},\,\,\,\,\,\,  
\vartheta(m):=\frac{T_{f}^{m}\overline{c}_{b}-T_{f}^{m}z_{3}}{T_{f}^{m}z_{4}-T_{f}^{m}z_{3}}
\end{eqnarray}

The numbers $z(m)$ (if  $\overline{c}_{b}\in \left[z_{1}, z_{2} \right]$) and $\vartheta(m)$
(if $\overline{c}_{b}\in \left[z_{3}, z_{4} \right] $) are called normalized coordinates of the point $T_{f}\overline{c}_{b}$.
It is clear that the normalized coordinates $z(m)$ (respectively $\vartheta(m)$) change from 0 to 1, when the break point $c_{b}$ is moving from $T_{f}^{p}z_{2}$ to $T_{f}^{p}z_{1}$ 
(respectively from $T_{f}^{p}z_{3}$ to $T_{f}^{p}z_{4}$).

\begin{defi}\label{defi4.1} The intervals  $ \left\lbrace T_{f}^{j}[z_{1},z_{2}],  T_{f}^{j}[z_{2},z_{3}], T_{f}^{j}[z_{3},z_{4}]: 0\le j\le q_{n}-1\right\rbrace $ cover the 
break points  $a_{b}$, $c_{b}$ regularly with constants $C\geq 1$, $\zeta\in [0,1]$, if
\begin{itemize}
\item[1)] the intervals  $ \left\lbrace T_{f}^{j}[z_{1},z_{4}], 0\le j\le q_{n}-1\right\rbrace$ cover every break point only once;

\item[2)] either  $z_{2}=T_{f}^{-l}a_{b}$  and  $T_{f}^{-p}c_{b}\in \left[z_{1}, z_{2} \right] $ 
or  $z_{3}=T_{f}^{-l}a_{b}$ 
and  $T_{f}^{-p}c_{b}\in \left[z_{3}, z_{4} \right] $ for some $l, p\in \left[ 0, q_{n}\right) $;

\item[3)] $\xi(0)\geq C$  and \,\,$z(0)\in [0, \zeta]$ \,\, if\,\, 
$\overline{c}_{b}=T_{f}^{-l}c_{b}\in \left[z_{1}, z_{2} \right], $\\
$\eta(0) \geq C$ and $\vartheta(0)\in [0, \zeta]$ \,\, if\,\, $\overline{c}_{b}=T_{f}^{-p}c_{b}\in \left[z_{3}, z_{4} \right], $
\end{itemize}
or if conditions 1)-3) hold for $a_{b}$ and $c_{b}$ interchanged.\\
\end{defi}
In order to formulate the next Lemma we introduce the following functions for $x>0$ and $0\leq t\leq 1$:
\begin{eqnarray}\label{eq20}
G(x)=\frac{\sigma(a_{b})(1+x)}{\sigma (a_{b})+x},\,\,
F(x,t)=\frac{[\sigma(c_{b})+(1-\sigma(c_{b}))t](1+x)}{\sigma(c_{b})+(1-\sigma(c_{b}))t+x},
\end{eqnarray}
\begin{lemm}\label{lemm4.2} Suppose that the homeomorphism $T_{f}$ satisfies the conditions of Theorem \ref{DLM}. If
the intervals  $ \left\lbrace T_{f}^{j}[z_{1},z_{2}],  T_{f}^{j}[z_{2},z_{3}], T_{f}^{j}[z_{3},z_{4}]: 0\le j\le q_{n}-1\right\rbrace $ cover 
the break points  $a_{b}$, $c_{b}$ such that either $z_2=T_f^{-l}a_b$ and $T_f^{-p}c_b\in[z_1,z_2]$ or $z_3=T_f^{-l}a_b$ and  $T_f^{-p}c_b\in[z_3,z_4]$ then 
\begin{itemize}
\item[I)]
$ Dist(z_{1},z_{2},z_{3},z_{4};T_{f}^{q_{n}})
=[G(\xi(l))+\chi_{1}][F(\xi(p),z(p))+\chi_{2}]\times $\\
$\times\prod_{\substack{0\leq i<q_{n} \\ i\neq l,p }}Dist (T_{f}^{i}z_{1},T_{f}^{i}z_{2}, T_{f}^{i}z_{3},T_{f}^{i}z_{4};T_{f})
$ if  $z_{2}=\overline{a}_{b},$ $ \overline{c}_{b}\in \left[z_{1}, z_{2} \right];  $ 
\item[II)]
$ Dist(z_{1},z_{2},z_{3},z_{4};T_{f}^{q_{n}})
=[G(\eta(l))+\chi_{3}][F(\eta(p),\vartheta(p))+\chi_{4}]\times$ \\
$\times\prod_{\substack{0\leq i<q_{n} \\ i\neq l,p }}Dist (T_{f}^{i}z_{1},T_{f}^{i}z_{2}, T_{f}^{i}z_{3},T_{f}^{i}z_{4};T_{f})
$ if  $z_{3}=\overline{a}_{b},$ $ \overline{c}_{b}\in \left[z_{3}, z_{4} \right]  $
\end{itemize}
where 
\begin{equation}\label{eq21}
|\chi_{j}|=|\chi_{j}(z_{1},z_{2},z_{3},z_{4})|\leq
K_1 \overset{z_4}{\underset{z_1}{\int}}|D^{2}f(y)|dy,\,\, 1\leq j \leq4.
\end{equation}
and the constant $R_{5}$ does not depend on $n$ and $\varepsilon$
\end{lemm}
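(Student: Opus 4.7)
The plan is to exploit the multiplicativity (chain rule) of the cross-ratio distortion under iteration: since $T_f^{q_n}=T_f\circ T_f\circ\dots\circ T_f$ ($q_n$ factors), one has
$$Dist(z_1,z_2,z_3,z_4;T_f^{q_n})=\prod_{i=0}^{q_n-1} Dist(T_f^iz_1,T_f^iz_2,T_f^iz_3,T_f^iz_4;T_f).$$
Thus everything reduces to analyzing a single step of $T_f$ applied to the quadruple $(T_f^iz_1,\dots,T_f^iz_4)$, and identifying the two indices at which a break point enters the picture.

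By the covering assumption (condition 1 of Definition~\ref{defi4.1}), each of $a_b$ and $c_b$ lies in exactly one of the intervals $T_f^i[z_1,z_4]$, $0\le i<q_n$. This splits the indices into three disjoint groups. At $i=l$ the point $a_b$ is hit: in Case~I, $T_f^lz_2=a_b$ sits precisely at the right endpoint of the left outer interval $[T_f^lz_1,T_f^lz_2]$, while in Case~II, $T_f^lz_3=a_b$ sits at the left endpoint of $[T_f^lz_3,T_f^lz_4]$. At $i=p$ the point $c_b=T_f^p\overline{c}_b$ lies in the interior of $[T_f^pz_1,T_f^pz_2]$ (Case~I) or of $[T_f^pz_3,T_f^pz_4]$ (Case~II). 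All other indices are smooth: $T_f^i[z_1,z_4]$ contains no break point, and the corresponding factor is simply the one appearing in the product $\prod_{i\ne l,p}Dist(T_f^iz_1,\dots,T_f^iz_4;T_f)$ of the statement.

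For $i=p$, the break point lies strictly inside an outer interval, so Lemma~\ref{lemm3.2} (or the Remark following it, in Case~II) applies directly. Its main factor, with the notations $\xi,z$ (respectively $\eta,\vartheta$) specialized to $\xi(p),z(p)$ (respectively $\eta(p),\vartheta(p)$), becomes $F(\xi(p),z(p))+\chi_2$ in Case~I and $F(\eta(p),\vartheta(p))+\chi_4$ in Case~II. For $i=l$ the break point is at an endpoint, a boundary case of Lemma~\ref{lemm3.2}: reading the proof of that lemma with $\tau=z_2-x_b=0$ (so $z=0$), or equivalently just expanding $f(z_2)-f(z_1)$ using $Df_-(a_b)$ and splitting $f(z_3)-f(z_1)$ at $a_b$ using $Df_-(a_b)$ on the left and $Df_+(a_b)$ on the right, one sees that the jump ratio $\sigma(a_b)=Df_-(a_b)/Df_+(a_b)$ enters the leading term and the main factor collapses to $\frac{\sigma(a_b)(1+\xi(l))}{\sigma(a_b)+\xi(l)}=G(\xi(l))$; the error term $\chi_1$ inherits an $L^1$ bound on $D^2f$ of the same type as in Lemma~\ref{lemm3.2}. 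The Remark after Lemma~\ref{lemm3.2} handles Case~II symmetrically, yielding $G(\eta(l))+\chi_3$.

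Multiplying the $q_n$ factors one obtains precisely the decompositions I) and II). The only genuine bookkeeping item is the bound~(\ref{eq21}): Lemma~\ref{lemm3.2} naturally produces the error as an integral of $|D^2f|$ over the pushed interval $[T_f^iz_1,T_f^iz_4]$ at step $i\in\{l,p\}$, and one converts this into an expression controlled by data at the original $[z_1,z_4]$ via the Denjoy-type bounded distortion of Lemma~\ref{lem2.1} and Lemma~\ref{lemm2.2}, absorbing a uniform Jacobian factor into the constant $K_1$. I expect this matching of the integration domain to be the only technical obstacle; once set up, the rest of the argument is a direct assembly of Lemma~\ref{lemm3.2} (plus its boundary limit) with the telescoping identity.
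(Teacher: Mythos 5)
Your decomposition is exactly the paper's: factor $Dist(z_1,z_2,z_3,z_4;T_f^{q_n})$ via the chain rule into $q_n$ single--step distortions, isolate the indices $l$ and $p$ at which $a_b$ and $c_b$ appear, apply Lemma~\ref{lemm3.2} at $i=p$ (break point strictly inside $[T_f^pz_1,T_f^pz_2]$), observe that the boundary case $\tau=0$, i.e. $F(\xi(l),0)=G(\xi(l))$, covers $i=l$ (break point at the endpoint $T_f^lz_2$), and leave the remaining indices as the product over $i\ne l,p$. All of that is correct and is the proof the paper gives.

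The one place you go wrong is the ``bookkeeping'' step you yourself flag. You want to pass from the bound Lemma~\ref{lemm3.2} actually yields, namely $K_1\int_{T_f^iz_1}^{T_f^iz_4}|D^2f(y)|\,dy$ with $i\in\{l,p\}$, to the bound $K_1\int_{z_1}^{z_4}|D^2f(y)|\,dy$ stated in \eqref{eq21}, ``via Denjoy-type bounded distortion \ldots absorbing a uniform Jacobian factor.'' That conversion does not work: there is no change-of-variables identity relating the two integrals, because the integrand is the \emph{same} function $|D^2f|$ evaluated on two disjoint regions of $S^1$, not a density being pulled back. A substitution $y=T_f^iu$ turns $\int_{T_f^i[z_1,z_4]}|D^2f(y)|\,dy$ into $\int_{[z_1,z_4]}|D^2f(T_f^iu)|\,DT_f^i(u)\,du$, which involves $D^2f\circ T_f^i$, not $D^2f$ itself; Lemma~\ref{lem2.1} and Lemma~\ref{lemm2.2} control the Jacobian $DT_f^i$ but say nothing about how the $L^1$-mass of $D^2f$ is distributed across different intervals. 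In fact no inequality $\int_{T_f^iI}|D^2f|\le C\int_I|D^2f|$ can hold in general for an arbitrary $L^1$ function.

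The resolution is that \eqref{eq21} as printed is simply a slip: the error bound delivered by Lemma~\ref{lemm3.2} should read $|\chi_j|\le K_1\int_{T_f^{i}z_1}^{T_f^{i}z_4}|D^2f(y)|\,dy$ with $i=l$ for $j=1,3$ and $i=p$ for $j=2,4$, and the proof should carry that form. This loses nothing downstream: in the proof of Lemma~\ref{lemm4.5} the only property used is that the error integral is $<\varepsilon$ for $n$ large, and that follows equally well for the pushed interval because $[z_1,z_4]$ is $q_n$-small, hence by Corollary~\ref{cor2.2} every $T_f^i[z_1,z_4]$ has length $\le\mathrm{const}\,\lambda^n<\delta_0(\varepsilon)$, and $\Psi(x)=\int_0^x|D^2f(y)|\,dy$ is uniformly continuous. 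So: keep the integral over the pushed interval and drop the Jacobian argument; then your proof coincides with the paper's.
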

Using Lemma \ref{lemm2.2} we get the following inequalities for all $1\leq m\leq q_n$:
\begin{eqnarray}\label{eq22}
e^{-v}\xi(0)\leq \xi(m)\leq e^{v} \xi(0),\,\,\,\,
e^{-v}z(0)\leq z(m)\leq e^{v}z(0),\nonumber\\
e^{-v}\eta(0)\leq \eta(m)\leq e^{v} \eta(0),\,\,\,\, 
e^{-v}\vartheta(0)\leq \vartheta(m)\leq e^{v}\vartheta(0)
\end{eqnarray}
From this it follows, that the normalized coordinates  $\xi(m)$,  $\eta(m)$, $z(m)$, and  $\vartheta(m)$ are uniformly (with respect to $x_{0}$ and $m$) comparable with the initial normalized coordinates $\xi(0)$,  $\eta(0)$, $z(0)$, and  $\vartheta(0)$ respectively.

\begin{lemm}\label{lemm4.3}If a circle homeomorphism $T_{f}$ satisfies the conditions of Lemma \ref{lemm2.2} then there exist for any $x_{0}\in S^{1}$ and any $\delta>0$ constants  $C_{0}=C_{0}(f,\sigma(a_{b}),\sigma(c_{b}))>1$ and $\zeta_{0}=\zeta_{0}(f,\sigma(a_{b}),\sigma(c_{b}))\in (0,1)$, such that for all triple of intervals $[z_{s},z_{s+1}]\subset(x_{0}-\delta, \ x_{0}+\delta),$ $s=1,2,3,$ covering the break points $a_{b}$,\,\,$c_{b}$ regularly with constants $C_{0}$ 
and $\zeta_{0}$ the following relations hold:
\begin{eqnarray*}
|G(\xi(l))F(\xi(p),z(p))-1|\geq \frac{|\sigma(a_{b})\sigma(c_{b})-1|}{4} \,\,
if \,\, z_{2}=\overline{a}_{b},\,0\le \frac{z_{2}-\overline{c}_{b}}{z_{2}-z_{1}}\le \zeta_{0}
\end{eqnarray*}
 respectively
\begin{eqnarray*}
|G(\eta(l))F(\eta(p),\vartheta(p))-1|\geq \frac{|\sigma(a_{b})\sigma(c_{b})-1|}{4}
\,\, if \,\, z_{3}=\overline{a}_{b}, \, 0\le \frac{\overline{c}_{b}-z_{3}}{z_{4}-z_{3}}\le \zeta_{0}.
\end{eqnarray*}
\end{lemm}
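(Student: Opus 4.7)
The key idea is that the functions $G$ and $F$ in (\ref{eq20}) have simple limits at infinity which, in the regularly-covered regime, force $G(\xi(l))F(\xi(p),z(p))$ to be close to the product of jump ratios $\mu:=\sigma(a_{b})\sigma(c_{b})$. Since $\mu\neq 1$ by hypothesis (b) of Theorem \ref{DLM}, a triangle inequality then yields a quantitative lower bound for $|G\cdot F-1|$.

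First I compute the relevant limits. From $G(x)=\sigma(a_{b})(1+x)/(\sigma(a_{b})+x)$ one has $G(x)-\sigma(a_{b})=\sigma(a_{b})(1-\sigma(a_{b}))/(\sigma(a_{b})+x)$, so $G(x)\to\sigma(a_{b})$ as $x\to\infty$. For $F(x,t)$, writing $s:=\sigma(c_{b})+(1-\sigma(c_{b}))t$, a short calculation gives
$$F(x,t)-\sigma(c_{b})=(1-\sigma(c_{b}))\cdot\frac{s+xt}{s+x},$$
which vanishes as $x\to\infty$ and $t\to 0$ jointly. Consequently, for every $\varepsilon>0$ there exist $M=M(\varepsilon,\sigma(a_{b}),\sigma(c_{b}))>0$ and $\kappa=\kappa(\varepsilon,\sigma(a_{b}),\sigma(c_{b}))>0$ such that whenever $x,x'\ge M$ and $0\le t\le\kappa$ one has $|G(x)F(x',t)-\mu|\le\varepsilon$.

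Next I translate the regularity hypotheses into the regime just described. By definition of a regular covering with constants $C_{0},\zeta_{0}$ we have $\xi(0)\ge C_{0}$ and $z(0)\in[0,\zeta_{0}]$. Combining with the distortion bounds (\ref{eq22}) coming from Lemma \ref{lemm2.2} yields
$$\xi(l),\ \xi(p)\ge e^{-v}C_{0},\qquad 0\le z(p)\le e^{v}\zeta_{0}.$$
I now set $\varepsilon:=|\mu-1|/2>0$ and choose $C_{0}:=e^{v}M$ and $\zeta_{0}:=e^{-v}\kappa$; these depend only on $f$ (through $v=\mathrm{Var}(\log Df)$) and on $\sigma(a_{b}),\sigma(c_{b})$. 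Then $\xi(l),\xi(p)\ge M$ and $z(p)\le\kappa$, so by the previous paragraph $|G(\xi(l))F(\xi(p),z(p))-\mu|\le|\mu-1|/2$, whence by the triangle inequality
$$|G(\xi(l))F(\xi(p),z(p))-1|\ge|\mu-1|-\frac{|\mu-1|}{2}=\frac{|\mu-1|}{2}\ge\frac{|\mu-1|}{4}.$$

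The second case ($z_{3}=\overline{a}_{b}$, $\overline{c}_{b}\in[z_{3},z_{4}]$) is identical after replacing $\xi,z$ by $\eta,\vartheta$ throughout and invoking the matching half of (\ref{eq22}). I anticipate no deep obstacle: the lemma is essentially a continuity statement for the explicit rational functions $G$ and $F$, and the only structural subtlety is the factor $(1-\sigma(c_{b}))$ in the identity $F(x,t)-\sigma(c_{b})=(1-\sigma(c_{b}))(s+xt)/(s+x)$, which ensures that forcing $t$ to be small really does kill the residual dependence of $F(x,t)$ on $x$ at infinity and permits the joint limit to equal $\sigma(c_{b})$.
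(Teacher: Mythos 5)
Your proof is correct, and it exploits exactly the same structural idea as the paper: in the regularly-covered regime $G(\xi(l))F(\xi(p),z(p))$ is forced close to $\mu=\sigma(a_b)\sigma(c_b)$, and $\mu\neq 1$ then gives a uniform lower bound for $|GF-1|$ via the triangle inequality. The decomposition differs slightly. The paper factors the product as $GF=\Phi_1\Phi_2$ with
$$\Phi_1(z(p))=\sigma(a_b)\sigma(c_b)+(1-\sigma(c_b))\sigma(a_b)z(p),\qquad \Phi_2=\frac{1+\xi(l)}{\sigma(a_b)+\xi(l)}\cdot\frac{1+\xi(p)}{\sigma(c_b)+(1-\sigma(c_b))z(p)+\xi(p)},$$
shows $|\Phi_1-1|\geq|\mu-1|/2$ once $z(0)\leq\zeta_0$ and $\Phi_1|\Phi_2-1|\leq|\mu-1|/4$ once $\xi(0)\geq C_0$, then uses the reverse triangle inequality $|\Phi_1\Phi_2-1|\geq|\Phi_1-1|-\Phi_1|\Phi_2-1|$. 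You instead leave the product unfactored and bound each factor against its own limit, $G(x)\to\sigma(a_b)$ as $x\to\infty$ and $F(x,t)\to\sigma(c_b)$ as $x\to\infty$, $t\to 0$ (your identity $F(x,t)-\sigma(c_b)=(1-\sigma(c_b))(s+xt)/(s+x)$, $s=\sigma(c_b)+(1-\sigma(c_b))t$, checks out, with $(s+xt)/(s+x)\leq s/x+t$), so $|GF-\mu|\leq|\mu-1|/2$, and then a direct triangle inequality $|GF-1|\geq|\mu-1|-|GF-\mu|$ gives the bound. This is a mild but real simplification: it avoids the ad hoc $\Phi_1,\Phi_2$ splitting, is more transparent about what each of $C_0$ and $\zeta_0$ controls, and even yields the (slightly stronger) constant $|\mu-1|/2$ before you relax to $|\mu-1|/4$. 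One small bookkeeping point you could add: your choices $C_0:=e^vM$ and $\zeta_0:=e^{-v}\kappa$ need to be adjusted (e.g.\ take $\max$ with $2$ and $\min$ with $1/2$ respectively) to guarantee $C_0>1$ and $\zeta_0\in(0,1)$ as the statement requires, though this is harmless since enlarging $M$ and shrinking $\kappa$ only helps.
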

\begin{lemm}\label{lemm4.4} If the circle homeomorphism $T_{f}$ satisfies the conditions of Lemma \ref{lemm2.2} there exists for any $x_{0}\in S^{1}$ and any $\delta>0$ a number $N=N(\delta, x_{0})>1,$ such that for all $n>N(\delta, x_{0})$,
there is a triple of intervals $[z_{s},z_{s+1}]\subset(x_{0}-\delta, \ x_{0}+\delta),$ $s=1,2,3$ with the following properties:
\begin{itemize}
\item[1)] the interval $[z_{1},z_{4}]$ is $q_{n}$  small;
\item[2)] the intervals $[z_{s},z_{s+1}]$  and $ [T_{f}^{q_{n}}z_{s}, T_{f}^{q_{n}}z_{s+1}]$\, 
$s=1,2,3$ satisfy conditions  a) and b) of Lemma 4.1 with some  constant $R_{1}>1$ 
depending on  $C_{0}$, $\zeta_{0}$ and $v$; 
\item[3)] the intervals $\left\lbrace T_{f}^{i}[z_{1},z_{2}],\,\,T_{f}^{i}[z_{2},z_{3}]\,\, T_{f}^{i}[z_{3},z_{4}],\,\,\,0\leq i \leq q_{n}-1 \right\rbrace $\, either cover both break 
points $a_{b}$, $c_{b}$ regularly with constants $C_{0}$ and
  $\zeta_{0}$, or cover only the break point $a_{b}$ such that
 its preimage $\overline{a}_{b}$ coincides with $z_{2}$ or $z_{3};$
\end{itemize}
\end{lemm}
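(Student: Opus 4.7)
The plan is to construct the four points $z_1 < z_2 < z_3 < z_4$ from the dynamical partition $\xi_n(\overline{a}_b)$ based at a preimage $\overline{a}_b$ of $a_b$ chosen close to $x_0$, and to verify the three conditions of the lemma in turn. By Corollary 2.2 the set $\{T_f^{-i} a_b : 0 \leq i < q_n\}$ is $O(\lambda^n)$-dense in $S^1$, so for $n$ larger than some threshold $N(\delta, x_0)$ there exists $0 \leq l < q_n$ with $\overline{a}_b := T_f^{-l} a_b$ at distance at most $c\lambda^n$ from $x_0$, and hence certainly inside $(x_0 - \delta, x_0 + \delta)$. Set $z_2 := \overline{a}_b$ (the symmetric case $z_3 = \overline{a}_b$, corresponding to the opposite parity, is analogous). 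I would then take $[z_1, z_2]$ and $[z_2, z_3]$ to be the two generators of $\xi_n(\overline{a}_b)$ meeting at $\overline{a}_b$, and $[z_3, z_4]$ to be the next element to the right in this partition. By the refinement structure of the dynamical partition, passing if necessary to $\xi_{n+k}(\overline{a}_b)$ for a bounded $k$, the three intervals can be chosen so that they are comparable with a constant $R_1$ depending only on $v, C_0, \zeta_0$, while the ratio $\xi(0) = |z_3 - z_2|/|z_2 - z_1|$ is either $\geq C_0$ or of order one, depending on the local structure of the partition (governed by $k_{n+1}$).

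Conditions 1 and 2 of the lemma then follow essentially for free. The $q_n$-smallness of $[z_1, z_4]$ holds because the three intervals are elements of a dynamical partition whose first $q_n$ iterates are pairwise disjoint. Lemma 2.1 and Corollary 2.2 transfer the comparability to the $q_n$-iterates, yielding condition (a) of Lemma 4.1; condition (b) holds since $|\overline{a}_b - x_0| = O(\lambda^n) = O(|z_2-z_1|)$.

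The hard part is condition 3. By construction $a_b$ is covered exactly once (by $T_f^l[z_1, z_2]$). For $c_b$, one must decide whether a preimage $\overline{c}_b = T_f^{-p}c_b$ with $0 \leq p < q_n$ lies in $[z_1, z_2]$ with $z(0) \in [0, \zeta_0]$ and $\xi(0) \geq C_0$ (or symmetrically in $[z_3, z_4]$ with $\vartheta(0) \in [0, \zeta_0]$ and $\eta(0) \geq C_0$). If yes, one is in case 1 of the lemma. If no, the freedom in choosing $l$ (many suitable preimages of $a_b$ lie in $(x_0 - \delta, x_0 + \delta)$) allows one to slide $\overline{a}_b$ and hence $[z_1, z_4]$ so that $c_b$ is excluded from every iterate $T_f^j[z_1, z_4]$, $0 \leq j < q_n$, giving case 2. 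The principal obstacle is showing that for every sufficiently large $n$ at least one of these two alternatives is realizable; this requires careful combinatorial tracking of the interleaving of the two preimage orbits $\{T_f^{-i} a_b\}$ and $\{T_f^{-p} c_b\}$, $0 \leq i, p < q_n$, relative to the intervals of $\xi_n$ near $x_0$.
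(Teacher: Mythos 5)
Your overall strategy — work near a preimage $\overline{a}_b$ of $a_b$ close to $x_0$, set $z_2=\overline{a}_b$, and case-split on where $\overline{c}_b$ falls — matches the paper's. But the concrete realization has two gaps, and the second is essential.

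First, you propose to take $[z_1,z_2]$ and $[z_2,z_3]$ to be the two generators of $\xi_n(\overline{a}_b)$. By Corollary \ref{cor2.2} these are $e^{v}$-comparable, so $\xi(0)=|z_3-z_2|/|z_2-z_1|\in[e^{-v},e^{v}]$, not $\geq C_0$. The constant $C_0$ from Lemma \ref{lemm4.3} equals $\max\{4R_6e^{v}m_{\sigma}/|\sigma(a_b)\sigma(c_b)-1|,1\}$, which can be arbitrarily large (it blows up as $\sigma(a_b)\sigma(c_b)\to1$), so no amount of passing to $\xi_{n+k}$ for bounded $k$ will guarantee $\xi(0)\geq C_0$. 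The paper does not use partition elements as the intervals $[z_s,z_{s+1}]$ at all; it uses the partition only to set the \emph{scale} $d_n$, then prescribes the $z_i$ at explicit distances from $\overline{a}_b$ so that $|z_3-z_2|/|z_2-z_1|=C_0$ exactly (the right-hand intervals are stretched by the factor $C_0$ relative to $[z_1,z_2]$).

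Second, your handling of condition 3 — ``slide $\overline{a}_b$'' and ``careful combinatorial tracking of the interleaving of the two preimage orbits'' — describes the difficulty but does not resolve it, and in fact misidentifies its nature. The paper's argument involves no combinatorics of interleaving and no sliding. It introduces two explicitly sized nested neighbourhoods $U_n(\overline{a}_b)\subset V_n(\overline{a}_b)$ of $\overline{a}_b$, with $l(V_n)=e^{-v}C_0^{-1}d_n$ and $l(U_n)=\zeta_0\,l(V_n)$, and dichotomizes on whether $\overline{c}_b\in U_n(\overline{a}_b)$ or not. If $\overline{c}_b\notin U_n$, one takes $[z_1,z_4]$ so small (length $\tfrac{3}{4}\,l(U_n)$, centred appropriately at $\overline{a}_b$) that it cannot contain $\overline{c}_b$, and since $[z_1,z_4]$ sits inside a single generator of $\xi_n(\tau_0)$ its $q_n$ iterates never meet $c_b$ — giving the ``covers only $a_b$'' alternative. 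If $\overline{c}_b\in U_n$, the $C_0$-stretched intervals automatically satisfy $\xi(0)=C_0$ and $z(0)=(\overline{a}_b-\overline{c}_b)/|z_2-z_1|\leq\zeta_0$ by the very definition of $U_n$ — giving the regular covering. This is a pure scale argument, not a combinatorial one, and your proposal as written would not close the gap.
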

\begin{lemm}\label{lemm4.5} Suppose, that the circle homeomorphism $T_{f}$ satisfies the conditions of Theorem \ref{DLM} and the intervals $[z_{s},z_{s+1}],\,\, s=1,2,3$ satisfy conditions 1)-3) of Lemma \ref{lemm4.4}.
Then the following inequality holds for sufficiently large $n$:
 $$|Dist(z_{1},z_{2},z_{3},z_{4};T_{f}^{q_{n}})-1|>const >0 $$
where the constant depends only on the function $f$.
\end{lemm}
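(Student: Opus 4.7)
The strategy is to decompose $Dist(z_1,z_2,z_3,z_4;T_f^{q_n})$ via the chain rule into the product of $q_n$ single-iterate distortions of $T_f$, isolate the one or two iterates at which a break point is crossed, and control them separately. By Lemma~\ref{lemm4.4} (condition 3) there are two regimes: either both $\overline{a}_b$ and $\overline{c}_b$ lie in $[z_1,z_4]$ in the regular configuration required by Lemma~\ref{lemm4.2}, in which case Lemma~\ref{lemm4.2} gives
\begin{equation*}
Dist(z_1,z_2,z_3,z_4;T_f^{q_n})=\bigl[G(\xi(l))+\chi_1\bigr]\bigl[F(\xi(p),z(p))+\chi_2\bigr]\prod_{\substack{0\le i<q_n\\ i\ne l,p}}Dist(T_f^iz_1,\ldots,T_f^iz_4;T_f);
\end{equation*}
or only $\overline{a}_b\in\{z_2,z_3\}$ is covered, in which case Lemma~\ref{lemm3.2} at $i=l$ (the relative position of the break point vanishes, so the Lemma~\ref{lemm3.2} formula collapses to $G(\xi(l))$) combined with Theorem~\ref{theo3.1} at every other $i$ yields the analogous identity with a single bracket $[G(\xi(l))+\chi_1]$ and the product over $i\ne l$.

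The first main step is to show the smooth product converges to $1$ as $n\to\infty$. Fix $\varepsilon>0$ and write $D^2f=g_\varepsilon+\theta_\varepsilon$ as in \eqref{eq6}. For every iterate $i$ whose $T_f^i[z_1,z_4]$ contains no break point, Theorem~\ref{theo3.1} applies; the $q_n$-smallness of $[z_1,z_4]$ (condition 1 of Lemma~\ref{lemm4.4}) makes the intervals $T_f^i[z_1,z_4]$ pairwise disjoint, and each has length at most $C\lambda^n$ by Corollary~\ref{cor2.2}. Summing the three error contributions from Theorem~\ref{theo3.1}: the first is bounded by $C_1\eta_n\sum_i|T_f^iz_4-T_f^iz_1|\le C_1\eta_n$, where $\eta_n:=\max_i\max_{x,t\in T_f^i[z_1,z_4]}|g_\varepsilon(x)-g_\varepsilon(t)|\to 0$ by uniform continuity of $g_\varepsilon$ on $S^1$ and $|T_f^i[z_1,z_4]|\le C\lambda^n$; the second is bounded by $C_1\|\theta_\varepsilon\|_{L^1}<C_1\varepsilon$; the third, via $\sum_i a_i^2\le(\max_i a_i)(\sum_j a_j)$ with $a_i=\int_{T_f^i[z_1,z_4]}|D^2f|$, is bounded by $C_1v\cdot\sup_{|I|\le C\lambda^n}\int_I|D^2f|$, which tends to $0$ by absolute continuity of the Lebesgue integral. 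Hence $\bigl|\log\prod_{i}Dist_i\bigr|\le C_1\varepsilon+o_n(1)$, and since $\varepsilon$ is arbitrary the smooth product tends to $1$.

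The errors $\chi_j$ in the break-point factors satisfy $|\chi_j|\le K_1\int_{T_f^l[z_1,z_4]}|D^2f|\to 0$ (or the $i=p$ analogue) by absolute continuity. In the both-break regime Lemma~\ref{lemm4.3} gives $|G(\xi(l))F(\xi(p),z(p))-1|\ge|\sigma(a_b)\sigma(c_b)-1|/4>0$. In the one-break regime, a direct computation yields $G(x)-1=(\sigma(a_b)-1)x/(\sigma(a_b)+x)$; since $a_b$ is a genuine break point $\sigma(a_b)\ne 1$, and $\xi(l)\in[R_1^{-1}e^{-v},R_1e^v]$ by \eqref{eq22} and condition 2) of Lemma~\ref{lemm4.4} (which forces condition (a) of Lemma~\ref{lemm4.1} to hold and thus $\xi(0)\in[R_1^{-1},R_1]$), so $|G(\xi(l))-1|$ is bounded below by a strictly positive constant depending only on $f$, $R_1$ and $v$. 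Combining with the convergence of the smooth product proves $|Dist(z_1,z_2,z_3,z_4;T_f^{q_n})-1|>const>0$ for all sufficiently large $n$. The main obstacle is the coupled control in the second step: the Katznelson--Ornstein-type splitting $D^2f=g_\varepsilon+\theta_\varepsilon$ must be exploited carefully so that the continuous, the $L^1$-small, and the quadratic contributions are all dominated by the Lemma~\ref{lemm4.3} lower bound; the order of quantifiers (first fix $\varepsilon$ small enough that $C_1\varepsilon<\tfrac18|\sigma(a_b)\sigma(c_b)-1|$, then let $n\to\infty$ so that $\eta_n$ and the absolute-continuity remainders vanish) is essential.
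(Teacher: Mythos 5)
Your proposal follows the paper's own route essentially line by line: the decomposition via Lemma~\ref{lemm4.2} (with the same degenerate case of Lemma~\ref{lemm3.2} when $\overline{a}_b=z_2$), the Katznelson--Ornstein splitting $D^2f=g_\varepsilon+\theta_\varepsilon$ fed into Theorem~\ref{theo3.1} together with $q_n$-disjointness, uniform continuity of $g_\varepsilon$, and absolute continuity of $x\mapsto\int_0^x|D^2f|$ to shrink the smooth product, then Lemma~\ref{lemm4.3} (resp.\ the direct computation $G(x)-1=(\sigma(a_b)-1)x/(\sigma(a_b)+x)$ with $\xi(l)$ pinned by \eqref{eq22}) to keep the break-point factors away from $1$. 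The only quibble is that in your bound for the quadratic remainder the constant should be $\|D^2f\|_{L^1}$ rather than $v=\mathrm{Var}(\log Df)$, but that is a notational slip and the argument is sound.
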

After these preparations we  can now proceed to the proof of  Theorem \ref{DLM}.
Let $T_{f}$ be a class $P$-homeomorphism satisfying the conditions of Theorem \ref{DLM}.
Since its rotation number $\rho_{f}$ is irrational the $T_{f}$-invariant measure $\mu_{f}$
is nonatomic  i.e. every one point subset of the circle has zero  $\mu_{f}$-measure.
 The conjugating map $T_{\varphi}$ related to 
$\mu_{f}$ by $T_{\varphi}x=\mu_{f}([0,x])$ , $x\in S^{1}$, is a continuous and monotone 
increasing function on $S^{1}$. Hence $T_{\varphi}$ has a finite derivative 
almost everywhere (w.r.t. Lebesgue measure) on the circle. We show that
$D\varphi(x)=0$ at all points at which the derivative is defined. 
Choose an $\varepsilon>0$ and 
a triple of intervals $[z_{s},z_{s+1}]\subset(x_{0}-\delta, \ x_{0}+\delta),$\,\, $s=1,2,3,$
satisfying the conditions of Lemma \ref{lemm4.1}. It follows from this Lemma  and Lemma \ref{lemm4.4} that
\begin{eqnarray}\label{eq23}
|Dist(z_1,z_2,z_3,z_4;T_{\varphi})-1|\leq C_{2}\varepsilon,
\end{eqnarray}
and
\begin{eqnarray}\label{eq24}
|Dist(T_{f}^{q_{n}}z_{1},T_{f}^{q_{n}}z_{2}, T_{f}^{q_{n}}z_{3},T_{f}^{q_{n}}z_{4}; T_{\varphi})-1|\leq C_{2}\varepsilon.
\end{eqnarray}
By definition 
\begin{eqnarray}\label{eq25}
&&Dist(T_{f}^{q_{n}}z_{1},T_{f}^{q_{n}}z_{2}, T_{f}^{q_{n}}z_{3},T_{f}^{q_{n}}z_{4}; T_{\varphi})=\nonumber\\
&=&\frac{Cr(T_{\varphi}(T_{f}^{q_{n}}z_{1}), T_{\varphi}(T_{f}^{q_{n}}z_{2}),   T_{\varphi}(T_{f}^{q_{n}}z_{3}), T_{\varphi}(T_{f}^{q_{n}}z_{4}) )}{Cr(T_{f}^{q_{n}}z_{1},T_{f}^{q_{n}}z_{2}, T_{f}^{q_{n}}z_{3},T_{f}^{q_{n}}z_{4})}.
\end{eqnarray}\\
Since $T_{\varphi}$   conjugates $T_{f}$ with the linear rotation  $T_{\rho}$, we can readily see that
$$Cr(T_{\varphi}(T_{f}^{q_{n}}z_{1}),(T_{\varphi}(T_{f}^{q_{n}}z_{2}), (T_{\varphi}(T_{f}^{q_{n}}z_{3}),(T_{\varphi}(T_{f}^{q_{n}}z_{4}) ))=Cr(T_{\varphi}z_{1},T_{\varphi}z_{2}, T_{\varphi}z_{3},T_{\varphi}z_4) ))$$
and hence
$$
Dist(T_{f}^{q_{n}}z_{1},T_{f}^{q_{n}}z_{2}, T_{f}^{q_{n}}z_{3},T_{f}^{q_{n}}z_{4}; T_{\varphi})
=\frac{Cr(T_{\varphi}z_{1},T_{\varphi}z_{2}, T_{\varphi}z_{3},T_{\varphi}z_4) ))}{Cr(T_{f}^{q_{n}}z_{1},T_{f}^{q_{n}}z_{2}, T_{f}^{q_{n}}z_{3},T_{f}^{q_{n}}z_{4})}$$
This together with  \eqref{eq23}, \eqref{eq24} and \eqref{eq25} implies 
\begin{eqnarray}\label{eq26}
|Dist(z_1,z_2,z_3,z_4;T_{f}^{q_{n}})-1|\leq C_{3}\varepsilon,
\end{eqnarray}\\
where the constant $C_{3}>0$ does not depend on $\varepsilon$ and $n$. But this contradicts Lemma \ref{lemm4.5} according to which
$$| Dist(z_1,z_2,z_3,z_4;T_f^{q_n}-1|>const >0 $$
for sufficiently large $n$.
 This contradiction proves Theorem \ref{DLM}. 

\section{The proofs of Lemmas \ref{lemm4.1}-\ref{lemm4.5}}

\textbf{Proof of Lemma \ref{lemm4.1}}  Suppose, that the derivative
$D\varphi(x_{0})$ exists and $D\varphi(x_{0})=\omega >0$. By the
definition of the derivative there exists for any  $\varepsilon>0$  a number \,\, 
$\delta=\delta(x_{0} a \ \varepsilon)>0$, such that for all $x\in(x_{0}-\delta, \ x_{0}+\delta).$
\begin{eqnarray}\label{eq27}
\omega-\varepsilon<\frac{\varphi(x)-\varphi(x_{0})}{x-x_{0}}<\omega+\varepsilon.\end{eqnarray}
Now take four points $z_{i}\in(x_{0}-\delta,x_{0}+\delta)$ 
satisfying conditions (a) and (b) of Lemma \ref{lemm4.1}. 
Assume that $z_{i}<x_{0}, \,1\leq i\leq 4$. For the other cases Lemma \ref{lemm4.1} can be proved similarly. Relation
(\ref{eq27}) implies for $x=z_{i},$ $ i=1,2,3,4$
$$
(\omega-\varepsilon)(x_{0}-z_{i})<\varphi(x_{0})-\varphi(z_{i})<
(\omega +\varepsilon)(x_{0}-z_{i}).
$$

This  yields the following inequalities:

\begin{eqnarray}\label{eq28}
\omega-\varepsilon\frac{(x_{0}-z_{s+1})+(x_{0}-z_{s})}{z_{s+1}-z_{s}}&\leq&
\frac{\varphi(z_{s+1})-\varphi(z_{s})}{z_{s+1}-z_{s}}\nonumber\\
&\leq&\omega+\varepsilon
\frac{(x_{0}-z_{s+1})+(x_{0}-z_{s})}{z_{s+1}-z_{s}}
\end{eqnarray}
for $s=1,2,3$, and
\begin{eqnarray}\label{eq29}
\omega -\varepsilon\frac{(x_{0}-z_{s+2})+(x_{0}-z_{s})}{z_{s+2}-z_{s}}&\leq&
\frac{\varphi(z_{s+2})-\varphi(z_{s})}{z_{s+2}-z_{s}}\nonumber\\
&\leq&\omega+\varepsilon\frac{(x_{0}-z_{s+2})+(x_{0}-z_{s})}{z_{s+2}-z_{s}}
\end{eqnarray}
for $s=1,2.$\\

From conditions (a) and (b) of Lemma \ref{lemm4.1} on the other hand  it follows that
\begin{eqnarray}\label{eq30}
\underset{1\leq i\leq 4}{max}\Big\{\frac{x-z_{i}}{z_{2}-z_{1}},
 \frac{x_{0}-z_{i}}{z_{3}-z_{1}}, \frac{x_{0}-z_{i}}{z_{4}-z_{2}}, \frac{x_{0}-z_{i}}{z_{4}-z_{3}}
\Big\}\leq K_{1}
\end{eqnarray}
where the constant $K_{1}>0$ depends on $R_{1}$ and 
does not depend on $\varepsilon$.\\
We rewrite $Dist(z_1,z_2,z_3,z_4;T_{\varphi})$ in the following form:
$$
Dist(z_1,z_2,z_3,z_4;T_{\varphi})
=\frac{T_{\varphi}z_{2}-T_{\varphi}z_{1}}{z_{2}-z_{1}}\cdot
\frac{T_{\varphi}z_{4}-T_{\varphi}z_{3}}{z_{4}-z_{3}}\cdot
\frac{z_{3}-z_{1}}{T_{\varphi}z_{3}-T_{\varphi}z_{1}}\cdot
\frac{z_{4}-z_{2}}{T_{\varphi}z_{4}-T_{\varphi}z_{2}}.
$$
The inequalities (\ref{eq28})-(\ref{eq30}) then  imply the assertion of Lemma \ref{lemm4.1}.\\

\textbf{Proof of Lemma \ref{lemm4.2}}.  We consider the case $z_{2}=T_{f}^{-l}a_{b}$,\,\, $ T_{f}^{-p}c_{b}\in \left[z_{1}, z_{2} \right]$, \,\,  $0 \leq l,p\leq q_n$, the case  $z_{3}=T_{f}^{-l}a_{b},$\,\, $ T_{f}^{-p}c_{b}\in \left[z_{3}, z_{4} \right],$\,\,  $0 \leq l,p\leq q_n$, can be treated similarly.
Rewrite the distortion $Dist(z_{1},z_{2},z_{3},z_{4};T_{f}^{q_{n}})$ in the following form
\begin{eqnarray}\label{eq31}
&&Dist(z_{1},z_{2},z_{3},z_{4};T_{f}^{q_{n}})
=Dist (T_{f}^{l}z_{1},T_{f}^{l}z_{2}, T_{f}^{l}z_{3},T_{f}^{l}z_{4};T_{f})\times \notag\\
&\times& Dist(T_{f}^{p}z_{1},T_{f}^{p}z_{2}, T_{f}^{p}z_{3},T_{f}^{p}z_{4};T_{f})
\prod_{\substack{0\leq i<q_{n} \\ i\neq l,p }}Dist (T_{f}^{i}z_{1},T_{f}^{i}z_{2}, T_{f}^{i}z_{3},T_{f}^{i}z_{4};T_{f}).
\end{eqnarray}
By assumption only the two intervals $[T_{f}^{l}z_{1},T_{f}^{l}z_{2}]$ and  $[T_{f}^{p}z_{1},T_{f}^{p}z_{2}]$
contain the break points: namely $a_{b}=T_{f}^{l}z_{2}=a_{b}$, and $ c_{b}\in [T_{f}^{p}z_{1}, T_{f}^{p}z_{2} ]$ for some  $l,p\in [0,q_{n}).$

Using Lemma \ref{lemm3.2} and the definitions of the functions  $G(x)$,\,\,$F(x,t)$ we get 
\begin{eqnarray*}
Dist (T_{f}^{l}z_{1},T_{f}^{l}z_{2}, T_{f}^{l}z_{3},T_{f}^{l}z_{4};T_{f})&=&\frac{\sigma(a_{b})(1+\xi(l))}{\sigma (a_{b})+\xi(l)}+\chi_{1}=G(\xi(l)+\chi_{1},\\
Dist (T_{f}^{p}z_{1},T_{f}^{p}z_{2}, T_{f}^{p}z_{3},T_{f}^{p}z_{4};T_{f})&=&\frac{[\sigma(c_{b})+(1-\sigma(c_{b}))z(p)](1+\xi(p))}{\sigma(c_{b})+(1-\sigma(c_{b}))z(p)+\xi(p)}+\chi_{2}=\\
&=&F(\xi(p),z(p))+\chi_{2},
\end{eqnarray*}
with $|\chi_{j}|=|\chi_{j}(z_{1},z_{2},z_{3},z_{4})|\leq
K_1 \overset{z_4}{\underset{z_1}{\int}}|D^{2}f(y)|dy,$\,\,\,  $j=1,2.$\\
This together with (\ref{eq31}) imply the assertion of Lemma \ref{lemm4.2}.

\textbf{Proof of Lemma \ref{lemm4.3}}.  We prove only the bound for
$G(\xi(l)F(\xi(p),z(p))$. The one for $G(\eta(l)F(\eta(p),\vartheta(p))$ can be proved similarly. We start rewriting
$G(\xi(l)F(\xi(p),z(p))$ in the following form:
\begin{eqnarray}\label{eq32}
G(\xi(l)F(\xi(p),z(p))=\frac{\sigma(a_{b})(1+\xi(l))}{\sigma (a_{b})+\xi(l)}\cdot
\frac{[\sigma(c_{b})+(1-\sigma(c_{b}))z(p)](1+\xi(p)]}{\sigma(c_{b})+(1-\sigma(c_{b}))z(p)+\xi(p)}\nonumber =\\ 
=[\sigma(a_{b})\sigma(c_{b})+(1-\sigma(c_{b}))\sigma(a_{b})z(p)]
\times \big [\frac{(1+\xi(l))}{\sigma (a_{b})+\xi(l)}\nonumber\\ \cdot\frac{(1+\xi(p))}{\sigma(c_{b})+(1-\sigma(c_{b}))z(p)+\xi(p)}\big ] \equiv \Phi_{1}(z(p))\times \Phi_{2}(\xi(l),\xi(p),z(p)) 
\end{eqnarray}
where $z(p)\in [0,1])$ and  $\xi(l),\, \xi(p)>0.$

It is clear, that 
$\Phi_{1}(0)=\sigma(a_{b})\sigma(c_{b})$ and  
$ \Phi_{2}(\xi(l),\xi(p),z(p))$ tends to 1 as  $\xi(l), \xi(p)$ tend to $\infty.$
Recall that $\sigma{a_{b}}\sigma{c_{b}}\neq 1$ by assumption.\\
Next we discuss the conditions under which  the expression  $\Phi_{1}(z(p))\Phi_{2}(\xi(l),\xi(p),z(p))$ stays
away from 1.
Obviously
\begin{eqnarray}\label{eq57}
|\Phi_{1}\Phi_{2}-1|=|(\Phi_{1}-1)+\Phi_{1}(\Phi_{2}-1)|\geq||\Phi_{1}-1|-\Phi_{1}|\Phi_{2}-1|.
\end{eqnarray}
Using the bounds for $z(m)$ in (\ref{eq22}) we get
\begin{eqnarray*}
|\Phi_{1}-1|&=&|\sigma(a_{b})\sigma(c_{b})+(1-\sigma(c_{b}))\sigma(a_{b})z(p)-1|\\ 
&\geq& |\sigma(a_{b})\sigma(c_{b})-1|-|1-\sigma(c_{b})|\sigma(a_{b})z(p)\\
&\geq& |\sigma(a_{b})\sigma(c_{b})-1|-
|1-\sigma(c_{b})|\sigma(a_{b})e^{v}z(0).
\end{eqnarray*}
If next $z(0)$  fulfills the inequality
$$|\sigma(a_{b})\sigma(c_{b})-1|-
|1-\sigma(c_{b})|\sigma(a_{b})e^{v}z(0)\geq
 \frac{|\sigma(a_{b})\sigma(c_{b})-1|}{2},$$
and hence
$$z(0)\leq \frac{|\sigma(a_{b})\sigma(c_{b})-1|}{2e^{v}|\sigma(a_{b})\sigma(c_{b})-\sigma(a_{b})|},$$
 then we conclude that 
\begin{eqnarray}\label{eq34}
|\Phi_{1}-1| \geq \frac{|\sigma(a_{b})\sigma(c_{b})-1|}{2},\,\,\,\, if\,\,\,0\le z(0)\leq
\zeta_{0},
\end{eqnarray}
where
\begin{equation}\label{35}
\zeta_{0}:=min \left\lbrace \frac{|\sigma(a_{b})\sigma(c_{b})-1|}{2e^{v}|\sigma(a_{b})\sigma(c_{b})-\sigma(a_{b})|}, 1\right\rbrace .
\end{equation}

Next we determine, under which condition on  $\xi(0)$ the inequality:
\begin{eqnarray}\label{eq36}
\Phi_{1}|\Phi_{2}-1|\le \frac{|\sigma(a_{b})\sigma(c_{b})-1|}{4}
\end{eqnarray}
holds true. Obviously, for  $z(p)\in [0,1].$ one has 
$\Phi_{1}(z(p))\leq max\left\lbrace \sigma(a_{b})\sigma(c_{b}),\sigma(c_{b})\right\rbrace:=  m_{\sigma} ,$
 for  $z(p)\in [0,1].$
Inequality (\ref{eq36}) then follows, if 
\begin{eqnarray}\label{eq37}
|\Phi_{2}-1| \le \frac{|\sigma(a_{b})\sigma(c_{b})-1|}{4m_{\sigma}}.
\end{eqnarray}
Now, if $\xi(l)$ and $\xi(p)$ are sufficiently large, then, since
\begin{eqnarray}\label{eq38}
\Phi_{2}-1=\frac{(1+\xi(l))}{\sigma (a_{b})+\xi(l)}\cdot\frac{(1+\xi(p))}{\sigma(c_{b})+(1-\sigma(c_{b}))z(p)+\xi(p)}-1,
\end{eqnarray}
 the right hand side of (\ref{eq38}) behaves like 
$(1+O(\frac{1}{\xi(l)}))(1+O(\frac{1}{\xi(p)}))-1$, 
which can be bounded by 
$R_{6}\left( \frac{1}{\xi(l)}+\frac{1}{\xi(p)}\right) $
for some  constant $R_{6}>1$  not depending on  $\xi(l)$ and  $\xi(p).$
On the other hand, according to relations (\ref{eq22}),  $\xi(m)$ is for $ m\in (0,q_{n}]$  comparable with $\xi(0)$, and hence
\begin{eqnarray}\label{eq39}
|\Phi_{2}-1|\leq R_{6}\left( \frac{1}{\xi(l)}+\frac{1}{\xi(p)}\right) \leq 2R_{6}e^{v}\frac{1}{\xi(0)}.
\end{eqnarray}
Hence if 
$$2R_{6}e^{v}\frac{1}{\xi(0)}\leq\frac{|\sigma(a_{b})\sigma(c_{b})-1|}{4m_{\sigma}}$$
respectively
\begin{eqnarray}\label{eq40}
 \xi(0)\geq \frac{4R_{6}e^{v}m_{\sigma}}{|\sigma(a_{b})\sigma(c_{b})-1|},
\end{eqnarray}
then  inequality (\ref{eq37}) holds true. Finally, we define the constant $C_{0}$ by
\begin{equation}\label{eq41}
C_{0}:=\max \left\lbrace \frac{4R_{6}e^{v}m_{\sigma}}{|\sigma(a_{b})\sigma(c_{b})-1|},1\right\rbrace . 
\end{equation}
From (\ref{eq34})-(\ref{eq41})  the assertion of Lemma \ref{lemm4.3} then follows  immediately.\\

{\textbf{Proof of Lemma \ref{lemm4.4}}}. Let $D\varphi(x_{0})=\omega >0.$ Fix $n\geq1$. W.l.o.g. we consider the case  $n $ odd, the case $n$ even can be deduced from the odd case by reversing the orientation of the circle. From the structure of the dynamical partition $\xi_n(x_{0})$ it follows, that both preimages $\overline{a}_{b}$ , $\overline{c}_{b}$ are in the interval  $[T_{f}^{q_{n}}x_{0},T_{f}^{q_{n-1}}x_{0}]$ with
$\overline {a}_{b}=T_{f}^{-l}{a}_{b}$, $\overline{c}_{b}=T_{f}^{-p}{c}_{b}$ for some $0\leq l,p\leq q_{n}$.
Take the point $\overline{a}_{b}$ and consider its neighbourhood $[T_{f}^{-q_{n-1}}\overline{a}_{b},T_{f}^{q_{n-1}}\overline{a}_{b}] $. By Corollary \ref{cor2.2}
for any   $a,b\in S^{1}$  all the intervals $[a,b]$, $T_{f}^{q_{n}}[a,b]$, $T_{f}^{-q_{n}}[a,b]$ are $e^{v}$- comparable. Since
 $\overline{a}_{b}\in [T_{f}^{q_{n}}x_{0},T_{f}^{q_{n-1}}x_{0}] $,
it can easily be shown, that the  pairs of intervals 
$([T_{f}^{-q_{n-1}}x_{0},x_{0}], [T_{f}^{-q_{n-1}}\overline{a}_{b},\overline{a}_{b}])$,
$([x_{0},T_{f}^{q_{n-1}}x_{0}], [\overline{a}_{b},T_{f}^{q_{n-1}}\overline{a}_{b}]) $ and 
$([T_{f}^{-q_{n-1}}x_{0},T_{f}^{q_{n-1}}x_{0}], [T_{f}^{-q_{n-1}}\overline{a}_{b},T_{f}^{q_{n-1}}\overline{a}_{b}]) $
are $e^{v}$- comparable.

Let $\tau_{0}$ be the middle point of the interval $[T{f}^{-q_{n-1}}\overline{a}_{b},\overline{a}_{b} ].$
Since
$[\overline{a}_{b},T_{f}^{q_{n-1}}\tau_{0}]=\\ T_{f}^{q_{n-1}}[T_{f}^{-q_{n-1}}\overline{a}_{b},\tau_{0}]$ and  $l([T_{f}^{-q_{n-1}}\overline{a}_{b},\tau_{0}])=l([\tau_{0},\overline{a}_{b}])$
we conclude, that the intervals $[\tau_{0},\overline{a}_{b}]$ and  $[\overline{a}_{b},T_{f}^{q_{n-1}}\tau_{0}]$
are $e^{v}$- comparable (see figure $1$).
\begin{figure}
\centering
\includegraphics[width=14cm]{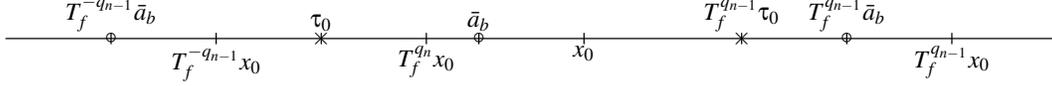}
\caption{Comparison of the intervals $[x_{0},T_{f}^{q_{n-1}} x_0 ]$
  and  $[\bar{a}_{b},T_{f}^{q_{n-1}}\bar{a}_{b} ]$}
\end{figure}

\begin{figure}
\centering
\includegraphics[width=14cm]{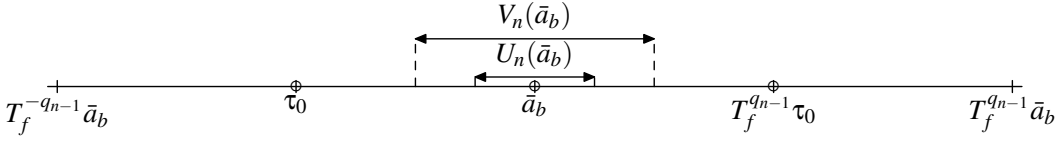}
\caption{The neighbourhoods $U_{n}(\bar{a}_{b})$ and $V_{n}(a_{b})$ are
 comparable with $[T_{f}^{-q_{n-1}} \bar{a}_{b},T_{f}^{q_{n-1}}
\bar{a}_{b}]$. $\tau_0$ is the middle point of the interval $[T_{f}^{-q_{n-1}}\bar{a}_{b},\bar{a}_{b} ]$}
\end{figure}
Set 
\begin{equation}
d_{n}:=
\frac{1}{2} min\left\lbrace  l([\overline{a}_{b},T_{f}^{q_{n-1}}\overline{a}_{b}]),l([T_{f}^{-q_{n-1}}\overline{a}_{b},\overline{a}_{b}]) \right\rbrace. 
\end{equation}
Using this and Corollary \ref{cor2.2} we get
\begin{equation}\label{eq43}
e^{-v}\frac{1}{2}  l([\overline{a}_b,T_f^{q_{n-1}}\overline{a}_b])\leq d_n\leq e^v\frac{1}{2}\,l([\overline{a}_b,T_f^{q_{n-1}}\overline{a}_b]).
\end{equation}

Notice, that the interval $[\tau_{0},T_{f}^{q_{n-1}}\tau_{0}]$  is one of the two generators of the  partition $\xi_{n}(\tau_{0}).$  Hence the intervals 
$T_{f}^{i}[\tau_{0},T_{f}^{q_{n-1}}\tau_{0}],$\,\,$i\in [0,q_{n})$\,\,
cover  the break point  ${a}_{b}$ only once. Using the constants $C_{0}$ and $\zeta_{0}$   in Lemma \ref{lemm4.3}  we define two neighbourhoods(see figure 2)
of the point $\overline{a}_{b}$:
$$V_{n}(\overline{a}_{b})=(\overline{a}_{b}-\frac{1}{2}e^{-v}C_{0}^{-1}d_{n},\,\,\overline{a}_{b}+\frac{1}{2}e^{-v}C_{0}^{-1}d_{n}),$$
$$U_{n}(\overline{a}_{b})= [\overline{a}_{b}-\frac{1}{2}\zeta_{0}l(V_{n}(\overline{a}_{b}),\,\,\overline{a}_{b}+\frac{1}{2}\zeta_{0}l(V_{n}(\overline{a}_{b})].$$
It is clear that $U_{n}(\overline{a}_{b})\subset V_{n}(\overline{a}_{b})\subset [\tau_{0},T_{f}^{q_{n-1}}\tau_{0}].$ 
The   construction of  the intervals $[z_{s},z_{s+1}]$ will depend of the location of 
 $ \overline{c}_{b}$ in the interval $V_{n}(\overline{a}_{b}). $ 
There are two  possibilities to consider:
\begin{eqnarray}\label{eq44}
{either}\,\,\, \overline{c}_{b}\notin U_{n}(\overline{a}_{b}), i.e.\,\, \overline{c}_b\in[T_f^{q_n}\tau_0,T_f^{q_{n-1}}\tau_0]\setminus U_n(\overline{a}_b)\,\,\, or\,\,\overline{c}_{b}\in U_{n}(\overline{a}_{b})).
\end{eqnarray}

Consider the  first case, when  $ \overline {c}_{b}\in V_{n}(\overline{a}_{b})\setminus U_{n}(\overline{a}_{b})).$ 
In this case we set
$$z_{2}=\overline{a}_{b}, z_{3}=\overline{a}_{b}+\frac{1}{4}l(U_{n}(\overline{a}_{b})),z_{4}=\overline{a}_{b}+\frac{1}{2}l(U_{n}(\overline{a}_{b}))\,\,and
\,\,z_{1}=\overline{a}_{b}-\frac{1}{4}l(U_{n}(\overline{a}_{b})).$$

It is easy to see, that the interval  $[z_{1},z_{4}]$ is a subset of $[\tau_{0},T_{f}^{q_{n-1}}\tau_{0}]$ and it does  not contain the break point $\overline{c}_{b}.$
Next we check, that the intervals $[z_{s},z_{s+1}]$,\,\, $s=1,2,3$  satisfy  properties 1)-3) in Lemma \ref{lemm4.4}. The interval $[z_1,z_4]$ is $q_n$-small, because $[z_1,z_4]\subset [\tau_0,T_f^{q{n-1}}\tau_0]$ which is one of the generators of the dynamical partition $\xi_n(\tau_0)$.
By construction, the length of $[z_{1},z_{4}]$ is equal to $\frac{3}{4}e^{-v}C_{0}^{-1}\zeta_{0}d_{n},$ but  $d_{n}$ is half the length   of one of the intervals 
 $[T_{f}^{-q_{n-1}}\overline{a}_{b},\overline{a}_{b}]$ or
$[\overline{a}_{b},T_{f}^{q_{n-1}}\overline{a}_{b}] .$  Consequently, the length of $[z_{1},z_{4}]$ is 
$4e^{v}C_{0}\zeta_{0}^{-1}$- comparable with $[x_{0},T_{f}^{q_{n-1}}x_{0}] $. Next we check, that the assumptions of Lemma \ref{lemm4.1} hold for both intervals $[z_{s},z_{s+1}]$ and  $[T_{f}^{q_{n}}z_{s},T_{f}^{q_{n}}z_{s+1}]$. Note first, that  the lengths of the intervals $[z_{s},z_{s+1}]$, $s=1,2,3 $ are  equal to $\frac{1}{4}e^{-v}C_{0}^{-1}\zeta_{0}d_{n},$ and 
that the  intervals $[z_{s},z_{s+1}]$ and 
$T_{f}^{q_{n-1}}[z_{s},z_{s+1}]$ are $e^{v}$- comparable for $s=1,2,3 .$ Hence  assumption a) of Lemma \ref{lemm4.1} holds true for these intervals with constant $e^{v}.$

Next we check assumption b) of Lemma \ref{lemm4.1}. It is easy to see that for all $i=1,2,3,4$
\begin{eqnarray}\label{eq45}
|x_{0}-z_{i}|\le |x_{0}-z_{2}|+|z_{4}-z_{1}|=|x_{0}-z_{2}|+d_{n},\nonumber\\ 
|x_{0}-T_{f}^{q_{n}}z_{i}|\le |x_{0}-z_{2}|+|z_{2}-T_{f}^{q_{n}}z_{2}|+|T_{f}^{q_{n}}z_{4}-T_{f}^{q_{n}}z_{1}|.
\end{eqnarray}
The point $z_{2}$ belongs to $[T_{f}^{q_{n}}x_{0},T_{f}^{q_{n-1}}x_{0}]\subset[T_{f}^{-q_{n-1}}x_{0},T_{f}^{q_{n-1}}x_{0}]$, 
which is $e^{v}$ comparable with   $[T_{f}^{-q_{n-1}}\overline{a}_{b},T_{f}^{q_{n-1}}\overline{a}_{b}]).$ 
But the length of this last interval is $4e^{v}$-comparable with $d_{n}.$ In complete analogy we can estimate  the second expression in (\ref{eq45}) by  $12e^{v}C_{0}\zeta_{0}^{-1}d_{n}$.\\ 
By assumption, the intervals $T_{f}^{i}[z_{1},z_{1}]$ do  not cover the break point $c_{b},$ but they
cover  the  point $a_{b}$ exactly  once with $z_{2}=T_{f}^{-l}a_{b}.$

Next consider the second case, when  $ \overline {c}_{b}\in U_{n}(\overline{a}_{b}).$ 
 There are again two possibilities:
If \,\, $ \overline {c}_{b}\in [\overline{a}_{b}-\frac{1}{2}\zeta_{0}l(V_{n}(\overline{a}_{b})),\,\overline{a}_{b}],$
 we set
$$z_{1}=\overline{a}_{b}-\frac{1}{2}l(V_{n}(\overline{a}_{b})),z_{2}=\overline{a}_{b}, z_{3}=\overline{a}_{b}+\frac{1}{2}C_{0}l(V_{n}(\overline{a}_{b})),z_{4}=\overline{a}_{b}+C_{0}l(V_{n}(\overline{a}_{b})).
$$
Then  the length of the first interval $[z_{1},z_{2}]$ is  
$\frac{1}{2}e^{-v}C_{0}^{-1}d_{n}$ and of the  other ones it is
  equal to $\frac{1}{2}e^{-v}d_{n}$.
Hence   the lengths of these intervals are $2e^{v}C_{0}$-comparable with $d_{n}$. 
The first two statements of Lemma \ref{lemm4.4} for these intervals can be checked in complete analogy to the first case in (\ref{eq44}).

Next we show, that in the present case the intervals\\ $\left\lbrace T_{f}^{i}[z_{1},z_{2}],\,\,T_{f}^{i}[z_{2},z_{3}],\,\, T_{f}^{i}[z_{3},z_{4}],\,\, 0\leq i \leq q_{n} \right\rbrace $\,  cover both break 
points $a_{b}$, $c_{b}$ regularly with constants $C_{0}$ and $\zeta_{0}$.
By construction, these intervals cover both break points exactly once. Moreover we have
 $z_{2}=\overline{a}_{b}$ and $ \overline {c}_{b}\in [z_{1},z_{2}].$
It is easy to see, that 
$\xi(0)=\frac{z_{3}-z_{2}}{z_{2}-z_{1}}=C_{0}.$ Since $ \overline {c}_{b}\in [\overline{a}_{b}-\frac{1}{2}\zeta_{0}l(V_{n}(\tau_{0})),\,\overline{a}_{b}]$,      we find that
$z(0)=\frac{z_{2}-\overline {c}_{b}}{z_{2}-z_{1}}\le \zeta_{0}.$
So the intervals $[z_{s},z_{s+1}]$,\,\,  $s=1,2,3$ satisfy the statements of Lemma \ref{lemm4.4}.

Consider finally the case, when the break point $\overline{c}_{b}$ is  in the interval $ [ \overline{a}_{b},\overline{a}_{b}+\frac{1}{2}\zeta_{0}l(V_{n}(\tau_{0})]$.
In this case we set 
$$z_{1}=\overline{a}_{b}-C_{0}l(V_{n}(\overline{a}_{b})), z_{2}=\overline{a}_{b}-\frac{1}{2}C_{0}l(V_{n}(\overline{a}_{b})), z_{3}=\overline{a}_{b},  z_{4}=\overline{a}_{b}+\frac{1}{2}l(V_{n}(\overline{a}_{b})). 
$$
The proof of Lemma \ref{lemm4.4} for these intervals $[z_{s},z_{s+1}]$,\, $s=1,2,3$ 
proceeds now exactly as in the  previous case. This concludes the proof of
Lemma \ref{lemm4.4}.\\

\textbf{Proof of Lemma \ref{lemm4.5}}. Assume, that the circle homeomorphism $T_{f}$ satisfies the
assumptions of Theorem \ref{DLM} and the intervals $ [z_{s},z_{s+1}]$, $s=1,2,3$ satisfy  Lemma \ref{lemm4.4}. 
Consider first the case when the intervals 
$\left\lbrace T_{f}^{i}[z_{1},z_{2}],\,\,T_{f}^{i}[z_{2},z_{3}]\,
 T_{f}^{i}[z_{3},z_{4}],\,\,\,0\leq i \leq q_{n}-1 \right\rbrace $\, 
 cover both break 
points $a_{b}$, $c_{b}$ regularly with constants $C_{0}$ and
$\zeta_{0}.$ Suppose that
$z_{2}=\overline{a}_{b}=T_{f}^{-l}{a}_{b}  $ and  $\overline{c}_{b}=T_{f}^{-p}{c}_{b},$ for some $0\leq l,p\leq q_{n}$
and
\begin{eqnarray}\label{eq46}
\frac{z_{3}-z_{2}}{z_{2}-z_{1}}\le C_{0},\,\,\ 0\le \frac{z_{3}-\overline{c}_{b}}{z_{2}-z_{1}}\le \zeta_{0}.\,\,
\end{eqnarray}
Lemma \ref{lemm4.2} shows that
 \begin{eqnarray}\label{eq47}
 Dist(z_{1},z_{2},z_{3},z_{4};T_{f}^{q_{n}})
=[G(\xi(l)+\chi_{1}][F(\xi(p),z(p))+\chi_{2}]\times \nonumber\\
\times\prod_{\substack{0\leq i<q_{n} \\ i\neq l,p }}Dist (T_{f}^{i}z_{1},T_{f}^{i}z_{2}, T_{f}^{i}z_{3},T_{f}^{i}z_{4};T_{f})
\end{eqnarray}
if  $z_{2}=\overline{a}_{b},$ and $\overline{c}_{b}\in \left[z_{1}, z_{2} \right] $  where
\begin{eqnarray}\label{eq48}
|\chi_{j}|=|\chi_{j}(z_{1},z_{2},z_{3},z_{4})|\leq
K_{1} \overset{z_4}{\underset{z_1}{\int}}|D^{2}f(y)|dy,\,\, j=1,2.
\end{eqnarray}
with some constant $K_{1}$ not depending on $n$ and $\varepsilon.$
Next we estimate the right hand side in equation \ref{eq47}. Fix some $\varepsilon>0.$ By assumption, the second  derivative $D^{2}f$ of the  lift $f$ belongs to $L^{1}(S^{1},dl).$ Hence it can be written 
in the form $D^{2}f(x)=g_{\varepsilon}(x)+\theta_{\varepsilon}(x)$ with
$g_{\varepsilon}$  a continuous function on $S^{1}$ and 
$\parallel\theta_{\varepsilon}(x)\parallel_{L^{1}}< \varepsilon.$
By  assumption, among the intervals $T_{f}^{i}[z_{s},z_{s+1}]$, $0\leq i\leq q_{n}$, only the intervals 
$T_{f}^{l}[z_{1},z_{4}]$ and $T_{f}^{p}[z_{1},z_{4}]$ contain the  break points ${a}_{b}$ respectively ${c}_{b}$.

Obviously 
\begin{displaymath}
|\prod_{\substack{0\leq i<q_{n} \\ i\neq l,p }}Dist(T_{f}^{i}z_{1},T_{f}^{i}z_{2},T_{f}^{i}z_{3},T_{f}^{i}z_{4};
T_{f}) -1|= 
\end{displaymath}

\begin{eqnarray}\label{eq49}
=|exp\{\overset{q_{n}-1}{\underset{i=0,\;i\neq l,b}
{\sum}}
\text{log}(1+(Dist(T_{f}^{i}z_{1},T_{f}^{i}z_{2},T_{f}^{i}z_{3},T_{f}^{i}z_{4};
T_{f})-1))\}-1|
\end{eqnarray}

Next applying Theorem \ref{theo3.1} we obtain 
\begin{eqnarray}\label{eq50}
|Dist(T_{f}^{i}z_{1},T_{f}^{i}z_{2},T_{f}^{i}z_{3},T_{f}^{i}z_{4};
T_{f})-1|\le C_{1}|T_{f}^{i}z_{4}-T_{f}^{i}z_{1}|\times\nonumber\\
\overset{}{\underset{x,t\in
[T_{f}^{i}z_{1},T_{f}^{i}z_{4}]}
{\max}|g_{\varepsilon}(x)-g_{\varepsilon}(t)|}
+C_{1}\overset{T_{f}^{i}z_{4}}{\underset{T_{f}^{i}z_{1}}{\int}}|\theta_{\varepsilon}(y)|dy+C_{1}
\Big(\overset{T_{f}^{i}z_{4}}{\underset{T_{f}^{i}z_{1}}{\int}}|D^{2}f(y)|dy\Big)^{2}
\end{eqnarray}
where the constant $C_{1}>0$ depends only on the function $f.$

For  $D^{2}f\in L^{1}([0,1])$  the function
$
\Psi(x)=\overset{x}{\underset{0}{\int}}<D^{2}f(y)|dy
$
is absolutely  continuous on $S^{1}$.  Note that
the  functions $\Psi(x)$
and  $g_{\varepsilon}(x)$
are uniformly continuous on $ S^{1}$ because they are continuous on $S^{1}$. 
Hence there exists
$\delta_{0}=\delta_{0}(\varepsilon)>0$ such that for any $x,t\in S^{1}$ with $|x-t|<\delta_{0},$
the inequalities 
\begin{eqnarray}\label{eq51}
|\Psi(x)-\Psi(t)|<\varepsilon,\,\,
|g_{\varepsilon}(x)-g_{\varepsilon}(t)|<\varepsilon,
\end{eqnarray}
hold true.\\
 By assumption, the interval  $[z_{1},z_{4}]$ is $q_{n}-$small. Hence by
Corollary \ref{cor2.2} for all $   0\leq i\leq q_{n}$ we have
$
l(T_{f}^{i}[z_{1},z_{4}])\leq const\,\lambda^{n},\ 0 \leq i
\leq q_n-1,
$\,\,
with $\lambda=(1+e^{-v})^{\frac{-1}{2}}<1.$ Consequently there exists a number
$N_{0}=N_{0}(\delta)>0$ such that for  $n>N_{0}$ and all $0 \leq i\leq q_{n}$ one has 
$
l(T_{f}^{i}[z_{1},z_{4}])\leq \delta_{0}.
$ 
This together with (\ref{eq51}) implies  that for   $n>N_{0}$  the following
inequalities
\begin{eqnarray}\label{eq52}
|\Psi(x)-\Psi(y)|<\varepsilon,\,\,\,\,
|g_{\varepsilon}(x)-g_{\varepsilon}(y)|<\varepsilon,
\end{eqnarray}
hold for  all $x,y\in [T_{f}^{i}z_{1},T_{f}^{i}z_{4}],$ and all $0 \leq i\leq q_{n}$.

On the other hand, since the interval $[z_{1},z_{4}]$ is $q_{n}-$ small, the intervals 
$T_{f}^{i}[z_{1},z_{4}]$, $0 \leq i\leq q_{n}$, 
  are non intersecting and trivially 
\begin{displaymath}
\sum_{i=0}^{q_{n}}l(T_{f}^{i}[z_{1},z_{4}])\le1.
\end{displaymath}
Since $\parallel\theta_{\varepsilon}\parallel_{L^{1}}< \varepsilon$,
we find, using relations  (\ref{eq49})-(\ref{eq52}), 
\begin{displaymath}
|\prod_{\substack{0\leq i<q_{n} \\ i\neq l,p }}Dist(T_{f}^{i}z_{1},T_{f}^{i}z_{2},T_{f}^{i}z_{3},T_{f}^{i}z_{4};
T_{f})-1| \le
\end{displaymath}
 \begin{displaymath}\le 
C_{2}\sum_{i=0}^{q_{n}-1}
|Dist(T_{f}^{i}z_{1},T_{f}^{i}z_{2},T_{f}^{i}z_{3},T_{f}^{i}z_{4};
T_{f})-1|\le C_{2}\varepsilon \sum_{i=0}^{q_{n}-1}
|T_{f}^{i}z_{4}-T_{f}^{i}z_{1}|+
\end{displaymath}\begin{displaymath}
+C_{2} \sum_{i=0}^{q_{n}-1}\overset{T_{f}^{i}(z_4)}
{\underset{T_{f}^{i}(z_1)}{\int}}|\theta_{\varepsilon}(y)|dy 
+C_{2} \sum_{i=0}^{q_{n}-1}
|\Psi(T_{f}^{i}z_{4})-\Psi(T_{f}^{i}z_{1})|\overset{T_{f}^{i}(z_4)}
{\underset{T_{f}^{i}(z_1)}{\int}}|D^{2}f(y)|dy\le 
\end{displaymath}
\begin{eqnarray}\label{eq53}
\,\,\,\,\,\,\,\,\le  C_{2}\left\{2\varepsilon+\overset{1}
{\underset{0}{\int}}|\theta_{\varepsilon}(y)|dy+\varepsilon\overset{1}
{\underset{0}{\int}}|D^{2}f_{1}(y)|dy\right\}
\le  C_{2}(3+||D^{2}f||_{L^{1}})\varepsilon 
\end{eqnarray}
where the constant depends only on $f.$.
 
Next we estimate the expression
$(G(\xi(l))+\chi_{1})(F(\xi(p),z(p))+\chi_{2})$.
where  $|\chi_{i}|$ is bounded above by
$K_{1} \overset{z_4}{\underset{z_1}{\int}}|D^{2}f(y)|dy$,\,\, for $j=1,2$ 
with some constant $K_{1}$ not depending on $n$ and $\varepsilon$. For $n>N_{0}$ we have
 $\overset{z_4}{\underset{z_1}{\int}}|D^{2}f(y)|dy=\Psi(z_{4})-\Psi(z_{1})<\varepsilon.$
Since  $G(x)$ \,\, is bounded for $x>0$  and $F(x,t),$ is bounded for $x>0$ and $1\leq t \leq 1$,
it is hence sufficient to estimate the term $G(\xi(l))F(\xi(p),z(p))$ in the above product.
By assumption, the intervals
$\left\lbrace T_{f}^{i}[z_{1},z_{2}],\,\,T_{f}^{i}[z_{2},z_{3}]\,
 T_{f}^{i}[z_{3},z_{4}],\,\,\,0\leq i \leq q_{n}-1 \right\rbrace $\, 
 cover both break 
points $a_{b}$, $c_{b}$ regularly with constants $C_{0}$ and
  $\zeta_{0}.$  Applying Lemma \ref{lemm4.3} we conclude
$$
|G(\xi(l))F(\xi(p),z(p))-1|\geq \frac{|\sigma(a_{b})\sigma(c_{b})-1|}{4}>0,
$$
since by assumption the product of the jump ratios of $Df$ at the break points is nontrivial i.e.   $\sigma(a_{b})\sigma(c_{b})\neq 1$. It then follows, that  for sufficiently large $n$ and small $\varepsilon$ the inequality
$$|[G(\xi(l))+\chi_{1}][F(\xi(p),z(p))+\chi_{2}]-1|\geq \frac{|\sigma(a_{b})\sigma(c_{b})-1|}{8}$$ 
holds true. This together with (\ref{eq44}) and (\ref{eq53}) imply 
the assertion of Lemma \ref{lemm4.5} in the  case of a regular covering of the two break points.\\
W.l.o.g. we assume next that the intervals $\left\lbrace T_{f}^{i}[z_{1},z_{2}],\,\,T_{f}^{i}[z_{2},z_{3}]\,
 T_{f}^{i}[z_{3},z_{4}],\,\,\,0\leq i \leq q_{n}-1 \right\rbrace $\, 
cover only the break point $a_{b}$ with $z_{2}=\overline{a}_{b}=T_{f}^{-l}a_{b}$ 
for some $0\leq l\leq q_{n} $
and  satisfy properties 1), 2)  of Lemma \ref{lemm4.4}.

We write $Dist(z_{1},z_{2},z_{3},z_{4};T_{f}^{q_{n}})$ in the following form 
 \begin{eqnarray}\label{eq54}
 Dist(z_{1},z_{2},z_{3},z_{4};T_{f}^{q_{n}})
=Dist (T_{f}^{l}z_{1},T_{f}^{l}z_{2}, T_{f}^{l}z_{3},T_{f}^{l}z_{4};T_{f})\times \nonumber\\
\times\prod_{\substack{0\leq i<q_{n} \\ i\neq l }}Dist (T_{f}^{i}z_{1},T_{f}^{i}z_{2}, T_{f}^{i}z_{3},T_{f}^{i}z_{4};T_{f}).
\end{eqnarray}
For sufficiently large $n$ and any $\varepsilon >0$ the product over $i\neq l, p$ in (\ref{eq54}) takes it value in an
$\varepsilon- $neighbourhood of 1.
By assumption, only the interval $T_{f}^{l}[z_{1},z_{4}]$ contains the break point $a_{b}$ 
with $a_{b}=T_{f}^{l}z_{2}.$ Using  Lemma \ref{lemm3.2} we find
\begin{eqnarray}\label{eq55}
|Dist (T_{f}^{l}z_{1},T_{f}^{l}z_{2}, T_{f}^{l}z_{3},T_{f}^{l}z_{4};T_{f})- \frac{\sigma(a_{b})(1+\xi(l))}{\sigma(a_{b})+\xi(l)}|\nonumber\\
\le  K_1 \overset{z_4}{\underset{z_1}{\int}}|D^{2}f(y)|dy,
\end{eqnarray}
where the constant $K_1>0$ depends only on the function $f$ and where $\xi(l)=\frac{T_{f}^{l}z_{3}-T_{f}^{l}z_{2}}{T_{f}^{l}z_{2}-T_{f}^{l}z_{1}}.$
Obviously
$$\frac{\sigma(a_{b})(1+\xi(l))}{\sigma(a_{b})+\xi(l)}-1=\frac{(\sigma(a_{b})-1)\xi(l)}{\sigma(a_{b})+\xi(l)}.$$
Using this and the inequalities $R_{2}^{-1}\le \xi(l)\le R_{2}$, following from (\ref{eq22}), and the comparability of the intervals  $[z_{s}, z_{s+1}]$ for $s=1,2,3$, we obtain 
 $$R_{3}^{-1} \le \frac{(\sigma(a_{b})-1)\xi(l)}{\sigma(a_{b})+\xi(l)}\le R_{3}$$
where the constants $R_{i}>0,\,\,i=1,2$  depend only on $f.$ Finally we obtain 
\begin{eqnarray}\label{eq56}
|Dist (T_{f}^{l}z_{1},T_{f}^{l}z_{2}, T_{f}^{l}z_{3},T_{f}^{l}z_{4};T_{f})-1|\geq const>0,
\end{eqnarray}
where the constant again depends only on $f$.
 As in the first case, the inequality \\
$$ |\prod_{\substack {0\leq i<q_{n} \\ i\neq l }}Dist (T_{f}^{i}z_{1},T_{f}^{i}z_{2}, T_{f}^{i}z_{3},T_{f}^{i}z_{4};T_{f})-1|\le const\,\, \varepsilon.$$
holds true also in the present case.
This together with (\ref{eq54}) and (\ref{eq56}) proves Lemma \ref{lemm4.5}.

\textbf{Proof of Theorem \ref{DLM1}}. The idea of the  proof of Theorem \ref{DLM1} is completely similar to
the one of Theorem \ref{DLM}. Hence we will only give the construction of the intervals $[z_{s}, z_{s+1}]$,\,\,\, $s=1,2,3$,  which play the key role in the proof.\\
Consider the $n-$th dynamical partition $\xi_{n}(\overline{a}_{b})$ of the preimage $\overline{a}_{b}$ of the break point 
$a_{b}$ in the interval $[T_{f}^{q_{n}}x_{0}, T_{f}^{q_{n}-1}x_{0}]$ around the point $x_{0}$, 
at which there exists a positive derivative $DT_{\varphi}(x_0)$ of the 
conjugating homeomorphism $T_{\varphi}(x).$  Since the rotation number $\rho_{f}$ is irrational of bounded type there exists a subsequence $\left\lbrace n_{k},k=1,2,...\right\rbrace \in\mathbb{N} $ such that for every
$n_{k}$ the interval $[\overline{a}_{b}, T_{f}^{q_{n{k}-1}}\overline{a}_{b}]$ respectively  the interval $[T_{f}^{q_{n{k}}}\overline{a}_{b}, \overline{a}_{b}]$ contains the point 
$\overline{c}_{b}=T_{f}^{-p}c_{b}$ for some $p\in [0,q_{n_{k}})$ and such that furthermore $K_{3}^{-1}\le \frac{\overline{c}_{b}-\overline{a}_{b}}{T_{f}^{q_{n_{k}-1}}\overline{c}_{b}-\overline{a}_{b}}\le K_{3}$
respectively  $K_{3}^{-1}\le \frac{\overline{a}_{b}-\overline{c}_{b}}{\overline{c}_{b}-T_{f}^{q_{n_{k}}}\overline{a}_{b}}\le K_{3}$ holds, where the constant $K_{3}$ depends only on $f.$
Set $d_{n_{k}}=min\left\lbrace|\overline{c}_{b}-\overline{a}_{b}|, |T_{f}^{q_{n_{k}-1}}\overline{a}_{b}-\overline{c}_{b}| \right\rbrace $ and  define
the points 
$$z_{2}=\overline{c}_{b}, z_{1}=\overline{c}_{b}-\frac{1}{2}d_{n_{k}}, z_{3}=\overline{c}_{b}+\frac{1}{2}d_{n_{k}}, z_{4}=\overline{c}_{b}+d_{n_{k}}.$$
As in the proof of Lemma \ref{lemm4.4} it  can be checked that the intervals $[z_{s}, z_{s+1}]$,\,\,
 $s=1,2,3$ then satisfy the statements of this Lemma.\\

{\bf{Acknowledgement}}\\
The work of Akhtam Dzhalilov during a stay at the University of Clausthal was supported by the German Research Council (DFG) under project  Ma 633/18-1.

\end{document}